\documentclass[11pt,reqno]{amsart}
\usepackage[T1]{fontenc}
\usepackage{amsmath,amssymb,amsthm,mathtools,mathrsfs}
\usepackage[textwidth=16cm,hcentering,top=2cm,bottom=2cm,headheight=10pt,headsep=14pt]{geometry}
\usepackage{needspace}
\usepackage{graphicx}
\usepackage{tikz}
\usepackage[hidelinks]{hyperref}
\usepackage{accsupp}
\hypersetup{
 pdflang={en-US},
 pdftitle={Rigidity of stationary black holes with small ADM angular momentum},
 pdfauthor={Alexandru F. Radu},
 pdfsubject={Stationary vacuum black hole rigidity},
 pdfkeywords={black hole rigidity, ADM angular momentum, Harrison transformation, maximal hypersurfaces, null geometry}
}
\allowdisplaybreaks[1]
\numberwithin{equation}{section}
\makeatletter
\renewcommand{\section}{\@startsection{section}{1}%
  \z@{.7\linespacing}{.5\linespacing}%
  {\normalfont\scshape\centering}}
\renewcommand{\subsection}{\@startsection{subsection}{2}%
  \z@{.5\linespacing}{-.5em}%
  {\normalfont\bfseries}}
\makeatother
\newtheorem{theorem}{Theorem}[section]
\newtheorem{proposition}[theorem]{Proposition}
\newtheorem{lemma}[theorem]{Lemma}
\theoremstyle{remark}

\newcommand{\Ric}{\operatorname{Ric}}
\newcommand{\g}{\mathbf g}
\newcommand{\T}{\mathbf T}
\newcommand{\K}{\mathbf K}
\newcommand{\Z}{\mathbf Z}
\newcommand{\D}{\mathbf D}
\newcommand{\R}{\mathbf R}
\newcommand{\M}{\mathbf M}
\newcommand{\F}{\mathbf F}
\newcommand{\Div}{\operatorname{div}}
\newcommand{\tr}{\operatorname{tr}}
\newcommand{\Lie}{\mathcal L}
\newcommand{\dd}{\,\mathrm d}
\newcommand{\gcirc}{\overset{\circ}{\gamma}}
\newcommand{\cF}{\mathcal F}
\newcommand{\eps}{\varepsilon}
\newcommand{\DOC}{\mathbf E}
\begin{document}
\flushbottom
\title[Rigidity with small ADM angular momentum]
{Rigidity of stationary black holes with small ADM angular momentum}
\author{Alexandru F. Radu}
\address{Simion Stoilow Institute of Mathematics, Romanian Academy,
Calea Grivitei Street, no.~21, 010702 Bucharest, Romania}
\email{alexandru.radu@imar.ro}
\subjclass[2020]{Primary 83C57; Secondary 83C20, 53C50}
\keywords{Black hole rigidity, stationary vacuum space-times,
ADM angular momentum, Harrison transformation, maximal hypersurfaces, null geometry}
\date{}

\begin{abstract}
We prove global rigidity of smooth asymptotically flat stationary
vacuum black holes with sufficiently small ADM angular momentum,
under global regularity, bifurcation, and maximal-hypersurface
assumptions, with a threshold depending on fixed geometric bounds.
This answers the question of Alexakis, Ionescu, and Klainerman
concerning replacement of their smallness assumption on the
stationary field at the bifurcation sphere by smallness of the
angular momentum at infinity. The main step
establishes Kerr rigidity near infinity for arbitrary angular
momentum. We transform the vacuum metric into an Einstein--Maxwell
family and compare null focusing at its limiting end with the
area--charge inequality on compact maximal hypersurfaces, constructed
by continuation through the family. Equality determines both the
transformed metric and its stationary field, from which the inverse
transformation recovers Kerr. Two Kerr identities then give the
horizon estimate required for the global conclusion.
\end{abstract}
\maketitle
\enlargethispage{2pt}
\tableofcontents

\section{Introduction}\label{sec:introduction}

The black hole uniqueness conjecture predicts that the exterior
of a regular stationary vacuum black hole belongs to the Kerr
family and is therefore determined by its mass and angular
momentum. Since an isolated black hole is expected to approach
a stationary regime as radiation escapes to infinity or crosses
the horizon, this conjecture would characterize its possible
asymptotic states. The question of whether such a regime is
reached belongs to the dynamical stability problem, which is
distinct from the stationary classification considered here.
See~\cite{IKreview} and~\cite{CCH}.

In the rotating case, the classical uniqueness argument first
uses analyticity to obtain the additional rotational symmetry
through Hawking's rigidity theorem~\cite{Hawking}, and then
identifies the resulting stationary-axisymmetric metric by the
results of Carter~\cite{Carter} and Robinson~\cite{Robinson}.
A precise global treatment is given by Chru\'sciel and
Costa~\cite{ChruscielCosta}. Although stationarity implies
analyticity where the stationary field is timelike~\cite{MullerHagen},
this conclusion is unavailable in the ergoregion, where that
field is spacelike, so the classical extension argument does
not apply to an arbitrary smooth stationary metric.

For smooth nonexpanding bifurcate horizons, Friedrich, R\'acz,
and Wald~\cite{FRW} construct a Killing field
normal to both horizon branches and vanishing at their
intersection, but their construction is confined to the domain
of dependence of the two branches. Alexakis, Ionescu, and
Klainerman~\cite{AIKlocal} extend this
field to a full neighbourhood of the intersection and, in the
rotating stationary case, construct a commuting rotational
field. The continuation of this local symmetry gives global
rigidity under suitable comparison assumptions with
Kerr~\cite{IKuniqueness,AIKnearKerr}, or under the condition
in the author's recent preprint~\cite{RaduNontrapping}
excluding inextendible null geodesics orthogonal to the stationary
field whose projections remain in a compact part of the space of
stationary orbits. The need for further information in
this continuation is illustrated by the local counterexamples
of Ionescu and Klainerman~\cite{IKextension}.

Alexakis, Ionescu, and Klainerman~\cite{AIKsmall} prove global
rigidity under a smallness assumption on the stationary field
at the bifurcation sphere, where the future and past horizons
meet, and ask whether this assumption can be replaced by
smallness of the Arnowitt--Deser--Misner (ADM) angular momentum.
We answer this question under the same global regularity,
bifurcation, and maximal-hypersurface assumptions, with a smallness
threshold depending on fixed bounds for the geometry.
Here a hypersurface is maximal when its mean curvature vanishes.

The main ingredient is an identification of the metric and
stationary field with Kerr near infinity, obtained without a
restriction on the angular momentum. It determines all higher
asymptotic multipole moments from the mass and angular momentum,
in the sense of Geroch and Hansen~\cite{GerochMoments,Hansen}.

\subsection{Assumptions and the main results}\label{subsec:assumptions}

Let $(\M,\g)$ be a connected, oriented, time-oriented smooth
four-dimensional spacetime of signature $(-,+,+,+)$ satisfying
$\R_{\alpha\beta}=0$, and let $\T$ be a smooth Killing field.
We follow the notation of \cite{AIKsmall}, writing $\D$ for the
Levi--Civita connection and $\R_{\alpha\beta}$ for the Ricci tensor,
and recall its global regularity and bifurcation assumptions below.
For the causal relations in these assumptions, $p\ll q$ means that
a future-directed timelike curve joins $p$ to $q$, while
$\mathcal I^\pm(A)$ denote the chronological future and past of
$A$. A set is achronal if no two of its points can be joined by a
timelike curve. We also write $E_a=\{x\in\mathbb R^3:|x|>a\}$.

\medskip\noindent
GR. (Global regularity.)
There is an embedded spacelike partial Cauchy hypersurface
$\Sigma^0$ and a diffeomorphism $\Phi_0:E_{1/2}\to\Sigma^0$.
Here acausal means that no causal curve joins distinct points of
the hypersurface. A partial Cauchy hypersurface is acausal and has
no edge. The absence of an edge means that, locally, timelike curves
crossing from its past side to its future side must meet it. In particular it is achronal and closed in the
ambient spacetime.
Put
\[
 \Sigma_1=\Phi_0(E_1),\qquad
 \mathcal S_0=\Phi_0(\{|x|=1\}),\qquad
 \overline{\Sigma}_1=\Sigma_1\cup\mathcal S_0.
\]
For sufficiently large $R_0$, the restriction of $\Phi_0$ to $E_{R_0}$
extends to an asymptotic stationary chart
\[
 \Phi_0:\mathbb R\times E_{R_0}\longrightarrow
 \M^{(\mathrm{end})},\qquad \T=\partial_t,
\]
whose zero-time section agrees with $\Sigma^0$.
In this chart, with $r=|x|$, the metric satisfies
\begin{align}
 \g_{00}&=-1+\frac{2M}{r}+O_6(r^{-2}),&
 \g_{ij}&=\delta_{ij}+O_6(r^{-1}),\label{eq:AF}\\
 \g_{0i}&=-2\epsilon_{ijk}S^j x^k r^{-3}+O_6(r^{-3}),\nonumber
\end{align}
Here $M>0$ is the ADM mass. Following \cite{AIKsmall},
we write $S=(S^1,S^2,S^3)$ for the ADM angular momentum vector
and $J=|S|$ for its magnitude.
The notation $O_m(r^{-a})$ includes the estimates
$|\partial^I w|\le C_Ir^{-a-|I|}$ for $|I|\le m$,
for a scalar function $w$, and componentwise for tensor fields.
Here $i,j,k$ range from $1$ to $3$
and $\epsilon_{123}=1$.

The domain of outer communications
\[
 \DOC=\mathcal I^-(\M^{(\mathrm{end})})
             \cap \mathcal I^+(\M^{(\mathrm{end})})
\]
is globally hyperbolic, meaning that it is causal and every causal diamond
$J^+(p)\cap J^-(q)$ is compact. Here $J^\pm$ allow causal curves,
including null curves, and causality excludes closed causal curves.
We also assume
\[
 \Sigma^0\cap \mathcal I^-(\M^{(\mathrm{end})})
 =\Sigma^0\cap \mathcal I^+(\M^{(\mathrm{end})})
 =\Sigma_1.
\]
The field $\T$ is nowhere zero on $\DOC$. Every stationary orbit
there is complete and intersects $\Sigma_1$ transversally.

\medskip\noindent
SBS. (Smooth bifurcation sphere.)
In a neighbourhood of $\mathcal S_0$, write
\[
 \mathcal H^+=\partial \mathcal I^-(\M^{(\mathrm{end})}),\qquad
 \mathcal H^-=\partial \mathcal I^+(\M^{(\mathrm{end})}).
\]
The hypersurfaces $\mathcal H^\pm$ are smooth, null, transverse,
and nonexpanding, with
\[
 \mathcal H^+\cap\mathcal H^-=\mathcal S_0,\qquad
 \T\in T\mathcal H^\pm\quad\hbox{on }\mathcal H^\pm,
 \qquad \T|_{\mathcal S_0}\not\equiv0.
\]
For a null generator $L$ and vectors $e_A$
tangent to a spacelike section of a horizon, its null second
fundamental form is $\chi_{AB}=\g(\D_{e_A}L,e_B)$.
Nonexpansion means $\tr\chi=0$.

\medskip
Let $T_0$ be the future unit normal to $\Sigma^0$, and set
\[
 h=\g|_{T\Sigma^0},\qquad
 k(Y,V)=-\g(\D_YT_0,V),\qquad Y,V\in T\Sigma^0.
\]
We assume the maximality condition
\[
 \operatorname{tr}_h k=0\quad\hbox{on }\Sigma_1.
\]
Its induced geometry extends smoothly to $\mathcal S_0$ in $\Sigma^0$.
Apart from the restriction $J<M^2$ in that reference, these
are the hypotheses of \cite{AIKsmall} before imposing smallness
of $\T$ on $\mathcal S_0$. We rescale to $M=1$ throughout the proof.

For the quantitative global statement we assume $J<M^2$ and use
the effective constant
$\overline A$ defined in~\cite{AIKsmall}.
There are constants $A_1$, $R_1\ge\max(R_0,A_1^4)$, and
$\eps_0>0$ with the following bounds. In the exterior chart,
$A_1^{-1}|v|^2\le h_{ij}v^iv^j\le A_1|v|^2$ on $\Phi_0(E_{3/4})$,
and the differentiated remainders in \eqref{eq:AF} have coefficients
bounded by $A_1$. The exterior coordinates extend to a spacetime
neighbourhood through a map
\[
 \Phi_1:(-\eps_0,\eps_0)\times E_{1-\eps_0}
          \longrightarrow\widetilde{\M}.
\]
This is the chart constructed in that reference by interpolating
between the unit normal of $\Sigma^0$ and $\T$. It agrees with the
stationary chart for $r>2R_1$. The construction also gives
\begin{equation}\label{eq:coordinate-time-bound}
 \sum_{i=1}^3|\g_{0i}|+|\g_{00}+1|\le\frac{A_1}{R_1+r}.
\end{equation}
In these coordinates set
\begin{equation}\label{eq:effective-bound}
 \overline A=\max\left\{R_1,\eps_0^{-1},
       (M^2-J)^{-1},
       \sup_{\widetilde{\M}}\sum_{m=0}^6
          \bigl(|\partial^m\g_{\alpha\beta}|
                         +|\partial^m\T^\alpha|\bigr)\right\}.
\end{equation}
The component norms include summation over their indices.

\begin{theorem}\label{thm:main}
Assume GR, SBS, maximality of $\Sigma_1$, and $J<M^2$.
For every fixed $A$ there is $j_*(A)>0$ such that
\[
 M=1,\qquad \overline A\le A,\qquad J<j_*(A)
\]
imply that $(\DOC,\g)$ is isometric to a subextremal Kerr exterior.
\end{theorem}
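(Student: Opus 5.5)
The plan is to reduce Theorem~\ref{thm:main} to the main theorem of \cite{AIKsmall}. That theorem gives Kerr rigidity whenever $\overline A\le A$ and the stationary field is small on the bifurcation sphere, in the sense $\sup_{\mathcal S_0}|\T|\le\bar\eps(A)$ for a threshold $\bar\eps(A)$ produced there. It therefore suffices to show that $J<j_*(A)$ forces $\sup_{\mathcal S_0}|\T|\le\bar\eps(A)$. The abstract indicates the route: first establish, for arbitrary $J<M^2$, that $(\g,\T)$ agrees with Kerr$(M=1,a=J)$ near infinity. Then convert this identification, through two explicit Kerr identities, into a horizon estimate of the form $\sup_{\mathcal S_0}|\T|\le C(A)\,J^{\theta}$ for some $\theta>0$. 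Choosing $j_*(A)$ with $C(A)j_*^\theta\le\bar\eps(A)$ then closes the argument.

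\textbf{Step 1 (Kerr near infinity).} From the Ernst potential $\mathcal E=-\g(\T,\T)+i\omega$, where $\omega$ is the twist potential, which is well defined because $\Ric=0$, I would apply a Harrison transformation with parameter $\lambda\in[0,1)$. This produces a family $(\g_\lambda,\F_\lambda)$ of stationary Einstein--Maxwell metrics with charge $Q(\lambda)$, which degenerates as $\lambda\to1$ to a limiting end. On compact maximal hypersurfaces $\Sigma_\lambda$ with boundary near the horizon, constructed by continuity in $\lambda$ starting from $\Sigma_1$ at $\lambda=0$ (openness by the implicit function theorem, closedness by a priori curvature bounds), I would apply the area--charge inequality $|\partial\Sigma_\lambda|\ge4\pi Q^2$. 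Null focusing (Raychaudhuri) from the limiting end gives the reverse inequality. Equality then forces the data to be of Kerr--Newman type and fixes the Ernst potential on the transformed side. Inverting the Harrison map recovers the Kerr Ernst potential and hence Kerr metric and stationary field on the end. Unique continuation across the region where $\T$ is timelike, where analyticity \cite{MullerHagen} holds, extends the identification to the region $r>R$ with $R$ controlled by $A$.

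\textbf{Step 2 (horizon estimate).} In Kerr, with $M=1$ and $a=J$, two identities are available. The first is the Smarr formula $M=2\Omega_H J+\kappa|\mathcal H|/4\pi$. The second is the expression for the norm of $\T$ at the bifurcation sphere, for which $\g(\T,\T)|_{\mathcal S_0}$ has sup of order $a^2\le CJ^2$ (the ergoregion width). On the actual $\DOC$, Komar integrals of $\T$ and of the rotational field from \cite{AIKlocal}, transported from the Kerr region of Step 1 to $\mathcal S_0$ using $\Ric=0$ and the maximal slice $\Sigma_1$, equate the horizon quantities with their Kerr values. Combined with elliptic bounds on $\Sigma_1$ depending only on $A$, this would give $\sup_{\mathcal S_0}|\T|\le C(A)J^{1/2}$, say. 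The exponent is unimportant; any positive power suffices.

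\textbf{Main obstacle.} I expect the hardest step to be Step 1, specifically the construction of the maximal hypersurfaces by continuation through the Harrison family. Closedness needs uniform control as $\lambda\to1$, where the metrics degenerate. The equality case must also be shown to hold pointwise, rather than only in an integrated sense, in order to identify the stationary field. Within Step 2, the delicate point is making the constant $C(A)$ depend only on $\overline A$. I would attempt this by a compactness-contradiction argument: take sequences with $J_n\to0$ and $\overline A\le A$, and pass to a limit. That limit is static with $J=0$, hence Schwarzschild, on which $\T$ vanishes on $\mathcal S_0$.
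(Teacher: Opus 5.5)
Your reduction to the threshold of \cite{AIKsmall}, your use of Kerr near infinity for arbitrary $J$, and your contradiction argument along a sequence with $\overline A_j\le A$, $J_j\to0$ all match the paper. The gap is the passage from the Kerr end to $\mathcal S_0$, and your Step~2 cannot work as stated. The rotational field $\Z$ of \cite{AIKlocal} is Killing only near $\mathcal S_0$, so its Komar form is closed only there. Its integral therefore cannot be carried out to the Kerr region or to infinity. Identifying $\Z$ with the axial field of the Kerr end is precisely the global extension being proved, and the local counterexamples of \cite{IKextension} show why this needs extra input. The Kerr value of $\g(\T,\T)$ on the bifurcation sphere presupposes that the horizon is Kerr. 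Even granting horizon angular momentum equal to $J$, Smarr's formula gives no bound on $\Omega_H$, whereas $\g(\T,\T)|_{\mathcal S_0}=\Omega_H^2\g(\Z,\Z)$. So no rate $C(A)J^\theta$ is available, and the paper proves none.

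Your Step~1 also cannot close by squeezing two inequalities to equality. Focusing gives $|C|_{\g_0}\le4\pi$ only at the degenerate endpoint, where no maximal hypersurface is constructed. The area--charge inequality is applied to a stable minimal surface homologous to an outer sphere, not to $\partial\Sigma$. That minimal surface exists only if the sphere satisfies $H>|p|$ strictly, while focusing gives only $H\ge|p|$. The paper instead assumes a non-Kerr end, which gives a strict deficit. It deforms to small $q=\delta>0$, using a first-order Schwarzschild gain in $H-|p|$, and continues the maximal hypersurfaces only over $[\delta,1]$.

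Your compactness fallback asserts that the limit is static because $J=0$, but this is exactly the point in question. The Sudarsky--Wald identity \eqref{eq:stationary-divergence} reduces staticity to the boundary term on $\mathcal S_0$. Vanishing angular momentum at infinity controls that term only under global axial symmetry. Moreover, Theorem~\ref{thm:kerr-end} is qualitative, so the Kerr neighbourhoods may recede to infinity along the sequence; your claim that $R$ is controlled by $A$ is unjustified.

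The paper supplies horizon information that survives the limit as follows.
\begin{itemize}
\item It continues Mars's scalar identities \eqref{eq:Kerr-identities}, rather than the isometry, by analyticity through the component $U$ of $\{f>0\}\cap\Sigma_1$ containing the end.
\item Together with $f<1$, the first identity gives $|\psi|\le J$ on $U$. Since compact subsets of $U$ eventually lie in $U_j$, the limit has vanishing twist on $U$.
\item The second identity passes to the limit and gives $|\mathrm df|_h=n/2>0$ on $\partial_{\Sigma_1}U$.
\item Lemma~\ref{lem:zero-spin-staticity} then shows that $\overline U$ meets $\mathcal S_0$. Otherwise the divergence-free field $X$, tangent to $\mathcal S_0$, would have zero flux through each closed component of $\partial_{\Sigma_1}U$. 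Yet $X/n$ is the unit normal there, so that flux is $\int n\,\mathrm dA>0$.
\item At a contact point $X=0$, and $F\wedge F=0$ together with $\partial_\nu n>0$ forces the rotational part of the Killing field $X|_{\mathcal S_0}$ to vanish. Hence $\T|_{\mathcal S_0}=0$.
\end{itemize}
This contradicts $\sup_{\mathcal S_0}\g_j(\T_j,\T_j)\ge\eps_{\mathrm{AIK}}(A)^2$ directly, without identifying the limit as Schwarzschild.
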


The nonvanishing condition $\T|_{\mathcal S_0}\not\equiv0$ in
SBS is required for each approximating spacetime, but need not
persist in the $J=0$ limit used in the proof.

Theorem~\ref{thm:main} will follow from an identification of the
stationary geometry near infinity. The area comparison used to
obtain this identification requires only the qualitative
hypotheses, and gives the following result.

\begin{theorem}\label{thm:kerr-end}
Assume GR, SBS, and maximality of $\Sigma_1$, with $M=1$.
There is a diffeomorphism $\Xi$ from a neighbourhood of spatial
infinity onto a neighbourhood of spatial infinity in the Kerr
metric with mass $1$ and angular momentum $S$ such that
\[
 \Xi^*\g_{\mathrm{Kerr}}=\g,\qquad \Xi_*\T=\partial_t.
\]
\end{theorem}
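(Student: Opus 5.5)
We follow the route sketched in the abstract. The plan is to pass through the Einstein--Maxwell family generated by a Harrison transformation, identify its limiting member through the rigidity case of a geometric inequality, and then undo the transformation.

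\emph{Step 1: Ernst data and the Harrison family.} On the region where $\T$ is timelike near infinity, write the vacuum metric in terms of the norm $X=-\g(\T,\T)$, the twist potential $Y$ (with $dY=\star(\T^\flat\wedge d\T^\flat)$ suitably normalized, exact since $\R_{\alpha\beta}=0$), and the quotient metric on the orbit space. Form the Ernst potential $\mathcal E=X+iY$. For a real parameter $q$, apply the Harrison transformation of the $SU(2,1)$ action to obtain stationary electrovacuum data $(\g_q,F_q)$ with the same Killing field. The asymptotics \eqref{eq:AF} let us compute the transformed mass $M_q$, angular momentum $S_q$, and charge $Q_q$ explicitly. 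As $q$ approaches a limiting value $q_*$, we expect $M_q^2\to Q_q^2+|S_q|^2/M_q^2$-type extremality.

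For the region between the end and $\mathcal S_0$, the transformation is defined wherever the new norm $X_q$ stays positive. The argument must therefore be organized around the end and the horizon cross-sections rather than the whole ergoregion. The plan is to define $\g_q$ globally, using the twist potential on the simply connected $\Sigma_1$, as a possibly degenerate Lorentzian family. We then check that the horizon data at $\mathcal S_0$ transform smoothly, since a nonexpanding horizon is preserved.

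\emph{Step 2: maximal hypersurfaces and the area--charge inequality.} Starting from the maximal $\Sigma_1$ at $q=0$, continue in $q$ to compact maximal hypersurfaces $\Sigma_q$ with boundary on the transformed bifurcation sphere. We use the implicit function theorem for the mean curvature operator, which is invertible with Dirichlet-type conditions because there are no nontrivial Jacobi fields in vacuum/electrovacuum with $\R\ge0$-type constraint positivity. A priori estimates come from the maximal slicing and the bounds on $\g$. On each $\Sigma_q$, the dominant energy condition and stability give the Gibbons--Dain-type area--charge inequality $|\mathcal S_q|\ge 4\pi Q_q^2$ for the bifurcation sphere.

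\emph{Step 3: null focusing at the limiting end.} Independently, run Raychaudhuri along outgoing null congruences from large spheres in the limiting metric $\g_{q_*}$. The extremal asymptotics force an upper bound on the area of the horizon cross-section, $|\mathcal S_{q_*}|\le4\pi Q_{q_*}^2$. Comparing the two bounds gives equality. Rigidity in the equality case then forces $\sigma=0$ and vanishing of the non-Coulomb parts of $F$ along the congruence, and umbilicity/constancy on $\Sigma_{q_*}$. From this we read off that $(\g_{q_*},F_{q_*})$ is the extremal Kerr--Newman (Majumdar--Papapetrou-type) end, together with its Ernst potentials and stationary field.

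\emph{Step 4: inversion.} Apply the inverse Harrison transformation to the identified limiting data. Because the transformation acts algebraically on $(\mathcal E,\Phi)$ and the quotient metric is unchanged, the original Ernst potential and three-metric coincide near infinity with those of Kerr with mass $1$ and angular momentum $S$. Reconstructing $\g$ from these yields $\Xi$ with $\Xi^*\g_{\mathrm{Kerr}}=\g$ and $\Xi_*\T=\partial_t$.

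\emph{Main obstacle.} We expect the hardest part to be Step 2: continuing the maximal hypersurfaces uniformly up to $q_*$ and passing to the limit, where the family degenerates. We would first try to obtain uniform curvature and area bounds from the fixed geometry via the Harrison formulas, and take a subsequential limit in $C^{2,\alpha}$.
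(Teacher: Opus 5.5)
\textbf{There is a genuine gap, and it is in Step~3.} Raychaudhuri's equation controls area only along a smooth null hypersurface. A congruence launched from large spheres in the end will in general develop caustics and crossings before it reaches $\mathcal S_0$, since it has to traverse the interior region, including any ergoregion. Nothing in the hypotheses makes $\mathcal S_0$ a cut of such a hypersurface.

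The bound $|\mathcal S_0|_{\g_{q_*}}\le4\pi Q_{q_*}^2$ you want is a global inequality between horizon data and the ADM mass. In the paper's normalization, \eqref{eq:metric-factorization} gives $|\mathcal S_0|_{\g_0}=\tfrac14\bigl(1+\psi(p)^2\bigr)|\mathcal S_0|_{\g}$, which equals $4\pi$ for every Kerr member. Your upper bound is therefore a Penrose-type statement that is sharp on the whole Kerr family, not something focusing near infinity can deliver. Equality at the horizon would also not propagate shear-freeness out to the end.

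The paper places both inequalities on an \emph{outer} sphere instead.
\begin{itemize}
\item \emph{Upper bound.} Null focusing is used only locally, on cuts $C_a$ of null cones from the timelike line $\mathbb R\times\{i^0\}$ added to $\Omega\g_0$. This line comes from the Beig--Simon completion together with regularity of $\mathrm d\vartheta_0$. The result is $|C_a|_{\g_0}\le4\pi$. Equality forces $\chi=0$ and $\F_0(L,e_A)=0$, hence $\Phi_0=-r-iS\cdot\omega$, and hence Kerr.
\item \emph{Lower bound.} This is carried outward from the horizon by a mechanism your plan lacks. The paper builds compact maximal hypersurfaces with boundary $\mathcal S_0\sqcup C_s$; your $\Sigma_q$ are called compact but have no outer boundary specified. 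The outer sphere is chosen with $H>|p|$, so it is mean convex in the slice. Minimizing then gives a stable minimal surface $\Gamma$ homologous to $C_\delta$ with $|\Gamma|\le|C_\delta|$, and the area--charge inequality is applied to $\Gamma$. You therefore never need stability of $\mathcal S_0$, which your Step~2 leaves unjustified.
\end{itemize}

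\textbf{Three further points fail as written.}

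First, you put both inequalities at the degenerate member $q_*$, so you need maximal hypersurfaces there. This is exactly the obstacle you flag. The paper avoids it by contradiction. For a non-Kerr end, the strict deficit $|C_a|_{\g_0}<4\pi$ survives to some small $\delta>0$. A first-order comparison with the Schwarzschild family, \eqref{eq:strict-cone-path}, makes $H>|p|$ strict for all $q>0$ along the deformed cuts. The continuation then runs only on $[\delta,1]$.

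Second, the maximal-surface continuation and the height estimates need a genuinely Lorentzian family on the whole exterior, including the ergoregion. A ``possibly degenerate'' family defined where the new norm is positive does not suffice. The relevant obstruction is the Harrison denominator, not the sign of the norm. The denominator stays nonzero because maximality gives $f<1$. Mars's spacetime form of the transformation, with the one-form $W_q$, then extends $\g_q$ through $f=0$.

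Third, the limiting metric alone cannot be inverted to Kerr. Cl\'ement's Bertotti--Robinson limit is locally the same for every $J$. In an asymptotically flat rescaling the rotation parameter scales like $qJ$ and disappears, so your extremal limit is spin-independent. The angular momentum has to be read off from the potential $\Phi_0$ relative to the normalized stationary field, using $\operatorname{Im}\Phi_0\to-S\cdot\omega$ and $\sigma=(\Phi_0+1)/(\Phi_0-1)$.
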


\subsection{Main ideas}\label{subsec:ideas}

By~\cite{AIKsmall}, it suffices to control the stationary field
on the bifurcation sphere by the angular momentum at infinity.
The identity of Sudarsky and Wald~\cite{SudarskyWald,SudarskyWaldStaticity}
relates horizon data to the second fundamental form of a maximal
hypersurface, but expressing its horizon term through angular
momentum ordinarily requires the axial symmetry we seek to prove.
We introduce electric charge instead, whose conservation requires
no axial symmetry, and use an area comparison to determine the
stationary geometry near infinity.

The Harrison transformation~\cite{Harrison} acts on the stationary
gravitational and electromagnetic potentials. Applied to $(\g,0)$,
it gives Einstein--Maxwell metrics and electromagnetic two-forms
$(\g_q,\F_q)$, $0\le q\le1$, normalized by
\[
 (\g_1,\F_1)=(\g,0),\qquad
 Q_q=\frac1{4\pi}\int_C{}^*_{\g_q}\F_q=\sqrt{1-q^2}.
\]
Here ${}^*_{\g_q}$ is the spacetime Hodge star. Maxwell's equation
$\mathrm d{}^*_{\g_q}\F_q=0$ makes the flux independent of the
sphere $C$ homologous to infinity. To use this charge near the
horizon, we realize the transformation on the whole exterior.
The bound $f=-\g(\T,\T)<1$ supplied by the original maximal
slice keeps its denominator nonzero, including in the ergoregion.

At $q=0$ the sphere area at infinity tends to $4\pi$, the
area--charge lower bound for charge one. Beig and
Simon~\cite{BeigSimon} complete the conformally rescaled orbit
metric by adding a point at infinity. The cancellation of
conformal factors in \eqref{eq:limiting-connection} also lets us
extend the time-space cross terms of the rescaled spacetime
metric in a suitable stationary gauge. Infinity thus becomes
a timelike line, and intersecting past and future null cones
from this line gives spheres $C$.
On the past cone, write the area measure as $A(r,\omega)\,\mathrm d\omega$,
where $r$ is affine towards infinity, $A\to1$, and
$\mathrm d\omega$ is the unit sphere measure. If $\theta$ and
$\widehat\chi$ are the trace and trace-free part of the null second
fundamental form, Raychaudhuri's equation gives
\[
 -\log A(r,\omega)
 =\int_r^\infty(t-r)\Bigl(\tfrac12\theta^2+|\widehat\chi|^2
 +2\sum_{A=1}^2\F_0(L,e_A)^2\Bigr)(t,\omega)\,\mathrm dt\ge0,
\]
with $L=\partial_r$ and $e_1,e_2$ an orthonormal angular frame.
Thus $|C|_{\g_0}\le4\pi$, and the outgoing and ingoing null
expansions are respectively nonnegative and nonpositive.

Equality forces the shear, expansion, and transverse Maxwell
field to vanish. For the complex electric and magnetic potential
$\Phi_0$ relative to the normalized stationary field, Maxwell's
equations and the asymptotic data then give
\[
 \partial_r^2\Phi_0=0,\qquad
 \frac{\operatorname{Re}\Phi_0}{r}\longrightarrow-1,\qquad
 \operatorname{Im}\Phi_0\longrightarrow-S\cdot\omega.
\]
Choosing the affine origin along each generator so that
$r=-\operatorname{Re}\Phi_0$ yields $\Phi_0=-r-iS\cdot\omega$.
This potential determines
the remaining metric coefficients, and the inverse transformation
recovers Kerr in Lemma~\ref{lem:optical-equality}.
The stationary normalization matters here. Cl\'ement's
construction~\cite{Clement1998} transforms Kerr into the
Bertotti--Robinson product of a Lorentzian surface of curvature
$-1$ and a unit round sphere~\cite{Bertotti,IRobinson}, whose
local metric is independent of the spin. The magnetic potential
retains the angular momentum through the choice of stationary field.

For a non-Kerr end, we therefore have $|C|_{\g_0}<4\pi$.
Since $Q_q\to1$, a small deformation to $q>0$ preserves
$|C_q|_{\g_q}<4\pi Q_q^2$ by continuity. The expansion
inequalities at $q=0$, however, may be equalities.
The Schwarzschild family supplies a positive first-order gain
in the two required inequalities. Comparing parameter derivatives
transfers this gain to the actual cuts for arbitrarily small
$q>0$, as shown in \eqref{eq:strict-cone-path}.
We choose such a small $\delta>0$ and continue compact maximal
hypersurfaces from the original slice down to $q=\delta$,
keeping $\mathcal S_0$ as their inner boundary.
The causal assumptions concern $\g$, whereas the hypersurfaces
are spacelike for $\g_q$. The inverse-metric identity
\eqref{eq:inverse-charged} relates their conormals to covectors
timelike for $\g$, so original causal curves bound their heights,
as in~\cite{ChruscielWald}. Height and lapse bounds then bound
the proper time of timelike curves in compact regions of a
limiting metric. Lemma~\ref{prop:compact-causal} supplies a fixed
function with timelike gradient near each such region, allowing
Bartnik's estimate~\cite{Bartnik} to control the hypersurface
normals through the ergoregion.

The strict expansion signs make the outer boundary mean convex
in the maximal hypersurface. The minimizing construction
of~\cite{CruzMendes} gives a stable minimal surface $\Gamma$
homologous to $C_\delta$, with the same charge and no greater
area. Applying the area--charge inequality
\cite{Gibbons,DainJaramilloReiris} to $\Gamma$ gives the contradiction
\[
 4\pi Q_\delta^2\le |\Gamma|_{\g_\delta}
       \le |C_\delta|_{\g_\delta}<4\pi Q_\delta^2.
\]
This continuation takes place on $[\delta,1]$ and requires no
maximal hypersurface for $q=0$.

For the global result, we continue two scalar Kerr identities
through the component where $\T$ is timelike. When $J<1$,
they imply $|\psi|\le J$, where $\psi$ is the normalized twist
potential. Thus a limit with $\overline A_j\le A$ and $J_j\to0$
satisfies the Schwarzschild identities, without requiring a
common Kerr neighbourhood for the sequence. These identities
make any interior boundary of the timelike component a regular
level surface. Vanishing twist makes the tangential part of
$\T$ a nonzero normal there. If the component did not reach
$\mathcal S_0$, a closed component of its inner boundary would
therefore have nonzero flux, contrary to vanishing divergence
and the topology of the slice. The restriction
$\T|_{\mathcal S_0}$ is Killing and vanishes at a contact point.
Vanishing twist and the positive inward lapse derivative force
its tangential derivative to vanish there as well, so Killing
uniqueness gives vanishing on the whole sphere, as proved in
Lemma~\ref{lem:zero-spin-staticity}. Compactness then gives the
horizon estimate of~\cite{AIKsmall}.

Section~\ref{sec:preliminaries} establishes $f<1$ and the stationary
product structure. Section~\ref{sec:charging} constructs the Harrison
family and reduces Kerr rigidity near infinity to the two area
inequalities. Sections~\ref{sec:residue} and~\ref{sec:bands} prove these
inequalities by null focusing and maximal continuation, respectively.
Section~\ref{sec:finish} passes from the Kerr identities to staticity
in the zero-angular-momentum limit and proves Theorem~\ref{thm:main}.
The appendices supply the global Harrison verification and the
compactness estimates used in the continuation.

\section{The maximal slice and the stationary flow}
\label{sec:preliminaries}

We first use the maximal-slice equation to prove
$f=-\g(\T,\T)<1$, which ensures that the Harrison denominator
does not vanish, and then show that the stationary flow
identifies the exterior with $\mathbb R\times\Sigma_1$.
This identification places the transformed metrics on a common
domain and allows us to represent the maximal hypersurfaces
as graphs. Throughout the construction we work with one fixed
spacetime. Bounds uniform in $A$ enter only in the compactness
argument of Section~\ref{sec:finish}.

Our curvature convention is
\[
 \R(A,B,C,D)=\g\bigl(A,
   (\D_C\D_D-\D_D\D_C-\D_{[C,D]})B\bigr).
\]
We write $\R(Y,V)=\R_{\alpha\beta}Y^\alpha V^\beta$.
Greek indices range from $0$ to $3$, and repeated upper and lower
indices are contracted. We use $\mathrm d$ for exterior
differentiation, $i_Y$ for contraction with $Y$, $\Lie_Y$ for
Lie differentiation, and ${}^*$ for the spacetime Hodge star.
The operations $\flat$ and $\sharp$ use the metric under
consideration, so $Y^\flat=\g(Y,\cdot)$.

On a spacelike hypersurface, $\nabla$, $\Ric(h)$, and $R_h$
denote the connection, Ricci tensor, and scalar curvature of its
induced metric $h$. Norms also use $h$ unless another metric
is indicated. Constants may change from line to line,
with their dependence specified in each estimate.

\subsection{The maximal-slice identities}

Write $\T=nT_0+X$, where $n=-\g(\T,T_0)$ and $X$ is tangent
to $\overline{\Sigma}_1$. We use
$\Delta_h=\nabla^i\nabla_i$. The tangential Killing equation is
\begin{equation}\label{eq:tangential-Killing}
 0=(\Lie_\T \g)_{ij}
   =\nabla_iX_j+\nabla_jX_i-2nk_{ij}.
\end{equation}
To obtain the lapse equation, we differentiate the mean curvature
along the stationary translates of $\overline{\Sigma}_1$.
These hypersurfaces are all maximal, so the variation with
velocity $nT_0+X$ gives
\begin{equation}\label{eq:mean-curvature-variation}
 0=-\Delta_hn+
   (|k|_h^2+\R_{\alpha\beta}T_0^\alpha T_0^\beta)n+X(\tr_hk).
\end{equation}
Since $\tr_hk=0$ and $\R_{\alpha\beta}=0$, the momentum
constraint $\nabla^ik_{ij}=\nabla_j(\tr_hk)$ combines with
\eqref{eq:tangential-Killing} and \eqref{eq:mean-curvature-variation}
to give the identities
\begin{equation}\label{eq:KID}
 \begin{gathered}
 \Lie_Xh=2nk,\qquad \tr_hk=0,\qquad \Div_hk=0,\\
 \Delta_hn=n|k|_h^2,\qquad \Div_hX=0.
 \end{gathered}
\end{equation}
The last equality follows by taking the trace in
\eqref{eq:tangential-Killing} and using $\tr_hk=0$.
These conventions agree with~\cite{AIKsmall}.

We apply the maximum principle to the lapse equation in
\eqref{eq:KID}, with $n=0$ on $\mathcal S_0$ by SBS and
$n\to1$ at infinity by \eqref{eq:AF}. Together with the Hopf
boundary point lemma at $\mathcal S_0$, this gives, for the
unit normal $\nu_{\mathcal S_0}$ pointing into $\Sigma_1$,
\begin{equation}\label{eq:lapse-boundary}
 0<n<1\quad\hbox{on }\Sigma_1,\qquad
 \partial_{\nu_{\mathcal S_0}}n>0\quad\hbox{on }\mathcal S_0.
\end{equation}
Using \eqref{eq:lapse-boundary} in the stationary decomposition gives
\begin{equation}\label{eq:fbound}
 f=-\g(\T,\T)=n^2-|X|_h^2<1.
\end{equation}
Since $f$ is constant along stationary orbits and every orbit
meets $\Sigma_1$ by GR, the bound \eqref{eq:fbound} holds
throughout $\DOC$.

We shall also need the decay of the lapse, shift, and second
fundamental form. In the asymptotic stationary chart,
$\T=nT_0+X$ gives $X^i=h^{ij}\g_{0j}$ and
$n^2=-\g_{00}+h^{ij}\g_{0i}\g_{0j}$, so \eqref{eq:AF} and
\eqref{eq:tangential-Killing} yield
\begin{equation}\label{eq:slice-asymptotic-data}
 X=O_6(r^{-2}),\qquad n=1-1/r+O_6(r^{-2}),\qquad
 k=O_5(r^{-3}).
\end{equation}

The Killing equation and momentum constraint in \eqref{eq:KID} give
\begin{equation}\label{eq:stationary-divergence}
 \nabla^i(k_{ij}X^j)=k_{ij}\nabla^iX^j=n|k|_h^2.
\end{equation}
This identity of Sudarsky and
Wald~\cite{SudarskyWald,SudarskyWaldStaticity}, also used
in~\cite{AIKsmall}, relates the integral of $n|k|^2$ to the
boundary values of $X$. The decay \eqref{eq:slice-asymptotic-data}
removes the term at infinity, which is why vanishing of $X$ on
$\mathcal S_0$ will imply $k=0$ in
Lemma~\ref{lem:zero-spin-staticity}.

We use the normalization of the ADM mass
in~\cite{ADM}. With our convention for $k$, the mass and angular momentum are given by
\begin{equation}\label{eq:ADM-charges}
 \begin{aligned}
 M&=\frac1{16\pi}\lim_{r\to\infty}\int_{|x|=r}
       (\partial_jh_{ij}-\partial_ih_{jj})\nu^i\dd A,\\
 S^a&=\frac1{8\pi}\lim_{r\to\infty}\int_{|x|=r}
       k_{ij}\epsilon^i{}_{ab}x^b\nu^j\dd A.
 \end{aligned}
\end{equation}
Here $\nu$ and $\mathrm dA$ are the Euclidean outward normal and
area measure in the asymptotic chart. The vector with components
$\epsilon^i{}_{ab}x^b$ generates rotation about the $a$-th axis.
The trace term in the angular momentum integral is zero because
$\tr_hk=0$.

\subsection{The stationary flow}

For a vector field $Y$, let $\Psi_{t,Y}$ denote its local flow.
\begin{lemma}\label{lem:stationary-product}
The map
\[
 \Psi:\mathbb R\times\Sigma_1\longrightarrow\DOC,
 \qquad \Psi(t,x)=\Psi_{t,\T}(x)
\]
is a diffeomorphism. In particular $H^1(\DOC;\mathbb R)=0$,
and $H_2(\DOC;\mathbb R)$ is generated by a large coordinate sphere.
\end{lemma}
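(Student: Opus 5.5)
We will show that $\Psi$ is well defined, a local diffeomorphism, injective, and surjective, and then read off the topology from $\Sigma_1\cong E_1$.

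\emph{Well defined and local diffeomorphism.} By GR every stationary orbit in $\DOC$ is complete. Moreover $\DOC$ is invariant under the flow of $\T$, because $\M^{(\mathrm{end})}$ is invariant, so the chronological future and past of $\M^{(\mathrm{end})}$ are invariant. Hence $\Psi$ is defined on $\mathbb R\times\Sigma_1$, takes values in $\DOC$, and is smooth. Its differential at $(t,x)$ sends $\partial_t$ to $\T$ and $T_x\Sigma_1$ to $(\Psi_{t,\T})_*T_x\Sigma_1$. Since $\Psi_{t,\T}$ is a diffeomorphism commuting with the flow, transversality at $(t,x)$ reduces to transversality at $(0,x)$. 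There it is the transversal intersection of orbits with $\Sigma_1$ assumed in GR, or equivalently $n=-\g(\T,T_0)>0$ from \eqref{eq:lapse-boundary}. So $\Psi$ is a local diffeomorphism.

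\emph{Surjectivity.} This is immediate from GR, since every orbit meets $\Sigma_1$.

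\emph{Injectivity.} This is the main step. It suffices to show that each orbit meets $\Sigma_1$ exactly once, and that no orbit is periodic. Suppose $\Psi_{t,\T}(x)=y$ with $x,y\in\Sigma_1$ and $t\neq0$. I would use the Lyapunov-type argument of \cite{ChruscielWald}.
\begin{itemize}
\item[(a)] On the asymptotic end, $\T=\partial_t$ is timelike. There the translates $\Psi_{s,\T}(\Sigma^0)$ are disjoint, and for $s>0$ they lie in $\mathcal I^+(\Sigma^0)$, at least near infinity.
\item[(b)] Consider the set $U$ of $s$ for which $\Psi_{s,\T}(\Sigma_1)\cap\Sigma_1\neq\emptyset$. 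This set is closed under addition and negation, because it comes from a one-parameter group. Since $n>0$ and the intersection is transversal, $0$ is isolated in $U$: near each point, orbits cross $\Sigma_1$ once in a flow box, and near infinity the chart handles uniformity. With properness of $\Sigma_1$ in the globally hyperbolic $\DOC$, this makes $U$ a discrete subgroup $\tau\mathbb Z$.
\item[(c)] If $\tau\neq0$, take a point $p$ in the asymptotic end. The curve from $p$ along the translate $\Psi_{\tau,\T}(\Sigma_1)$ out to infinity, followed by the timelike orbit of $\partial_t$, together with the fact that $\Sigma_1$ separates $\DOC$, produces a timelike curve between two points of $\Sigma_1$. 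Here $\Sigma_1$ separates $\DOC$ because it is a Cauchy surface of the globally hyperbolic $\DOC$: it is acausal, edgeless, and equals $\Sigma^0\cap\DOC$. Such a timelike curve contradicts achronality.
\end{itemize}
More directly, I would show that $\Sigma_1$ is a Cauchy surface for $\DOC$. Then $\Psi_{\tau,\T}(\Sigma_1)$ is also Cauchy, and it lies strictly to the future of $\Sigma_1$ at infinity. Since two Cauchy surfaces with $\Psi_{\tau}(\Sigma_1)\cap\Sigma_1\neq\emptyset$ and $\tau>0$ would yield a future-directed orbit segment from $x$ back into $\Sigma_1$ that is transversal with $n>0$, the time function argument gives the contradiction. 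Periodic orbits are excluded in the same way, since they would give $\tau\in U$. The delicate point, which I expect to be the obstacle, is that $\T$ may be spacelike in the ergoregion, so orbits are not causal curves. The argument must therefore go through the Cauchy property and the separation by $\Sigma_1$, not through causality of orbits.

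\emph{Topology.} Since $\Psi$ is a bijective local diffeomorphism, it is a diffeomorphism, and $\DOC\simeq\mathbb R\times\Sigma_1\simeq\mathbb R\times E_1$. Because $E_1$ deformation retracts to a sphere $S^2$, we get $H^1=0$, and $H_2\cong\mathbb R$ is generated by a large coordinate sphere $\Phi_0(\{|x|=r\})$.
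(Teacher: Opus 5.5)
Your local-diffeomorphism and surjectivity steps are fine and agree with the paper. The injectivity step, however, is not proved, and you rightly identify it as the heart of the lemma.

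In (b), the set $U$ of return times is symmetric but not closed under addition. From $\Psi_{s,\T}(x)\in\Sigma_1$ and $\Psi_{s',\T}(x')\in\Sigma_1$ you learn nothing about $\Psi_{s+s',\T}$ unless $x'=\Psi_{s,\T}(x)$, so there is no subgroup $\tau\mathbb Z$. Nor is $0$ shown to be isolated in $U$ near $\mathcal S_0$, where $n\to0$.

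In (c), a curve running along the spacelike hypersurface $\Psi_{\tau,\T}(\Sigma_1)$ is not timelike. Concatenating it with an orbit segment in the end therefore does not give a timelike curve between two points of $\Sigma_1$.

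Your fallback rests on $\Sigma_1$ being a Cauchy surface of $\DOC$. Acausal, edgeless and closed does not imply Cauchy: the hyperboloid $t=\sqrt{1+|x|^2}$ in Minkowski space is a counterexample. The Cauchy property is also not among the hypotheses. If you had it, a separation argument would close the proof: $n>0$ makes every crossing of an orbit go from $\mathcal I^-(\Sigma_1)$ to $\mathcal I^+(\Sigma_1)$, so no orbit could cross twice. But establishing the Cauchy property is at least as hard as the lemma itself.

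The idea you are missing is to use the definition $\DOC=\mathcal I^-(\M^{(\mathrm{end})})\cap\mathcal I^+(\M^{(\mathrm{end})})$ directly to prove that $\Psi$ is proper.

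\emph{Step 1.} Cover a compact $C\subset\DOC$ by finitely many open sets $U_i$ with $p_i^-\ll q\ll p_i^+$ for all $q\in U_i$, where $p_i^\pm$ lie in the end.

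\emph{Step 2.} The stationary flow is an isometry, so it preserves $\ll$. The orbits of $p_i^\pm$ lie in the end and are timelike. Hence for $s\ge R$ you get $(0,x_i^-)\ll\Psi_{s,\T}(p_i^-)\ll\Psi_{s,\T}(q)$, and symmetrically for $s\le-R$.

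\emph{Step 3.} Achronality of $\overline\Sigma_1$ then forbids $\Psi_{s,\T}(C)$ from meeting $\Sigma_1$ when $|s|\ge R$. This bounds the times in $\Psi^{-1}(C)$ and gives properness. This step never needs the orbits through $C$ to be causal, which removes the ergoregion difficulty you flag.

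\emph{Step 4.} A proper surjective local diffeomorphism onto the connected $\DOC$ is a covering map. Count its sheets at a point of the end: its whole orbit is timelike, so it meets the achronal set $\overline\Sigma_1$ only once, and the covering has one sheet.
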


\begin{proof}
By GR, every stationary orbit meets $\Sigma_1$ transversally,
so $\Psi$ is a surjective local diffeomorphism. We prove that
it is proper and then use achronality to show that the resulting
covering has only one sheet.

Fix a compact set $C\subset\DOC$. By the definition of $\DOC$
and openness of $\ll$, finitely many open sets $U_i$ cover $C$,
with points $p_i^\pm=(t_i^\pm,x_i^\pm)$ in the asymptotic region satisfying
\[
 p_i^-\ll q\ll p_i^+\qquad(q\in U_i).
\]
Choose $R>\max_i(|t_i^-|+|t_i^+|)$. Timelikeness of the
stationary orbits through $p_i^\pm$ allows us to compare their
large translates with the zero-time slice. Since the flow
preserves $\ll$, for $q\in U_i$ we obtain
\begin{equation}\label{eq:stationary-flow-chronology}
 \begin{aligned}
 (0,x_i^-)&\ll\Psi_{s,\T}(p_i^-)\ll\Psi_{s,\T}(q),&&s\ge R,\\
 \Psi_{s,\T}(q)&\ll\Psi_{s,\T}(p_i^+)\ll(0,x_i^+),&&s\le-R.
 \end{aligned}
\end{equation}
Because the zero-time points in
\eqref{eq:stationary-flow-chronology} belong to the achronal
hypersurface $\overline{\Sigma}_1$, no translate
$\Psi_{s,\T}(C)$ with $|s|\ge R$ can meet that hypersurface.
Consequently, $\Psi(t,x)\in C$ implies $|t|\le R$ and
$x=\Psi_{-t,\T}(\Psi(t,x))$, whence
\[
 \Psi^{-1}(C)\subset[-R,R]\times
 \left(\Sigma_1\cap
       \{\Psi_{s,\T}(q):|s|\le R,\ q\in C\}\right).
\]
The set on the right is compact, since $\Sigma_1$ is closed
relative to $\DOC$ and the flow image of $[-R,R]\times C$
is compact. As a closed subset of this set, $\Psi^{-1}(C)$
is compact as well, proving properness.

The proper local diffeomorphism $\Psi$ is therefore a covering
of $\DOC$, which is connected as the image of
$\mathbb R\times\Sigma_1$. For a point of the asymptotic end,
its entire stationary orbit is timelike, and achronality of
$\overline{\Sigma}_1$ shows that its fiber has exactly one
point. The covering consequently has one sheet, so $\Psi$
is a diffeomorphism.

The cohomology conclusions now follow from
$\DOC\simeq\mathbb R\times\Sigma_1$ and
$\Sigma_1\simeq(1,\infty)\times\mathbb S^2$. In particular,
every closed one-form on $\DOC$ has a global potential, while
a closed two-form has a global primitive if and only if its
integral over a large coordinate sphere vanishes.
\end{proof}

\section{The Harrison family}
\label{sec:charging}

We construct the normalized Harrison family of
Section~\ref{subsec:ideas} on the whole exterior. The quotient
transformation must extend through $f=0$ and preserve the
bifurcation sphere. Its spacetime formula will also relate the
transformed conormals to the original causal geometry, as
required for the maximal-hypersurface construction.

\subsection{The transformed potentials}

An Einstein--Maxwell solution consists of a Lorentz metric $\g$
and a real electromagnetic two-form $\F$ satisfying
\begin{equation}\label{eq:Einstein-Maxwell}
 \R_{\alpha\beta}
 =2\left(\F_{\alpha\mu}\F_\beta{}^\mu
       -\tfrac14\g_{\alpha\beta}\F_{\mu\nu}\F^{\mu\nu}\right),
 \qquad \mathrm d\F=0,\qquad \mathrm d{}^*\F=0.
\end{equation}
The electric charge through an oriented sphere $C$ is
$Q(C)=(4\pi)^{-1}\int_C{}^*\F$. Maxwell's equation makes
this flux equal on homologous spheres.
To apply the transformation, we first fix the norm and twist
potentials of the original vacuum metric.

Following \cite{AIKlocal}, define the Killing two-form
and its complex self-dual form by
\[
 F_{\alpha\beta}=\D_\alpha\T_\beta,\qquad
 F=\tfrac12\mathrm d\T^\flat,\qquad
 \cF=F+i{}^*F,\qquad {}^*\cF=-i\cF.
\]
Here $({}^*F)_{\alpha\beta}
=\tfrac12\epsilon_{\alpha\beta}{}^{\mu\nu}F_{\mu\nu}$,
where $\epsilon$ is the spacetime volume form, oriented by
$\mathrm dt\wedge\mathrm dx^1\wedge\mathrm dx^2\wedge\mathrm dx^3$
in the asymptotic chart.
Vacuum gives $\mathrm dF=\mathrm d{}^*F=0$. We normalize the Ernst
potential by
\[
 \mathrm d\sigma=2i_\T\cF,\qquad \sigma\longrightarrow1
       \quad(r\longrightarrow\infty).
\]
Writing $\sigma=f+i\psi$, this means
\begin{equation}\label{eq:vacuum-ernst}
 f=-\g(\T,\T),\qquad
 \mathrm d\psi=2i_\T{}^*F,\qquad \psi\longrightarrow0
       \quad(r\longrightarrow\infty).
\end{equation}
The potential is global on $\DOC$ because Cartan's identity gives
$\mathrm d(2i_\T\cF)=2\Lie_\T\cF-2i_\T\mathrm d\cF=0$
and Lemma~\ref{lem:stationary-product} makes this closed one-form
exact. To extend it across $\mathcal S_0$, choose a tubular
neighbourhood retracting to $\mathbb S^2$ and take a potential
there for the same smooth closed form. On the connected exterior
overlap the potentials differ by a constant, which we choose
so that they agree. This defines the vacuum Ernst
potential~\cite{ErnstVacuum} with the normalization of
\cite{AIKlocal,IKuniqueness} on $\DOC$ and across $\mathcal S_0$.
The condition
\[
 \mathrm d\psi=0
 \quad\Longleftrightarrow\quad
 \T^\flat\wedge\mathrm d\T^\flat=0
\]
is the Frobenius condition for its orthogonal planes to be
integrable. Thus the twist records the obstruction to a static
description where $\T$ is timelike.

Where $f\ne0$, the original stationary data are
\begin{equation}\label{eq:stationary-reduction}
 \vartheta=-f^{-1}\T^\flat-\mathrm dt,\qquad
 \gamma=f\g+\T^\flat\otimes\T^\flat,\qquad
 \g=-f(\mathrm dt+\vartheta)^2+f^{-1}\gamma .
\end{equation}
Both $\vartheta$ and $\gamma$ descend to the space of stationary
orbits. On $\T^\perp$ the orbit metric is $\gamma=f\g$, so it
is positive definite for $f>0$ and has signature $(+,-,-)$
for $f<0$. The stationary connection $\vartheta$ changes by
an exact form under a change of time coordinate,
\[
 t'=t+\chi(x),\qquad \vartheta'=\vartheta-\mathrm d\chi,
 \qquad \mathrm dt'+\vartheta'=\mathrm dt+\vartheta.
\]

For a Maxwell field preserved by a Killing field $Y$, its complex
electric and magnetic potential is defined by $i_Y(\F+i{}^*\F)=\mathrm d\Phi$.
Following Ernst~\cite{ErnstMaxwell}, the gravitational potential is
$\sigma=-\g(Y,Y)-|\Phi|^2+i\psi$, with twist convention
\begin{equation}\label{eq:em-potentials}
 i_Y{}^*\mathrm dY^\flat
       =\mathrm d\psi+2\operatorname{Im}(\overline\Phi\,\mathrm d\Phi).
\end{equation}
Each star and metric dual uses the metric under consideration.
We use the Harrison transformation~\cite{Harrison} in the form
\begin{equation}\label{eq:harrison}
 \begin{gathered}
 \sigma_q=\frac{(1+q)\sigma-(1-q)}{1+q-(1-q)\sigma},\qquad
 \Phi_q=-\sqrt{1-q^2}\,
                    \frac{1+\sigma}{1+q-(1-q)\sigma},\\
 D_q=|1+q-(1-q)\sigma|^2,\\
 f_q=-\g_q(\partial_\tau,\partial_\tau)
     =\operatorname{Re}\sigma_q+|\Phi_q|^2=\frac{4f}{D_q}.
 \end{gathered}
\end{equation}
The potentials in \eqref{eq:harrison} are taken relative to
$\partial_\tau$ and preserve the orbit metric $\gamma$.
For $q>0$, the norm tends to $-q^{-2}$, so the unit field at
infinity is $q\partial_\tau$. We will fix the relation to the
original field $\T$ by the horizon normalization
\eqref{eq:normalized-time}. The additive constant in $\Phi_q$
has been chosen so that
\[
 (\sigma_1,\Phi_1)=(\sigma,0),\qquad
 \sigma_0=-1,\quad \Phi_0=-\frac{1+\sigma}{1-\sigma}.
\]
Although $\sigma_0$ is constant, the electromagnetic potential
still determines the original norm and twist through
\begin{equation}\label{eq:inverse-harrison}
 \sigma=\frac{\Phi_0+1}{\Phi_0-1},
\end{equation}
the inverse relation used in Cl\'ement's
construction~\cite{Clement1998}.

The formulas are defined throughout the exterior, since the
bound \eqref{eq:fbound} gives
\begin{equation}\label{eq:harrison-denominator}
 \operatorname{Re}\bigl(1+q-(1-q)\sigma\bigr)
       =2q+(1-q)(1-f)>0.
\end{equation}
In particular the denominator remains nonzero in the ergoregion.

The Schwarzschild family makes the change in the end geometry explicit.
Write
\[
 \gcirc=\mathrm d\theta^2+\sin^2\theta\,\mathrm d\phi^2,
 \qquad \mathrm d\omega=\sin\theta\,\mathrm d\theta\wedge\mathrm d\phi
\]
for the unit round metric and its area form. For Schwarzschild,
$\sigma=1-2/R$ and $\gamma=\mathrm dR^2+R(R-2)\gcirc$.
Setting $x=R-1$ in \eqref{eq:harrison} gives
\begin{equation}\label{eq:schwarzschild-family}
 \begin{gathered}
 \g_q=-\frac{x^2-1}{(1+qx)^2}\,\mathrm d\tau^2
       +(1+qx)^2\left(\frac{\mathrm dx^2}{x^2-1}+\gcirc\right),\\
 \Phi_q=-\frac{\sqrt{1-q^2}\,x}{1+qx},\qquad x>1.
 \end{gathered}
\end{equation}
At $q=0$ this is the Bertotti--Robinson
product~\cite{Bertotti,IRobinson}, with $\Phi_0=-x$ and
$\F_0=\mathrm dx\wedge\mathrm d\tau$, as in~\cite{Clement1998}.
Its Lorentzian
factor has curvature $-1$ and is called two-dimensional
anti-de Sitter space, denoted $AdS_2$. The change $x=\cosh\ell$ makes the constant-time metric
$\mathrm d\ell^2+\gcirc$, with sphere area $4\pi$.

\subsection{The family on the whole exterior}

We first fix the horizon fields used to normalize the transformation.

By the local rigidity theorem \cite{AIKlocal}, there are commuting
Hawking and rotational Killing fields near $\mathcal S_0$.
The condition $\T|_{\mathcal S_0}\not\equiv0$ makes the circle
action nontrivial. Its generator is a constant linear combination
of $\T$ and the Hawking field, with nonzero Hawking-field
coefficient, since otherwise $\T$ would have periodic exterior
orbits, contrary to Lemma~\ref{lem:stationary-product}.
We may therefore normalize the rotational field to have least
positive period $2\pi$ and rescale the Hawking field to obtain
\begin{equation}\label{eq:local-circle}
 \K=\T+\Omega_H\Z,\qquad \K|_{\mathcal S_0}=0,\qquad
 \Psi_{2\pi,\Z}=\operatorname{Id}.
\end{equation}
Here $\Omega_H$ is the angular velocity. Since $\K$ is normal
to both null branches, they are Killing horizons, with surface
gravity $\kappa$ defined by $\D_\K\K=\kappa\K$ on
$\mathcal H^+$. In our normalization,
\eqref{eq:surface-gravity-lapse} identifies $\kappa$ with the
positive normal derivative of the lapse at a fixed point of the
circle action. Thus $\kappa>0$, and $\K$ is timelike in the
nearby exterior. Fix such a point $p\in\mathcal S_0$, so that
$\T(p)=0$, and write $S_r=\{t=0,|x|=r\}$ for the outward
oriented coordinate spheres in the fixed end.

\begin{proposition}\label{prop:charging}
Under GR, SBS, and the maximal-hypersurface assumption, there are
Lorentz metrics $\g_q$ and real two-forms $\F_q$ on
$\DOC\cup\mathbf O$, for a neighbourhood $\mathbf O$ of $\mathcal S_0$, such that
\[
 (\g_1,\F_1)=(\g,0),\qquad
 \Lie_\T\g_q=\Lie_\T\F_q=0 .
\]
The metrics depend smoothly on $q\in[0,1]$ on every compact subset.
The fields $\F_q$ are continuous in $q$ and smooth in spacetime.
On $\DOC$ the pair
$(\g_q,\F_q)$ satisfies \eqref{eq:Einstein-Maxwell} and
\[
 \frac1{4\pi}\int_{S_r}{}^*_{\g_q}\F_q=\sqrt{1-q^2},\qquad
 \frac1{4\pi}\int_{S_r}\F_q=0 .
\]
The fixed fields $\K,\Z$ remain Killing near $\mathcal S_0$, and both horizon
branches remain Killing horizons with the same generator $\K$
and surface gravity. Moreover
$\operatorname{sign}\g_q(\T,\T)=\operatorname{sign}\g(\T,\T)$,
so $\T$ is timelike in the fixed end.
\end{proposition}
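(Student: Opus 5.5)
The construction will write the Harrison family globally as an explicit spacetime formula in the original fields, rather than through the orbit-space quotient. The quotient breaks down where $f=0$ and at $\mathcal S_0$, so we avoid it.

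\textbf{Step 1: the formula where $f\ne0$.} Here the standard construction applies. Keep the orbit metric $\gamma$ from \eqref{eq:stationary-reduction}. Take the potentials $(\sigma_q,\Phi_q)$ from \eqref{eq:harrison}, relative to $\partial_\tau=\T$, so that $f_q=4f/D_q$. Recover the new connection $\vartheta_q$ from the twist relation \eqref{eq:em-potentials}. Set
\[
 \g_q=-f_q(\mathrm dt+\vartheta_q)^2+f_q^{-1}\gamma,
\]
and let $\F_q$ be the real two-form whose self-dual part has $i_\T(\F_q+i{}^*\F_q)=\mathrm d\Phi_q$. The $SU(2,1)$ invariance of the Ernst equations (Kinnersley, Harrison) gives \eqref{eq:Einstein-Maxwell}. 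Stationarity holds because every ingredient is $\T$-invariant.

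\textbf{Step 2: removing the singularity at $f=0$.} I expect this to be the main obstacle. The aim is to show that $\g_q$ and $\F_q$ extend smoothly across $\{f=0\}$ and across $\mathbf O$. I would rewrite $\g_q$ as
\[
 \g_q=\frac{D_q}{4}\Bigl(\g+\T^\flat\otimes\alpha_q+\alpha_q\otimes\T^\flat\Bigr)+\beta_q\,\T^\flat\otimes\T^\flat .
\]
Here $\alpha_q$ and $\beta_q$ are built from $\mathrm d\psi$, $\psi$, $f$ and an integrated one-form. I would define $\alpha_q$ by requiring $\vartheta_q-(D_q/4)\vartheta$ to be a smooth closed-up expression, integrated using $H^1(\DOC)=0$ from Lemma~\ref{lem:stationary-product}. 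Across $\mathcal S_0$ I would integrate in the tubular neighbourhood, as was done for $\sigma$.

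The key algebraic point is that the factors $f^{-1}$ cancel against $f_q^{-1}=D_q/(4f)$. Consequently only $D_q$ appears in denominators, and $D_q>0$ by \eqref{eq:harrison-denominator}. I would check the explicit identity on the inverse metric, namely the formula \eqref{eq:inverse-charged} anticipated in the introduction, since that is the form later needed for conormals. Nondegeneracy and Lorentz signature then follow by continuity in $q$ from $q=1$, where $\g_1=\g$. This works because the determinant is a smooth nonvanishing multiple of $\det\g$: one computes $\det\g_q=(D_q/4)^2\det\g$ on the dense set $f\ne0$. The field equations extend by density. Smooth dependence on $q\in[0,1]$ is clear from the formulas, since $\sqrt{1-q^2}$ enters only $\F_q$. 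This is why $\F_q$ is claimed only continuous in $q$, at $q=1$.

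\textbf{Step 3: charges.} The charge and the magnetic flux come from the asymptotics. As $r\to\infty$ we have $\sigma=1-2/r+O(r^{-2})+i\,O(r^{-2})$, so
\[
 \Phi_q=-\sqrt{1-q^2}\,\frac{2-2/r}{2q+(1-q)2/r}+\dots
\]
For $q>0$ this expands in $1/r$ with leading $1/r$ coefficient $\sqrt{1-q^2}$, after normalizing by $q\partial_\tau$. The imaginary part is $O(r^{-2})$, which gives zero magnetic flux. For $q=0$ one can read the coefficient off the Schwarzschild comparison \eqref{eq:schwarzschild-family}. Alternatively, evaluate the flux on $S_r$ directly via $\int_{S_r}{}^*\F_q$ using $i_\T{}^*\F_q$; the normalization is fixed by matching Schwarzschild, where the flux is $\sqrt{1-q^2}$. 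Independence of $r$ follows from Maxwell's equations.

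\textbf{Step 4: horizon properties.} Near $\mathcal S_0$ the fields $\Z$ and $\K$ commute with $\T$ and preserve $\g$, $\sigma$ and the integrated forms, after choosing constants invariantly, possibly averaging over the circle. Hence they remain Killing for $\g_q$. On $\mathcal H^\pm$ we have $\K\parallel\T^\flat$-related null directions. Moreover $\g_q(\K,\cdot)$ is a multiple of $\g(\K,\cdot)$ there, because $\T^\flat(\K)$ and $\g(\K,\K)$ vanish on the horizon. It follows that $\mathcal H^\pm$ stay $\g_q$-null and Killing with generator $\K$. The surface gravity is unchanged since the conformal factor $D_q/4$ is constant along generators; equivalently, $\kappa^2=-\tfrac12|\D\K|^2$ is evaluated at $\mathcal S_0$, where $f=0$ and $\psi$ is constant. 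Finally, $\operatorname{sign}\g_q(\T,\T)=\operatorname{sign}(-4f/D_q)=\operatorname{sign}\g(\T,\T)$.
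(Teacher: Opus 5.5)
The overall strategy matches the paper: a regular spacetime formula, $D_q>0$ from $f<1$, and the charge read off from the expansion of $q\Phi_q$. But the central formula of your Step~2 is wrong in the rotating case, and your Step~1 normalization $\partial_\tau=\T$ rules out the extension across $\mathcal S_0$.

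\textbf{The regular form of the metric.} Substitute $\gamma=f\g+\T^\flat\otimes\T^\flat$ and $4(\mathrm d\tau+\vartheta_q)=D_q(\mathrm dt+\vartheta)+W_q$ into $-f_q(\mathrm d\tau+\vartheta_q)^2+f_q^{-1}\gamma$. This gives
\[
 \g_q=\tfrac14\bigl(D_q\g+\T^\flat\otimes W_q+W_q\otimes\T^\flat-(f/D_q)\,W_q\otimes W_q\bigr).
\]
The last term is not of the form $\T^\flat\odot(\cdot)$. Where $f\ne0$, your ansatz would force $W_q$ to vanish on $\ker\T^\flat$. That in turn forces $\mathrm d\vartheta_q$ to be a constant multiple of $\mathrm d\vartheta\propto*_\gamma\mathrm d\psi$. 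However, by \eqref{eq:connection-curl}, $\mathrm d\vartheta_q$ contains the term $-\tfrac{(1-q)^2\psi}{2f}*_\gamma\mathrm df$, which is independent of $*_\gamma\mathrm d\psi$ whenever $q<1$ and $\psi\,\mathrm df\wedge\mathrm d\psi\ne0$, for instance in Kerr with $J\ne0$. So your determinant and horizon computations do not rest on a correct expression. Signature and the identity \eqref{eq:inverse-charged} come instead from the factorization \eqref{eq:metric-factorization} through $v\mapsto v+D_q^{-1}W_q(v)\T$.

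\textbf{The time normalization.} Evaluating $4(\mathrm d\tau+\vartheta_q)=D_q(\mathrm dt+\vartheta)+W_q$ on $\T$ with $\tau=t$ gives $W_q(\T)=4-D_q$. A one-form that is smooth at a zero $p\in\mathcal S_0$ of $\T$ must satisfy $W_q(\T)(p)=0$. But $D_q(p)=(1+q)^2+(1-q)^2\psi(p)^2\ne4$ in general. This is not cosmetic. On $\DOC$ your metric is the pullback of the correct one by $t\mapsto\lambda t$, and that map does not extend across the bifurcation sphere. The surface gravity of a Killing field at a bifurcation sphere is fixed by its linearized flow on the normal plane, so it is independent of the smooth metric. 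In Schwarzschild your $\g_q$ gives $\T$ surface gravity $(1+q)^{-2}$ instead of $1/4$. The normalization must be $\partial_\tau=4D_q(p)^{-1}\T$, i.e.\ $W_q(\T)=D_q(p)-D_q$. This is exactly what makes the claim ``same generator $\K$ and same surface gravity'' true.

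\textbf{Further missing steps.}
\begin{itemize}
\item $W_q=(1-q)^2W_0-4q(1-q)\T^\flat$ is a primitive of a closed two-form. Its existence is therefore obstructed in $H^2$, not $H^1(\DOC)$. You must check that the period $\int_{S_r}\Theta_0$ vanishes (a NUT-type obstruction); the paper does this using $(\Theta_0)_{ij}=O(r^{-4})$.
\item The primitive must be smooth across $\mathcal S_0$ and $\Z$-invariant there. The paper achieves this by circle-averaging a primitive on $\mathcal S_0$ and applying the relative Poincar\'e homotopy in the normal exponential chart. This matters because on $\mathcal H^\pm$ the identity $\g_q(\K,\cdot)=\tfrac{D_q}4\g(\K,\cdot)$ needs $W_q(\K)=0$. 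That follows from $\Lie_\K W_q=0$ and $\K|_{\mathcal S_0}=0$; the conditions $\T^\flat(\K)=\g(\K,\K)=0$ alone do not suffice.
\item You need an explicitly regular formula for $\F_q$, such as \eqref{eq:maxwell}, since reconstructing a Maxwell field from $\Phi_q$ divides by $f_q$.
\item ``Extend by density'' is not justified. Only $\{\T\ne0\}$ is dense; an open region where $\T$ is null is not excluded. The paper handles this by approximating the $2$-jets of $(\g,\psi,W_0)$ at a point by stationary vacuum jets with $f>0$.
\item At $q=0$ the end is no longer asymptotically flat. The charge there follows by continuity in $q$ on a fixed sphere; a Schwarzschild comparison does not determine it for a general end.
\end{itemize}
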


Where $f\ne0$, write the transformed metric with stationary
time $\tau$ and connection $\vartheta_q$ as
\[
 \g_q=-\frac{4f}{D_q}(\mathrm d\tau+\vartheta_q)^2
                         +\frac{D_q}{4f}\gamma.
\]
To extend this metric through $f=0$, write the change of
connection in the form
\begin{equation}\label{eq:transformed-connection}
 4(\mathrm d\tau+\vartheta_q)
      =D_q(\mathrm dt+\vartheta)+W_q.
\end{equation}
Substituting this expression and
$\gamma=f\g+\T^\flat\otimes\T^\flat$ from
\eqref{eq:stationary-reduction}, and using
$\T^\flat=-f(\mathrm dt+\vartheta)$, we obtain
\begin{equation}\label{eq:metric-expanded}
 \begin{aligned}
 \g_q
 &=-\frac f{4D_q}\bigl(D_q(\mathrm dt+\vartheta)+W_q\bigr)^2
       +\frac{D_q}4\g+\frac{D_qf}4(\mathrm dt+\vartheta)^2\\
 &=\frac{D_q}4\g-\frac f4\bigl((\mathrm dt+\vartheta)\otimes W_q
                    +W_q\otimes(\mathrm dt+\vartheta)\bigr)
       -\frac f{4D_q}W_q\otimes W_q\\
 &=\tfrac14\bigl(D_q\g+\T^\flat\otimes W_q
       +W_q\otimes\T^\flat-(f/D_q)W_q\otimes W_q\bigr).
 \end{aligned}
\end{equation}
The final tensor expression in \eqref{eq:metric-expanded}
has no division by $f$, so a smooth spacetime one-form $W_q$
extends its coefficients through $f=0$.
After rescaling by $4/D_q(p)$, the metric formula agrees with
Mars's spacetime transformation~\cite{MarsEhlers}, with vector
$\T$ and one-form $-W_q/D_q(p)$.

The transformed twist equation and the normalization at $p$
require
\begin{equation}\label{eq:charged-one-form}
 \begin{aligned}
 \mathrm dW_q
  &=4(1-q)\bigl(((1-q)f-(1+q))F+(1-q)\psi{}^*F\bigr),\\
 W_q(\T)&=D_q(p)-D_q .
 \end{aligned}
\end{equation}
The curl calculation is given in \eqref{eq:connection-curl}.
The polynomial dependence on $q$ allows us to obtain all the
one-forms from a single real stationary primitive $W_0$ satisfying
\begin{equation}\label{eq:W}
 \mathrm dW_0=4\bigl((f-1)F+\psi{}^*F\bigr),\qquad
 W_0(\T)=D_0(p)-D_0 ,
\end{equation}
where $D_0=|1-\sigma|^2$. The vacuum identities
\begin{equation}\label{eq:vac-forms}
 \mathrm dF=\mathrm d{}^*F=0,\qquad
 2i_\T F=\mathrm df,\qquad
 \mathrm df\wedge F=-\mathrm d\psi\wedge{}^*F
\end{equation}
show that the two-form in \eqref{eq:W} is closed, while
\eqref{eq:vacuum-ernst} gives
\begin{equation}\label{eq:W-contraction}
 i_\T\mathrm dW_0=2(f-1)\mathrm df+2\psi\,\mathrm d\psi
                =\mathrm d|1-\sigma|^2=\mathrm dD_0.
\end{equation}
Appendix~\ref{app:harrison} constructs a stationary primitive
smooth across $\mathcal S_0$, with $(W_0)_i=O(r^{-3})$ and
the differentiated bounds \eqref{eq:primitive-radial-decay}.
We set
\begin{equation}\label{eq:regular-charged}
 W_q=(1-q)^2W_0-4q(1-q)\T^\flat .
\end{equation}
Differentiating \eqref{eq:regular-charged} and using
\eqref{eq:W} and $\mathrm d\T^\flat=2F$ gives the first
identity in \eqref{eq:charged-one-form}. For the normalization,
the polynomial identity
\begin{equation}\label{eq:charged-denominator-polynomial}
 D_q=(1-q)^2D_0+4q\bigl(1-(1-q)f\bigr)
\end{equation}
and $f(p)=0$ give
\begin{equation}\label{eq:charged-primitive-normalization}
 \begin{aligned}
 W_q(\T)
 &=(1-q)^2\bigl(D_0(p)-D_0\bigr)+4q(1-q)f\\
 &=D_q(p)-D_q.
 \end{aligned}
\end{equation}
Thus $W_q$ is stationary and smooth in both $q$ and spacetime.
Equation~\eqref{eq:charged-primitive-normalization} makes $\vartheta_q$ in
\eqref{eq:transformed-connection} annihilate $\T$ when
\begin{equation}\label{eq:normalized-time}
 \tau=\tfrac14D_q(p)t,\qquad
 \partial_\tau=4D_q(p)^{-1}\T.
\end{equation}
For $q>0$, $s=\tau/q$ is the unit asymptotic time and
$\partial_s=q\partial_\tau$.

The bound \eqref{eq:harrison-denominator} keeps $D_q>0$
on the exterior, while $f\le0$ on $\mathcal S_0$ gives
$D_q\ge(1+q)^2\ge1$ there. Compactness therefore gives
a common neighbourhood where $D_q\ge\tfrac12$ for all $q$,
so \eqref{eq:metric-expanded} extends smoothly across the
sphere. To check its signature, factor it as
\begin{equation}\label{eq:metric-factorization}
 \g_q(v,w)=\frac{D_q}{4}\,
 \g\left(v+\frac{W_q(v)}{D_q}\T,
         w+\frac{W_q(w)}{D_q}\T\right).
\end{equation}
By \eqref{eq:charged-one-form}, the linear map in
\eqref{eq:metric-factorization} has determinant and inverse
\begin{equation}\label{eq:metric-map-inverse}
 1+\frac{W_q(\T)}{D_q}=\frac{D_q(p)}{D_q}>0,
 \qquad v\longmapsto v-\frac{W_q(v)}{D_q(p)}\T.
\end{equation}
It is therefore invertible, and the positive factor $D_q/4$
preserves the Lorentzian signature, including at $f=0$.
For every covector $\xi$, the inverse map gives
\begin{equation}\label{eq:inverse-charged}
 \begin{split}
 \g^{-1}\left(\xi-\frac{\xi(\T)}{D_q(p)}W_q,
              \xi-\frac{\xi(\T)}{D_q(p)}W_q\right)
 =\frac{D_q}{4}\g_q^{-1}(\xi,\xi).
 \end{split}
\end{equation}
For a $\g_q$-spacelike graph $t=u(x)$, put $G=t-u(x)$.
Since $\mathrm dG(\T)=1$, \eqref{eq:inverse-charged} makes
$\mathrm dG-W_q/D_q(p)$ timelike for $\g$. Integrating this
covector along original causal curves will give the height
estimate in Section~\ref{sec:bands}, including in the ergoregion.
Evaluating \eqref{eq:metric-factorization} on $\T$ and using
\eqref{eq:normalized-time} gives
\begin{equation}\label{eq:charged-norm}
 \g_q(\T,\T)=-\frac{fD_q(p)^2}{4D_q},\qquad
 -\g_q(\partial_\tau,\partial_\tau)=\frac{4f}{D_q}.
\end{equation}
Thus the squared norm of $\T$ keeps its sign, and $W_1=0$, $D_1=4$
give $\g_1=\g$.

To construct the Maxwell field, apply the linear map in
\eqref{eq:metric-factorization} to $\cF$.
Since $i_\T\cF=\tfrac12\mathrm d\sigma$, its pullback of
$\cF$ is $\cF+(2D_q)^{-1}W_q\wedge\mathrm d\sigma$.
The map preserves orientation by \eqref{eq:metric-map-inverse},
and the Hodge star on two-forms is conformally invariant, so
this pullback is self-dual for $\g_q$. Multiplying it by the
factor prescribed by the Harrison potentials gives
\begin{equation}\label{eq:maxwell}
 \F_q+i{}^*_{\g_q}\F_q=-\sqrt{1-q^2}
 \frac{1+q-(1-q)\overline\sigma}{1+q-(1-q)\sigma}\,
 \left(\cF+\frac1{2D_q}W_q\wedge\mathrm d\sigma\right).
\end{equation}
The potential normalization follows from
$\mathrm d\sigma(\T)=0$, \eqref{eq:charged-one-form}, and
$\partial_\tau=4\T/D_q(p)$ in \eqref{eq:normalized-time},
\begin{equation}\label{eq:charged-potential}
 \begin{aligned}
 i_{\partial_\tau}(\F_q+i{}^*_{\g_q}\F_q)
 &=-2\sqrt{1-q^2}\,
   \frac{1+q-(1-q)\overline\sigma}{1+q-(1-q)\sigma}
   \frac{D_q+W_q(\T)}{D_qD_q(p)}\,\mathrm d\sigma\\
 &=-\frac{2\sqrt{1-q^2}}{(1+q-(1-q)\sigma)^2}\,\mathrm d\sigma
   =\mathrm d\Phi_q.
 \end{aligned}
\end{equation}
The denominator bounds also make \eqref{eq:maxwell} smooth
in spacetime. Its factor $\sqrt{1-q^2}$ gives continuity in
$q$ up to one, where $\F_1=0$.

Appendix~\ref{app:harrison} verifies the Einstein--Maxwell
equations, preservation of the horizon fields, and the collar
estimates of Lemma~\ref{lem:harrison-collar}. We compute the
fluxes next.

\subsection{The connection at infinity and the charge}

We choose the stationary times so that all the connections have
zero radial component near infinity. Writing
\[
 (W_q)_{\mathrm{sp}}=W_q-[D_q(p)-D_q]\,\mathrm dt
\]
for the spatial part of $W_q$, first change the original time
to set $\vartheta_r=0$, then add to $W_0$ a stationary exact
form, cut off away from $\mathcal S_0$, to set
$(W_0)_{\mathrm{sp},r}=0$. In this choice of coordinates,
\[
 \vartheta_0=\tfrac14\bigl(|1-\sigma|^2\vartheta+(W_0)_{\mathrm{sp}}\bigr).
\]
Using this formula, \eqref{eq:regular-charged}, and
$\T_i\,\mathrm dx^i=-f\vartheta$, followed by
\eqref{eq:charged-denominator-polynomial}, we obtain
\begin{equation}\label{eq:connection-interpolation}
 \begin{aligned}
 4\vartheta_q
 &=D_q\vartheta+(W_q)_{\mathrm{sp}}\\
 &=\bigl(D_q-(1-q)^2D_0+4q(1-q)f\bigr)\vartheta
                         +4(1-q)^2\vartheta_0\\
 &=4q\vartheta+4(1-q)^2\vartheta_0.
 \end{aligned}
\end{equation}
Substitution in \eqref{eq:transformed-connection} and
\eqref{eq:charged-norm} now gives
\begin{equation}\label{eq:charged-connection}
 \g_q=-\frac{4f}{D_q}(\mathrm d\tau+\vartheta_q)^2
                          +\frac{D_q}{4f}\gamma,\qquad
 \vartheta_q=(1-q)^2\vartheta_0+q\vartheta.
\end{equation}
Thus the two radial choices make $(\vartheta_q)_r=0$ for every
$q$, as desired.

For the charge calculation we need the leading twist term at
infinity. Since $F=\tfrac12\mathrm d\T^\flat$,
\eqref{eq:AF} gives $F_{ab}=O_5(r^{-3})$ and
$F_{0a}=O_5(r^{-2})$ for spatial indices $a,b$, while inversion
of the asymptotic metric gives $\g^{0a}=O_6(r^{-2})$.
In the identity
$\partial_i\psi=\epsilon_{0i}{}^{\alpha\beta}F_{\alpha\beta}$
from \eqref{eq:vacuum-ernst}, the terms containing $F_{0a}$
also contain $\g^{b0}$ and are consequently $O_5(r^{-4})$.
The spatial terms have the same error when their inverse-metric
coefficients and volume density are replaced by their Euclidean
limits. We can therefore use the spatial curl of $\g_{0i}$
in \eqref{eq:AF}. With $x=r\omega$ and $|\omega|=1$,
the curl and the normalization $\psi(\infty)=0$ give
\begin{equation}\label{eq:ernst-end-expansion}
 \begin{aligned}
 \partial_i\psi
 &=\epsilon_{ijk}\partial_j\g_{0k}+O_5(r^{-4})\\
 &=2S_i r^{-3}-6(S\cdot x)x_i r^{-5}+O_5(r^{-4})\\
 &=2\partial_i\bigl((S\cdot x)r^{-3}\bigr)+O_5(r^{-4}),\\
 \psi(r\omega)
 &=-\int_r^\infty\omega^i\partial_i\psi(s\omega)\,\mathrm ds
   =2(S\cdot\omega)r^{-2}+O_6(r^{-3}),\\
 \sigma&=1-\frac2r+O_6(r^{-2}).
 \end{aligned}
\end{equation}
Here radial integration bounds the remainder in $\psi$ and the
formula for $\partial_i\psi$ bounds its derivatives through
order six. Combining this with $f=-\g_{00}$ gives the expansion of $\sigma$,
with constants controlled by the coefficients and remainder
bounds in \eqref{eq:AF}.

To compute the charges, fix $q>0$ and use the original
asymptotic chart. Substituting \eqref{eq:ernst-end-expansion}
in the definition of $D_q$, and using
$\T_i=\g_{0i}=O_6(r^{-2})$ and
\eqref{eq:primitive-radial-decay} in
\eqref{eq:regular-charged}, yields
\begin{equation}\label{eq:charged-asymptotic-data}
 D_q=4q^2+O_2(r^{-1}),\qquad
 W_{q,i}=O_2(r^{-2}).
\end{equation}
With $s=\tau/q$, $x'=qx$, and $r'=|x'|$,
\eqref{eq:charged-asymptotic-data} and \eqref{eq:metric-expanded}
give $\g_q=-\mathrm ds^2+|\mathrm dx'|^2+O_2((r')^{-1})$.
Expanding \eqref{eq:harrison} by \eqref{eq:ernst-end-expansion}
and using $\partial_s=q\partial_\tau$ in
\eqref{eq:charged-potential}, we obtain
\begin{equation}\label{eq:charged-potential-expansion}
 \begin{gathered}
 q\Phi_q+Q_q
   =Q_q\frac{1-\sigma}{1+q-(1-q)\sigma}
   =\frac{Q_q}{r'}+O_2((r')^{-2}),\\
 i_{\partial_s}\F_q=\mathrm d(q\operatorname{Re}\Phi_q),\qquad
 i_{\partial_s}{}^*_{\g_q}\F_q
                       =\mathrm d(q\operatorname{Im}\Phi_q).
 \end{gathered}
\end{equation}
In an oriented orthonormal coframe our Hodge convention is
$ {}^*(e^0\wedge e^1)=-e^2\wedge e^3$ and
$ {}^*(e^2\wedge e^3)=e^0\wedge e^1$. Applying this convention
to \eqref{eq:charged-potential-expansion}, with $\mathrm d\omega$
the outward area form of the unit sphere, gives
\begin{equation}\label{eq:charged-flux-limits}
 \begin{aligned}
 \frac1{4\pi}\int_{S_{r'}}{}^*_{\g_q}\F_q
 &=-\frac q{4\pi}\int_{\mathbb S^2}
       (r')^2\partial_{r'}\operatorname{Re}\Phi_q\,\mathrm d\omega
       +O((r')^{-1})
   =Q_q+O((r')^{-1}),\\
 \frac1{4\pi}\int_{S_{r'}}\F_q
 &=\frac q{4\pi}\int_{\mathbb S^2}
       (r')^2\partial_{r'}\operatorname{Im}\Phi_q\,\mathrm d\omega
       +O((r')^{-1})
   =O((r')^{-1}).
 \end{aligned}
\end{equation}
Since the Maxwell equations \eqref{eq:Einstein-Maxwell} make
both fluxes independent of the sphere, letting $r'\to\infty$
in \eqref{eq:charged-flux-limits} proves the charge formulas
in Proposition~\ref{prop:charging} for $q>0$. To include $q=0$,
evaluate the fluxes on a fixed compact sphere $S_r$ and use
the continuity of $\g_q$ and $\F_q$. Their limiting values
are $Q_0=1$ and zero, respectively.

\subsection{The two area inequalities}\label{subsec:area-comparison}

We now reduce Theorem~\ref{thm:kerr-end} to two area inequalities
for outer spheres in the Harrison family. Null focusing at
$q=0$ will give an upper bound, with equality determining a Kerr
end. The opposite bound follows from the electric charge on
compact maximal hypersurfaces. We state the two results here
and prove them in Sections~\ref{sec:residue} and~\ref{sec:bands}.

For a spacelike two-sphere $C$ in $(\M,\g_q)$, choose an
orthonormal normal frame $(T_0,\nu)$ with $T_0$ future timelike
and $\nu$ outward spacelike. With $\D^{(q)}$ the connection of $\g_q$ and $e_1,e_2$
an orthonormal tangent frame, set
\begin{equation}\label{eq:null-expansions}
 \begin{aligned}
 H&=\sum_{A=1}^2\g_q(\D^{(q)}_{e_A}\nu,e_A),&
 p&=-\sum_{A=1}^2\g_q(\D^{(q)}_{e_A}T_0,e_A),\\
 \theta_+&=H-p,&\theta_-&=-H-p.
 \end{aligned}
\end{equation}
Thus $\theta_\pm$ are the expansions along $T_0\pm\nu$.
We call the sphere strictly outward untrapped if $H>|p|$,
or equivalently $\theta_+>0>\theta_-$. A change of normal frame preserving
the time and outward orientations rescales $T_0\pm\nu$ by
positive factors, preserving these signs. The sphere therefore has
positive mean curvature as the outer boundary of any
spacelike hypersurface.

The upper bound gives the following consequence when the end
is not Kerr.

\begin{proposition}\label{prop:end-spheres}
Suppose a stationary vacuum end satisfies \eqref{eq:AF},
has $M=1$, and is not Kerr near infinity.
There are $\delta\in(0,1)$ and a smooth family of spacelike
spheres $C_q$, $\delta\le q\le1$, such that
\[
 H_q>|p_q|,\qquad |C_\delta|_{\g_\delta}<4\pi(1-\delta^2).
\]
Their stationary projections are embedded spheres enclosing a
fixed inner sphere of the end. The family can be chosen in
any sufficiently far end region. If the original end contains
the asymptotic portion of the maximal hypersurface
$\overline\Sigma_1$, then $C_1$ can be joined to a sphere in
$\overline\Sigma_1$ by a path of strictly outward untrapped
spheres in $\g_1=\g$ with the same enclosing property.
\end{proposition}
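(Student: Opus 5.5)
The plan is to work in the $q=0$ limiting geometry, build the spheres $C_0$ there by null cones from the completed point at infinity, and then transport them to $q>0$ by continuity together with a first-order correction modelled on Schwarzschild. First I would use the Beig--Simon conformal completion of the orbit metric $\gamma$ and the limiting connection $\vartheta_0$ of \eqref{eq:connection-interpolation}. Together they should extend the rescaled metric $\g_0$ near infinity so that infinity becomes a timelike line $\ell$. For a fixed far parameter, I would take the future null cone from one point of $\ell$ and the past null cone from a later point, and let $C$ be their intersection. Along the past cone, Raychaudhuri's equation in the form displayed in Section~\ref{subsec:ideas} gives $|C|_{\g_0}\le4\pi$, with $\theta_+\ge0\ge\theta_-$.

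If equality held for all such cuts in some end region, the shear, the expansion and the transverse Maxwell components would vanish. Maxwell's equations would then give $\partial_r^2\Phi_0=0$ along the generators. The asymptotics \eqref{eq:ernst-end-expansion}, fed through $\Phi_0=-(1+\sigma)/(1-\sigma)$, would force $\Phi_0=-r-iS\cdot\omega$. By \eqref{eq:inverse-harrison} this determines $\sigma$, and with it $\gamma$ and $\vartheta$, which I would show is the Kerr end with mass $1$ and spin $S$. Since the end is assumed not Kerr, some cut $C_0$ in any far region satisfies $|C_0|_{\g_0}<4\pi$. This rigidity-at-equality step is the one I expect to be the main obstacle. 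The difficulty is that it requires recovering the full metric, and not only $\sigma$, from the optical data.

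The next step is to deform $C_0$ to a family $C_q$, $q\in[0,1]$, using the same cone construction in each $\g_q$. The metrics depend smoothly on $q$ by Proposition~\ref{prop:charging}, and in the rescaled picture they should do so uniformly near $\ell$. By continuity, and since $4\pi(1-q^2)\to4\pi$, we get $|C_\delta|_{\g_\delta}<4\pi(1-\delta^2)$ for small $\delta$.

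The expansion inequalities at $q=0$ are only weak, so strict untrapping for $q>0$ needs a first-order gain. I would compute $\partial_q\theta_\pm$ at $q=0$ and compare it with the Schwarzschild family \eqref{eq:schwarzschild-family}. There $\g_q$ multiplies the sphere part by $(1+qx)^2$, which gives $\partial_q\theta_+>0>\partial_q\theta_-$ at order $1/x$. The remainder terms are controlled by the asymptotic decay \eqref{eq:charged-asymptotic-data}, so the Schwarzschild gain dominates far out. Then $H_q>|p_q|$ for $q\in(0,\delta]$.

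For $q\in[\delta,1]$, I would instead use large coordinate-type spheres. For those, \eqref{eq:charged-asymptotic-data} gives $H\sim2/r'$ and $p=O((r')^{-2})$ once the sphere is far enough out. The two pieces of the family would be joined by moving outward, which is possible because the family may be chosen in any sufficiently far end region. Taking stationary projections, these spheres are graphs over round spheres, so they enclose a fixed inner sphere.

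Finally, for $q=1$ I would connect $C_1$ to the coordinate sphere $S_r\subset\overline\Sigma_1$. Both sit far out, and by \eqref{eq:slice-asymptotic-data} we have $k=O(r^{-3})$, so $p=O(r^{-3})$ while $H\approx2/r$. A straight-line homotopy in the graph function, taken over a large radius, therefore stays strictly outward untrapped and keeps the enclosing property.
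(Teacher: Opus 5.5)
Your proposal has a genuine gap in establishing $H_q>|p_q|$ on all of $[\delta,1]$.

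\emph{Small $q$.} Your first-order comparison with Schwarzschild at $q=0$ controls a cut at radius $R$ only while $qR\ll1$. In the model, $H=2q\sqrt{R^2-1}/(1+qR)^2$. So the gain $\partial_qH|_{q=0}=2\sqrt{R^2-1}$ is of order $R$, not $1/x$, but $\partial_q^2H|_{q=0}$ is of order $R^2$. The natural parameter is $qR$, so your Taylor argument forces $\delta R\ll1$.

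\emph{Large $q$.} Here \eqref{eq:charged-asymptotic-data} is an expansion at fixed $q>0$ in $r'=qr$, with constants that degenerate as $q\to0$. It gives untrapped coordinate spheres only where $qr\gg1$.

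\emph{The join.} Joining the cone cut at radius $R$ to large coordinate spheres ``by moving outward'' in $\g_\delta$ must cross the radii $r\sim1/\delta$, and neither estimate applies there. Appealing to ``any sufficiently far end region'' is circular and does not help, since the crossover moves with the radius. Coordinate spheres also cannot replace the cuts near the bottom, because at $q=0$ their $H$ and $p$ are only $O(1/r)$, with no sign in general.

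\emph{What the paper does.} The missing idea is an estimate uniform in $qR$. The paper rescales by $L=1+qR$ and sets $\lambda=qR/L\in[0,R/(1+R)]$. It keeps the cone cut as a fixed immersion in the coordinates $(y^0,z,\omega)$ of \eqref{eq:fixed-cone-path}. It then proves $L^{-2}\g_q=\g^{\mathrm{mod}}+O_{C^m}(R^{-1})$, together with one $\lambda$-derivative, uniformly for $q\in[0,1]$; this is \eqref{eq:uniform-annulus}, obtained from \eqref{eq:annulus-denominator-identity} and \eqref{eq:radial-connection-bounds}. Integrating $\partial_\lambda(LH)-|\partial_\lambda(Lp)|\ge2\sqrt{1-R^{-2}}-C/R$ from $\lambda=0$, where $H\ge|p|$ by focusing, gives $L(H-|p|)\ge c\lambda>0$ for every $q\in(0,1]$ along a single path, as in \eqref{eq:strict-cone-path}. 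The threshold $\delta$ is then chosen only to keep the area deficit.

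There are several related points. First, the cone construction does not exist in $\g_q$ for $q>0$. By \eqref{eq:annulus-denominator-identity}, the spatial part of $\Omega\g_q$ is $(q^2\Omega^{-1}+1+q^2+2q\widehat m\,\Omega^{-1/2})\widehat\gamma$, which blows up at $i^0$. So the rescaled metrics are not uniform near the line at infinity, and you must keep the $q=0$ cut fixed; continuity of its area is then fine. The same non-uniformity is why \eqref{eq:charged-asymptotic-data} cannot control $\partial_q(H,p)$ at $q=0$. That comparison needs the completed data and the smallness \eqref{eq:cone-cut-smallness} of the cut.

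Second, the extension of $\vartheta_0$ across $i^0$ is not automatic. It rests on the cancellation of conformal factors in \eqref{eq:limiting-connection}.

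Third, the obstacle you flag in the equality case is resolved in Lemma~\ref{lem:optical-equality} without separate information about $\gamma$:
\begin{itemize}
\item Equality at a single cut forces $A\equiv1$, hence $\chi=0$ and $\F_0(L,e_A)=0$ from the cut to the vertex.
\item Stationary translates of the cone give coordinates $(u,r,\omega)$ in which $\g_0$ has the form \eqref{eq:equality-metric-Maxwell-form}.
\item After fixing the affine origin by $r=-\operatorname{Re}\Phi_0$, Maxwell's equations give $a=-*_{\gcirc}\mathrm dc$ and $f_0=|\Phi_0|^2-1$.
\item The Harrison transformation preserves the orbit metric, so $\gamma=(\mathrm dr-a)^2+f_0\gcirc$ can be read off directly. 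Combined with $\sigma=(\Phi_0+1)/(\Phi_0-1)$ and the vacuum twist equation, this gives Kerr.
\end{itemize}
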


To obtain the opposite inequality, we must extend the spheres
to compact maximal hypersurfaces with inner boundary $\mathcal S_0$
by continuation from the given maximal slice.
Let $\pi:\DOC\simeq\mathbb R\times\Sigma_1\to\Sigma_1$
be the stationary projection of Lemma~\ref{lem:stationary-product}.

\begin{proposition}\label{prop:band}
Fix a vacuum spacetime satisfying GR, SBS, and maximality of
$\Sigma_1$, and its normalized Harrison family $\g_q$.
Let $q:[0,1]\to[0,1]$ and $C_s$ be smooth, with $q(0)=1$
and $C_s$ embedded spacelike spheres in the fixed asymptotic end.
Assume
\[
 \g_{q(s)}(\T,\T)<0\quad\hbox{near }C_s,
 \qquad H_s>|p_s|\quad\hbox{on }C_s.
\]
Suppose $\pi(C_s)$ is embedded and bounds, with $\mathcal S_0$,
a spatial annulus $\mathcal A_s\simeq\mathbb S^2\times[0,1]$.
Assume $C_0\subset\Sigma_1$ bounds a compact truncation of
$\overline\Sigma_1$.
Then this truncation belongs to a smooth family of compact
spacelike hypersurfaces $\Sigma(s)$ such that
\[
 \begin{gathered}
 \partial\Sigma(s)=\mathcal S_0\sqcup C_s,\qquad
 \operatorname{tr}_{h_s}k_s=0,\qquad
 \Sigma(s)\simeq\mathbb S^2\times[0,1],\\
 \pi:\Sigma(s)^\circ\longrightarrow\mathcal A_s^\circ
       \quad\text{is a diffeomorphism}.
 \end{gathered}
\]
Here $h_s,k_s$ are the induced metric and second fundamental
form for $\g_{q(s)}$. Moreover,
\begin{equation}\label{eq:area-bound}
 |C_s|_{\g_{q(s)}}\ge4\pi(1-q(s)^2).
\end{equation}
\end{proposition}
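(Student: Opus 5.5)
We will prove Proposition~\ref{prop:band} by a continuity argument in $s$, followed by a minimal-surface argument for \eqref{eq:area-bound}. Let $I\subset[0,1]$ be the set of $s_0$ for which a smooth family $\Sigma(s)$, $s\in[0,s_0]$, with the listed properties exists and starts from the given truncation of $\overline\Sigma_1$. Using Lemma~\ref{lem:stationary-product}, we represent each candidate hypersurface as a graph $t=u_s(x)$ over $\mathcal A_s$. The boundary conditions are $u_s=0$ on $\mathcal S_0$ and the prescribed graph of $C_s$ on $\pi(C_s)$. The condition $\tr_{h_s}k_s=0$ for $\g_{q(s)}$ is then a quasilinear elliptic equation for $u_s$, degenerate only where the graph becomes null. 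We have $0\in I$ since $\g_{q(0)}=\g_1=\g$ and the truncation is maximal.

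\emph{Openness.} Near $s_0\in I$ we use the implicit function theorem. The linearization is the Jacobi operator $-\Delta-(|k|^2+\R^{(q)}(N,N))$ on the compact hypersurface with Dirichlet data. By the Einstein--Maxwell equations \eqref{eq:Einstein-Maxwell}, $\R^{(q)}(N,N)\ge0$ (dominant energy), so the zeroth-order term has a good sign and the operator is invertible. At $\mathcal S_0$ the boundary stays fixed. Near $\mathcal S_0$ we use the Killing horizon structure of Proposition~\ref{prop:charging} and the collar estimates of Lemma~\ref{lem:harrison-collar}, so the graphs stay smooth and spacelike up to the bifurcation sphere. Smooth dependence of $\g_q$ on $q$ then gives a nearby smooth family.

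\emph{Closedness.} This is the main obstacle, since we need uniform a priori bounds along any sequence $s_j\uparrow s_*$.

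First, the height bound. By \eqref{eq:inverse-charged}, the covector $\mathrm dG-W_q/D_q(p)$ is $\g$-timelike on a $\g_q$-spacelike graph. Integrating it along original $\g$-causal curves between points of the graph and the fixed slice bounds $|u_s|$ uniformly, as in~\cite{ChruscielWald}. Here we use global hyperbolicity of $\DOC$ and the fixed boundary data, together with the decay of $W_q$.

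Second, the gradient bound. On the compact region swept out, Lemma~\ref{prop:compact-causal} supplies a fixed time function with timelike gradient for the limiting metric $\g_{q(s_*)}$. Bartnik's interior gradient estimate~\cite{Bartnik} then gives a uniform bound on the tilt $-\g_q(N,\nabla\tau)$ of the normals in the interior, including through the ergoregion where $\T$ is not timelike.

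Third, the boundary gradient estimates. At $C_s$ we use barriers from the hypothesis that $C_s$ is spacelike, together with $\g_{q(s)}(\T,\T)<0$ nearby, which makes the graph problem uniformly elliptic there. At $\mathcal S_0$ we use the collar estimates.

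With uniform spacelikeness, Schauder theory gives $C^{k}$ bounds, and the Arzel\`a--Ascoli theorem yields a limit maximal hypersurface. The interior projection property persists because the limits are graphs, so $I$ is closed and hence $I=[0,1]$.

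\emph{Area bound.} Fix $s$ and put $q=q(s)$. Because $H_s>|p_s|$, the boundary sphere $C_s$ has positive mean curvature in $\Sigma(s)$, so it is mean convex. At the inner boundary, $\mathcal S_0$ is a bifurcation sphere of Killing horizons that persist by Proposition~\ref{prop:charging}. It is therefore totally geodesic in $\Sigma(s)$, which lets it serve as a minimal barrier. The construction of~\cite{CruzMendes} then produces a stable minimal surface $\Gamma\subset\Sigma(s)$ homologous to $C_s$ with $|\Gamma|\le|C_s|$. Maxwell's equations give $\Gamma$ the same charge $Q_q=\sqrt{1-q^2}$ and zero magnetic charge, by Proposition~\ref{prop:charging}.

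On a maximal slice, the Einstein--Maxwell constraint yields $R_h=|k|^2+2(|E|^2+|B|^2)$. The stability inequality for $\Gamma$ combined with Gauss--Bonnet then gives the area--charge inequality $|\Gamma|\ge4\pi Q_q^2$, as in \cite{Gibbons,DainJaramilloReiris}. This proves \eqref{eq:area-bound}.

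If some components of $\Gamma$ are not spheres, we sum over the components of $\Gamma$ carrying charge. Those with genus at least one force the corresponding flux to vanish, by the same stability argument.
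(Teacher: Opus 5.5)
There is a genuine gap in your closedness step. You never bound the lapse $n=-\g_q(\T,T_0)$, and the paper's compactness argument depends on that bound. Your outer-boundary claim is not correct: spacelikeness of $C_s$ together with $\g_{q(s)}(\T,\T)<0$ does not make the graph problem uniformly elliptic there. Uniform ellipticity is equivalent to a bound on the boost angle $\eta$ between the hypersurface normal $T_0$ and the normal $e_0$ of $C_s$ determined by $\T$, and a spacelike boundary sphere does not constrain $\eta$.

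This is where the paper uses $H_s>|p_s|$ in the existence part, not only for mean convexity. Along $C_s$ write $n=a\cosh\eta$ and $X_\nu=-a\sinh\eta$. The Killing equation removes the $\eta$-dependence of $\g_q(\D_\nu\T,T_0)$ and yields \eqref{eq:boundary-lapse}. Since $\Delta_\Sigma n=(|k|^2+\R(T_0,T_0))n$ and $n|_{\mathcal S_0}=0$, the maximum of $n$ is attained on $C_s$, where $\partial_\nu n\ge0$. Then \eqref{eq:boundary-boost-bound} gives $0\le -a(H_0-|p_0|)\sinh^2\eta+C$, hence $n\le C$ on all of $\Sigma(s)$.

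Without this bound your interior step also fails. Lemma~\ref{prop:compact-causal} is conditional: its hypothesis \eqref{eq:compact-proper-time} is a proper-time bound for the limiting metric $\g_{q(s_*)}$. The causal structure of that metric in the ergoregion is not inherited from $\g$, since \eqref{eq:inverse-charged} relates only graph conormals to $\g$-timelike covectors. The paper verifies the hypothesis in \eqref{eq:limit-proper-time}. It combines $-\g_j^{-1}(\mathrm dG_j,\mathrm dG_j)=n_j^{-2}\ge C^{-2}$ from \eqref{eq:translated-graph-time} with the height bound and reverse Cauchy--Schwarz. Only then is there a temporal function to feed into Bartnik's estimate through the ergoregion.

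Positivity of $n$ is also needed for the graph property. The maximum principle plus a properness argument shows that each $\Sigma(s)$ is a stationary graph over $\mathcal A_s^\circ$. The strong maximum principle for $n_*$ shows the limit is one too. You assume the graph representation instead, but a smooth limit of graphs can fail to be a graph where $n_*=0$.

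Finally, the condition $u_s=0$ on $\mathcal S_0$ is not available. $\T$ is tangent to $\mathcal S_0$, and the stationary height there depends on the tilt of the hypersurface. The paper controls this region in corotating coordinates with the radial barriers $b_\pm$, which yield the wedge \eqref{eq:wedge}. That is what your appeal to ``the collar estimates'' would have to supply.
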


\begin{proof}[Proof of Theorem~\ref{thm:kerr-end}]
Suppose the vacuum end is not Kerr and take the path $C_q$,
$\delta\le q\le1$, supplied by Proposition~\ref{prop:end-spheres}.
To start this path on the original maximal slice, first traverse
the joining path at $q=1$ from the same proposition, and then
follow $C_q$ down to $q=\delta$. After a smooth
reparametrization, Proposition~\ref{prop:band} applies and gives
$|C_\delta|_{\g_\delta}\ge4\pi(1-\delta^2)$, contradicting
the strict inequality in Proposition~\ref{prop:end-spheres}.
\end{proof}

\section{Null focusing at the limiting end}
\label{sec:residue}

We prove Proposition~\ref{prop:end-spheres}. Intersecting future
and past null cones from the conformal boundary of $\g_0$ will
give outer spheres of area at most $4\pi$. Equality forces the
shear and transverse Maxwell field to vanish and determines the
original end as Kerr. Otherwise we deform a cut to positive
$q$, preserving the strict area deficit while making its outward
null-expansion signs strict.

\subsection{Completion and cone cuts}

Our use of null focusing is related to the rigidity arguments
of Galloway and Graf~\cite{GallowayGraf} for asymptotically
$AdS_2\times\mathbb S^2$ spacetimes, with a timelike conformal
boundary as in Galloway, Graf, and Ling~\cite{GallowayGrafLing}.
We need only a local construction near the completed stationary
end. In the static case one can instead use level sets of the
potential in the cylindrical metric $(1-f)^2\gamma/(4f)$, up to
constant scale, as in Agostiniani and Mazzieri~\cite{AgostinianiMazzieri}.
The null cones serve this purpose also when the twist is nonzero.

Put
\[
 \Omega=\frac{|1-\sigma|^2}{4f},\qquad
 \widehat\gamma=\Omega^2\gamma.
\]
At $q=0$, formula \eqref{eq:charged-connection} gives
\begin{equation}\label{eq:optical-completion}
 \overline\g=\Omega\g_0
       =-(\mathrm d\tau+\vartheta_0)^2+\widehat\gamma.
\end{equation}
Beig and Simon~\cite{BeigSimon} complete the orbit metric
$\widehat\gamma$ by adding one point $i^0$. The remaining
issue is the connection $\vartheta_0$. We show that
$\mathrm d\vartheta_0$ extends analytically across this point.

\begin{lemma}\label{lem:optical-completion}
A mass-one asymptotically flat stationary vacuum end has a
stationary gauge in which $\overline\g$ extends analytically
across $\mathbb R\times\{i^0\}$. Here $i^0$ is the point added
to the orbit space. In normal coordinates $y$ for
$\widehat\gamma$ at $i^0$, with $\rho=|y|$,
\begin{equation}\label{eq:completion-coordinates}
 \widehat\gamma_{ij}=\delta_{ij}+O(\rho^2),\qquad
 \Omega=\rho^2+O(\rho^3),\qquad
 (\vartheta_0)_i=O(\rho),\qquad y^i(\vartheta_0)_i=0.
\end{equation}
For each remainder $R=O(\rho^k)$ in
\eqref{eq:completion-coordinates}, its normal-coordinate derivatives
satisfy $\partial_y^I R=O(\rho^{k-|I|})$ for every fixed multiindex $I$.
In particular,
\begin{equation}\label{eq:optical-normal-data}
 \overline\g=-\mathrm d\tau^2+\sum_i(\mathrm dy^i)^2+O(\rho).
\end{equation}
\end{lemma}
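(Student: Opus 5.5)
We plan to work in the orbit space near infinity and treat the two pieces of $\overline\g$ separately: the spatial metric $\widehat\gamma$ and the connection $\vartheta_0$. For $\widehat\gamma$ we invoke Beig--Simon~\cite{BeigSimon}. Their conformal factor is built from the Ernst potential, and at leading order $\Omega=|1-\sigma|^2/(4f)$ agrees with it, since $1-\sigma=2/r+O(r^{-2})$ by \eqref{eq:ernst-end-expansion} and $f\to1$. Up to this comparison, $\widehat\gamma=\Omega^2\gamma$ extends analytically across the added point $i^0$. We will choose inverted coordinates $y\approx x/r^2$, so that $\rho\approx 1/r$ and $\Omega=\rho^2+O(\rho^3)$. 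We then pass to $\widehat\gamma$-normal coordinates at $i^0$, which gives $\widehat\gamma_{ij}=\delta_{ij}+O(\rho^2)$. The stated derivative bounds on the remainders follow from analyticity. Near infinity $f>0$, so the Müller zum Hagen analyticity~\cite{MullerHagen} applies to the whole reduced system $(\gamma,\sigma)$, and hence to $\Omega$.

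For the connection, we first compute its curvature. From \eqref{eq:connection-interpolation} at $q=0$ we have $4\vartheta_0=D_0\vartheta+(W_0)_{\mathrm{sp}}$. Using \eqref{eq:W} together with $\mathrm d\vartheta$ expressed through the twist, namely $f^2\,{}^*_\gamma\mathrm d\vartheta\propto\mathrm d\psi$ from \eqref{eq:stationary-reduction} and \eqref{eq:vacuum-ernst}, we aim to show that $\mathrm d\vartheta_0$ is a combination of $\star_\gamma\mathrm d\sigma$ and $\star_\gamma\mathrm d\bar\sigma$ with coefficients rational in $\sigma$.

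The key step is then the cancellation of conformal factors referred to in the introduction as \eqref{eq:limiting-connection}. On $2$-forms in three dimensions, $\star_{\widehat\gamma}=\Omega\star_\gamma$ on one-forms. Writing everything in terms of $\Phi_0=-(1+\sigma)/(1-\sigma)$, we expect an identity of the form $\mathrm d\vartheta_0=\star_{\widehat\gamma}\mathrm d(\operatorname{Im}\Phi_0^{-1}\cdot\text{analytic})$, or more concretely $\mathrm d\vartheta_0=c\,\star_{\widehat\gamma}\mathrm d\,\operatorname{Im}(1/\Phi_0)$. Since $1/\Phi_0=-(1-\sigma)/(1+\sigma)$ is an analytic function of $\sigma$ vanishing at $i^0$, and is analytic in $y$ by Beig--Simon, the resulting $2$-form extends analytically across $i^0$. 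This computation is the main obstacle. The factors $f$, $D_0$ and $\Omega$ must cancel exactly, with no residual $\rho^{-1}$ singularity, and we would verify this first on Schwarzschild, where $\vartheta_0=0$, and on Kerr.

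Given an analytic closed $2$-form $\beta=\mathrm d\vartheta_0$ on a ball around $i^0$, we replace $\vartheta_0$ by the radial-gauge primitive
\[
\tilde\vartheta_i(y)=\int_0^1 s\,y^j\beta_{ji}(sy)\,\mathrm ds .
\]
This primitive is analytic, satisfies $y^i\tilde\vartheta_i=0$, and is $O(\rho)$. The difference $\vartheta_0-\tilde\vartheta$ is closed on the punctured ball, which is simply connected, so it equals $\mathrm d\chi$. The gauge change $\tau\mapsto\tau+\chi$ then realizes $\tilde\vartheta$, as allowed by the transformation rule after \eqref{eq:stationary-reduction}. This gauge is compatible with the radial normalization used earlier, because $y^i\partial_{y^i}$ is proportional to $\partial_r$ up to $O(\rho)$ corrections. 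If exact compatibility is needed, we instead adjust the cutoff exact form added to $W_0$.

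Finally, substituting $\widehat\gamma=\delta+O(\rho^2)$ and $\vartheta_0=O(\rho)$ into \eqref{eq:optical-completion} yields \eqref{eq:optical-normal-data}, and $\overline\g$ extends analytically across $\mathbb R\times\{i^0\}$.
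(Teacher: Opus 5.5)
\textbf{The gap is in the step you flag as the main obstacle, and the concrete route you propose for it fails.}

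Your claim that $1/\Phi_0=-(1-\sigma)/(1+\sigma)$ is analytic in $y$ at $i^0$ is false. By \eqref{eq:ernst-end-expansion}, $1-\sigma=2/r+O(r^{-2})$ with $1/r\sim\rho=|y|$. Hence $\Phi_0^{-1}=-\rho+O(\rho^2)$ is not even differentiable at $y=0$, and similarly $\psi\approx2\rho\,(S\cdot y)$. Beig--Simon analyticity concerns the rescaled Geroch--Hansen potentials $\widehat m,\widehat s$, that is, the physical potentials divided by $\sqrt\Omega$. It says nothing of this kind about $\sigma$ or $\Phi_0^{\pm1}$.

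For the same reason the guessed identity $\mathrm d\vartheta_0=c\,*_{\widehat\gamma}\mathrm d\operatorname{Im}(1/\Phi_0)$ cannot hold. Since $\operatorname{Im}(1/\Phi_0)=2\psi/|1+\sigma|^2\approx\rho\,(S\cdot y)$, its right side is $O(\rho)$ and non-analytic at $i^0$. By contrast, $\mathrm d\vartheta_0$ tends to the nonzero form $-2*_{\widehat\gamma}(S_i\,\mathrm dy^i)$ when $S\neq0$. Moreover, a form $*_{\widehat\gamma}\mathrm du$ is closed only for $\widehat\gamma$-harmonic $u$. Checking Schwarzschild and Kerr would not settle the general case.

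\textbf{The missing idea} is to rewrite the twist equation \eqref{eq:endpoint-twist} in the completed variables. Write $\Phi_0=-(\widehat m+i\widehat s)/\sqrt\Omega$ and use $\widehat m^2+\widehat s^2=1+\Omega$. Then $f_0=|\Phi_0|^2-1=\Omega^{-1}$ and $\operatorname{Im}(\overline\Phi_0\,\mathrm d\Phi_0)=|\Phi_0|^2\,\mathrm d\arg\Phi_0$. Substituting gives $\mathrm d\vartheta_0=-2(1+\Omega)\,*_{\widehat\gamma}\mathrm d\arg\Phi_0=-2*_{\widehat\gamma}(\widehat m\,\mathrm d\widehat s-\widehat s\,\mathrm d\widehat m)$, which is analytic at $i^0$.

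The singular modulus $\Omega^{-1/2}$ drops out of the phase of $\Phi_0$, not out of $\operatorname{Im}(1/\Phi_0)$. Equivalently, set $\phi_m=\sqrt\Omega\,\widehat m$ and $\phi_s=\sqrt\Omega\,\widehat s$. In the antisymmetric combination $\phi_m\,\mathrm d\phi_s-\phi_s\,\mathrm d\phi_m$ the $\mathrm d\sqrt\Omega$ terms cancel, leaving $\Omega(\widehat m\,\mathrm d\widehat s-\widehat s\,\mathrm d\widehat m)$. The remaining factor $\Omega$ then turns $*_\gamma$ into $*_{\widehat\gamma}$. This is the cancellation the paper's proof uses.

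\textbf{A milder form of the same issue affects your treatment of $\widehat\gamma$.} Leading-order agreement with the Beig--Simon conformal factor does not make this particular $\Omega$, or $\Omega^2\gamma$, analytic at $i^0$. M\"uller zum Hagen analyticity, which holds only away from $i^0$, does not help either. You need the completion in this very conformal gauge. That also yields $\Omega=\widehat m^2+\widehat s^2-1$, analytic with the Hessian normalization in \eqref{eq:completed-potential-data}.

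Once $\mathrm d\vartheta_0$ is known to be analytic, your radial primitive and stationary time change coincide with the paper's.
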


\begin{proof}
We use the analytic completion of Beig and Simon~\cite{BeigSimon},
after the stationary regularity improvement of Kennefick and
\'O Murchadha~\cite{KOM}, in the formulation of
\cite{DainInfinity}.
The completed mass and twist potentials of
Geroch and Hansen~\cite{GerochMoments,Hansen} are
\[
 \widehat m=\frac{1-f^2-\psi^2}{4f\sqrt\Omega},\qquad
 \widehat s=\frac{\psi}{2f\sqrt\Omega}.
\]
Together with $\widehat\gamma$ and $\Omega$, they are analytic
at $i^0$ and satisfy
\begin{equation}\label{eq:completed-potential-data}
 \begin{gathered}
 \Omega(i^0)=0,\quad \mathrm d\Omega(i^0)=0,\quad
 \widehat\nabla^2\Omega(i^0)=2\widehat\gamma(i^0),\\
 \widehat m(i^0)=1,\quad \widehat s(i^0)=0,\quad
 \mathrm d\widehat s(i^0)=S_i\,\mathrm dy^i,\qquad
 \widehat m^2+\widehat s^2=1+\Omega.
 \end{gathered}
\end{equation}
Here $\widehat\nabla$ is the connection of $\widehat\gamma$,
and the last derivative follows from the twist expansion
$\psi=2(S\cdot\omega)r^{-2}+O(r^{-3})$ in
\eqref{eq:ernst-end-expansion}.
See also Ace\~na~\cite{Acena} for convergent expansions from
the multipole data. On the punctured end, the definitions and
\eqref{eq:harrison} give
\begin{equation}\label{eq:completed-vacuum-potentials}
 \Phi_0=-\frac{\widehat m+i\widehat s}{\sqrt\Omega},\qquad
 f^{-1}=1+2\Omega+2\widehat m\sqrt\Omega,\qquad
 \psi=2f\widehat s\sqrt\Omega.
\end{equation}
The Hessian normalization in
\eqref{eq:completed-potential-data} gives, in normal coordinates,
$\widehat\gamma_{ij}=\delta_{ij}+O(\rho^2)$ and
$\Omega=\rho^2+O(\rho^3)$. At $q=0$, the transformed twist equation
\eqref{eq:charged-twist} reads
\begin{equation}\label{eq:endpoint-twist}
 f_0^2 *_\gamma\mathrm d\vartheta_0
       =-2\operatorname{Im}(\overline\Phi_0\,\mathrm d\Phi_0).
\end{equation}
Substituting \eqref{eq:harrison} at $q=0$ into
\eqref{eq:endpoint-twist}, we express $\mathrm d\vartheta_0$
in terms of the mass and twist potentials, as in
Dain~\cite{DainInfinity},
\begin{equation}\label{eq:limiting-connection}
 \begin{aligned}
 \mathrm d\vartheta_0
 &=-\tfrac14*_\gamma
       \left(\frac{2\psi}{f}\,\mathrm df
           +\frac{1-f^2+\psi^2}{f^2}\,\mathrm d\psi\right)\\
 &=-2*_{\gamma}\left\{
 \frac{1-f^2-\psi^2}{4f}\,
             \mathrm d\left(\frac\psi{2f}\right)
 -\frac\psi{2f}\,
             \mathrm d\left(\frac{1-f^2-\psi^2}{4f}\right)
                         \right\}\\
 &=-2*_{\gamma}\bigl\{
   \widehat m\sqrt\Omega\,\mathrm d(\widehat s\sqrt\Omega)
  -\widehat s\sqrt\Omega\,\mathrm d(\widehat m\sqrt\Omega)\bigr\}\\
 &=-2\Omega*_{\gamma}
       (\widehat m\,\mathrm d\widehat s
                      -\widehat s\,\mathrm d\widehat m)\\
 &=-2*_{\widehat\gamma}
       (\widehat m\,\mathrm d\widehat s
                      -\widehat s\,\mathrm d\widehat m).
 \end{aligned}
\end{equation}
Here $*_{\gamma}=\Omega^{-1}*_{\widehat\gamma}$ on one-forms.
It follows from the last expression in
\eqref{eq:limiting-connection} that $\mathrm d\vartheta_0$ extends
analytically to the full ball and remains closed there, by
continuity from the punctured ball. We can therefore choose
its radial primitive
\[
 (\vartheta_0)_i(y)
   =\int_0^1 t y^j(\mathrm d\vartheta_0)_{ji}(ty)\,\mathrm dt
\]
which is analytic, is $O(\rho)$, and satisfies
$y^i(\vartheta_0)_i=0$. Since the punctured ball is simply connected,
replacing the original connection by this primitive amounts
to a stationary time change. In this gauge,
\eqref{eq:completion-coordinates} holds and
\eqref{eq:optical-completion} extends analytically across $i^0$.

\end{proof}

For small $a>0$, let $C_a$ be the intersection of the future
$\overline\g$-null cone from $(-a,i^0)$ with the past
$\overline\g$-null cone from $(a,i^0)$, as in
Figure~\ref{fig:double-cones}. Smoothness and
\eqref{eq:optical-normal-data} give
\begin{equation}\label{eq:cone-geodesic-data}
 y(s,\omega)=s\omega+O(s^2),\qquad
 \tau(s,\omega)=\mp a\pm s+O(s^2)
\end{equation}
for their affinely parametrized null geodesics. Smooth dependence
of the geodesic equation on the initial direction makes each
remainder $s^2$ times a smooth function of $(s,\omega)$.
The cone equations obtained from \eqref{eq:cone-geodesic-data}
have Jacobian determinant $2+O(\rho)$ in $(\tau,\rho)$.
Their intersection is therefore a transverse embedded sphere with
\begin{equation}\label{eq:double-cone-graphs}
 \rho=a+O_{C^m}(a^2),\qquad \tau=O_{C^m}(a^2).
\end{equation}
These $C^m$ bounds are in the angular variables,
for every fixed $m$.

\begin{figure}[!htbp]
\centering
\leavevmode
\BeginAccSupp{method=escape,ActualText={Perspective diagram of two light
cones with one spatial dimension omitted. A future cone opens upward
from the lower vertex at time minus a, and a past cone opens downward
from the upper vertex at time a. They intersect in an ellipse labelled
C sub a, representing a two-sphere. Both vertices lie on the dashed
vertical time axis, the line added at physical infinity. Both cones
continue beyond their intersection.}}%
\begin{tikzpicture}[x=1cm,y=1cm,font=\normalsize,>=stealth,
  every node/.style={inner sep=3pt}]
 \path[fill=black!3]
   (0,-1.8) -- (-3.5514,2.2211)
   arc[start angle=192.8396,end angle=-12.8396,
       x radius=3.6425cm,y radius=.94cm] -- cycle;
 \path[fill=black!3]
   (0,1.8) -- (-3.5514,-2.2211)
   arc[start angle=167.1604,end angle=372.8396,
       x radius=3.6425cm,y radius=.94cm] -- cycle;
 \foreach \h in {-2.43,2.43} {
   \draw[densely dashed,black!50,line width=.4pt]
     (3.6425,\h) arc[start angle=0,end angle=180,
       x radius=3.6425cm,y radius=.94cm];
   \draw[black!65,line width=.45pt]
     (-3.6425,\h) arc[start angle=180,end angle=360,
       x radius=3.6425cm,y radius=.94cm];
 }
 \draw[line width=.6pt]
   (-3.5514,2.2211) -- (0,-1.8) -- (3.5514,2.2211);
 \draw[line width=.6pt]
   (-3.5514,-2.2211) -- (0,1.8) -- (3.5514,-2.2211);
 \draw[black!45,line width=.4pt]
   (0,-1.8) -- (2.5756,1.7653);
 \draw[black!45,line width=.4pt]
   (0,1.8) -- (-2.5756,-3.0947);
 \draw[densely dashed,black!55,->,line width=.45pt]
   (0,-3.7) -- (0,3.75) node[above,text=black] {$\tau$};
 \node[anchor=west] at (.15,3.65)
   {$\mathbb R\times\{i^0\}$};
 \node[anchor=east] at (-2.9,3.5)
   {$\overline\g=\Omega\g_0$};
 \fill (0,1.8) circle (1.7pt);
 \fill (0,-1.8) circle (1.7pt);
 \node[right,fill=white] at (.1,1.8) {$(a,i^0)$};
 \node[right,fill=white] at (.1,-1.8) {$(-a,i^0)$};
 \draw[white,line width=2.4pt] (0,0) ellipse (1.55cm and .4cm);
 \draw[densely dashed,line width=.9pt]
   (1.55,0) arc[start angle=0,end angle=180,
     x radius=1.55cm,y radius=.4cm];
 \draw[line width=1pt]
   (-1.55,0) arc[start angle=180,end angle=360,
     x radius=1.55cm,y radius=.4cm];
 \draw[thin] (1.55,0) -- (2.15,.45) -- (2.7,.45);
 \node[right] at (2.7,.45) {$C_a$};
 \draw[thin,black!65] (-2.58,1.12) -- (-3.8,1.12);
 \node[anchor=east,align=center] at (-3.8,1.12)
   {Future null cone\\from $(-a,i^0)$};
 \draw[thin,black!65] (2.58,-1.12) -- (3.8,-1.12);
 \node[anchor=west,align=center] at (3.8,-1.12)
   {Past null cone\\from $(a,i^0)$};
\end{tikzpicture}%
\EndAccSupp{}
\caption{The two null cones meet in the sphere $C_a$, with vertices
on the timelike line added at physical infinity. Focusing from the
vertices gives the area and null-expansion bounds
\eqref{eq:weak-cone-spheres} used to construct the outer boundaries
of the maximal hypersurfaces.}
\label{fig:double-cones}
\end{figure}

\subsection{The area bound and its equality case}

On the past cone use the parameter $s$ increasing from the
vertex and write $k=\partial_s$. The conformal connection
formula, using $\overline\g(k,k)=0$, gives
\begin{equation}\label{eq:cone-affine-normalization}
 \D^{(0)}_k k=-k(\log\Omega)k,\qquad
 L=-\Omega k,\qquad \D^{(0)}_L L=0,
 \qquad \frac{\mathrm dr}{\mathrm ds}=-\Omega^{-1}.
\end{equation}
Here $\D^{(0)}$ is the connection of $\g_0$, and $r$ is a
physical affine parameter along the future-directed generator
$L=\partial_r$. This affine parameter need not equal the
inverted radius $\rho^{-1}$. Equations~\eqref{eq:completion-coordinates}
and \eqref{eq:cone-geodesic-data} give $\Omega=s^2+O(s^3)$.
Integrating the last identity in \eqref{eq:cone-affine-normalization} gives
\begin{equation}\label{eq:cone-affine-asymptotics}
 r=s^{-1}+O(\log(1/s))\longrightarrow+\infty,
 \qquad s\sim r^{-1}.
\end{equation}

We compute the physical metric on a cut $s=\mathrm{const}$.
Differentiating \eqref{eq:cone-geodesic-data} in the angular
variables and using \eqref{eq:completion-coordinates} gives
\[
 \partial_Ay^i=s\partial_A\omega^i+O(s^2),\qquad
 \partial_A\tau+ (\vartheta_0)_i\partial_Ay^i=O(s^2).
\]
Substituting these tangent bounds and \eqref{eq:completion-coordinates}
into the pullback of \eqref{eq:optical-completion} yields
\begin{equation}\label{eq:cone-cut-metric}
 \begin{aligned}
 (\g_0)_{AB}
 &=\Omega^{-1}\Bigl\{
   \widehat\gamma_{ij}\partial_Ay^i\partial_By^j
   -\bigl(\partial_A\tau+(\vartheta_0)_i\partial_Ay^i\bigr)
    \bigl(\partial_B\tau+(\vartheta_0)_j\partial_By^j\bigr)
                       \Bigr\}\\
 &=\Omega^{-1}\{s^2\gcirc_{AB}+O(s^3)\}
  =\gcirc_{AB}+O(s).
 \end{aligned}
\end{equation}
Write the area measure as $A\,\mathrm d\omega$, with
$\mathrm d\omega$ the unit sphere measure. The smooth remainders
in \eqref{eq:cone-cut-metric} give $A=1+O(s)$ with bounded
first and second $s$ derivatives. Using
$\partial_r=-\Omega\partial_s$ from \eqref{eq:cone-affine-normalization}
and $s\sim r^{-1}$ from \eqref{eq:cone-affine-asymptotics}, we obtain
\begin{equation}\label{eq:cone-terminal-data}
 \begin{gathered}
 A=1+O(r^{-1}),\qquad
 \theta=-\Omega\partial_s\log A=O(r^{-2}),\\
 \partial_r\theta
   =\Omega\partial_s(\Omega\partial_s\log A)=O(r^{-3}).
 \end{gathered}
\end{equation}

For an orthonormal frame $e_1,e_2$ tangent to the cuts set
$\chi_{AB}=\g_0(\D^{(0)}_{e_A}L,e_B)$ and
$\widehat\chi_{AB}=\chi_{AB}-\tfrac12\theta\delta_{AB}$.
The Raychaudhuri equation and \eqref{eq:Einstein-Maxwell} give
\begin{equation}\label{eq:cone-focusing}
 \partial_r\theta=-\tfrac12\theta^2-|\widehat\chi|^2
                    -2\sum_{A=1}^2\F_0(L,e_A)^2.
\end{equation}
Using $\theta\to0$ and $A\to1$ from
\eqref{eq:cone-terminal-data}, we integrate
\eqref{eq:cone-focusing} towards infinity and obtain
\begin{equation}\label{eq:integrated-cone-focusing}
 \begin{aligned}
 \theta(r,\omega)
 &=-\int_r^\infty\partial_t\theta(t,\omega)\,\mathrm dt\ge0,\\
 -\log A(r,\omega)
 &=\int_r^\infty\theta(t,\omega)\,\mathrm dt\\
 &=-\int_r^\infty(t-r)\partial_t\theta(t,\omega)\,\mathrm dt\\
 &=\int_r^\infty(t-r)
     \Bigl(\tfrac12\theta^2+|\widehat\chi|^2
             +2\sum_{A=1}^2\F_0(L,e_A)^2\Bigr)(t,\omega)
       \,\mathrm dt .
 \end{aligned}
\end{equation}
The sum in the integrand is $-\partial_r\theta=O(r^{-3})$
by \eqref{eq:cone-terminal-data}, which justifies both integrations.
Its nonnegativity gives $A\le1$ on the whole cone. This is also
an area bound for a general cut $r=r(\omega)$, since changing
the cut adds multiples of $L$ to its tangent vectors and $L$
is orthogonal to the cone. Consequently its area is
$\int_{\mathbb S^2}A(r(\omega),\omega)\,\mathrm d\omega\le4\pi$.
Applying the time-reversed argument to the future cone gives
the other outward expansion sign, and hence
\begin{equation}\label{eq:weak-cone-spheres}
 \theta_+(C_a)\ge0\ge\theta_-(C_a),\qquad
 H(C_a)\ge|p(C_a)|,\qquad |C_a|_{\g_0}\le4\pi.
\end{equation}

\begin{lemma}\label{lem:optical-equality}
Under the hypotheses of Lemma~\ref{lem:optical-completion},
if $|C_a|_{\g_0}=4\pi$ for one sufficiently small $a>0$,
the original vacuum end is Kerr with mass one and angular
momentum $S$.
\end{lemma}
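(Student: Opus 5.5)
Since $A\le1$ pointwise on the past cone, equality $|C_a|_{\g_0}=4\pi$ forces $A\equiv1$ on the cut $C_a$. By \eqref{eq:integrated-cone-focusing}, the integrand then vanishes along every generator beyond the cut:
\[
 \theta=\widehat\chi=0,\qquad \F_0(L,e_A)=0 .
\]
Thus the exterior portion of the past cone is a shear-free, expansion-free null hypersurface on which the Maxwell field has no transverse component. The same holds on the future cone by the time-reversed argument. Since the cuts at other small $a'<a$ lie further out on the same generators, the conclusion holds on all of the outer part of the cone.

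The next step is to extract the potential. On the cone I would write $\mathrm d\Phi_0=i_{\partial_\tau}(\F_0+i{}^*\F_0)$. The vanishing of $\F_0(L,e_A)$, together with self-duality, makes the complex field of type "only $L\wedge N$ and $e_1\wedge e_2$ components". Combined with the Maxwell equations and $\theta=\widehat\chi=0$, the null Maxwell (transport) equations along $L$ then give $\partial_r^2\Phi_0=0$. Hence $\Phi_0=a(\omega)r+b(\omega)$ along each generator. The asymptotics \eqref{eq:completed-vacuum-potentials}, with $\widehat m(i^0)=1$, $\mathrm d\widehat s(i^0)=S_i\mathrm dy^i$, $\Omega=\rho^2+O(\rho^3)$ and $r\sim s^{-1}\sim\rho^{-1}$, give $\operatorname{Re}\Phi_0/r\to-1$ and $\operatorname{Im}\Phi_0\to-S\cdot\omega$. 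Shifting the affine origin generator by generator then yields
\[
 \Phi_0=-r-iS\cdot\omega .
\]
Via \eqref{eq:inverse-harrison}, this gives $\sigma=f+i\psi$ on the cone as an explicit function of $(r,\omega)$. This is exactly the Kerr Ernst potential $\sigma=1-2/(r+iS\cdot\omega)$ expressed in Boyer–Lindquist-type coordinates, with $r$ the radius and $\omega$ adapted to the axis $S$.

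To pass from data on one null hypersurface to the end, I would use the stationary flow. The cone is transverse to $\T$, so its $\tau$-translates foliate a neighbourhood of $\mathbb R\times\{i^0\}$ minus the line. This makes $\sigma$ determined on an open end region. Next, the cut metric is $\gcirc$ by $A\equiv1$ and shear-freeness, and the null data are $\theta=\widehat\chi=0$. I would then determine $\gamma$ and $\vartheta_0$ from $\Phi_0$ through the $q=0$ field equations: the Einstein–Maxwell equations of the Bertotti–Robinson type and the twist equation \eqref{eq:endpoint-twist}. The resulting $\g_0$ should coincide with the Cl\'ement transform of Kerr with the same $S$. Rather than integrating the full Einstein–Maxwell system, I would argue by comparison. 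Take the Harrison transform at $q=0$ of Kerr with mass $1$ and spin $S$, which has the same $\Phi_0$ on its cone. Both spacetimes are stationary and solve the same reduced elliptic system for $(\gamma,\Phi_0)$ on the orbit space; the reduced equations are a harmonic map into the appropriate target, elliptic where $f>0$, which holds near infinity. The Geroch–Hansen potentials $\widehat m,\widehat s$ of both agree to infinite order at $i^0$, because $\Phi_0$ is known on an open set through $i^0$ in the completed analytic picture. Analyticity at $i^0$ (Lemma~\ref{lem:optical-completion}) then gives equality of all multipole moments. The Beig–Simon/Kundu theorem, that multipoles determine the analytic stationary end, identifies the end with Kerr. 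Finally, the inverse transformation \eqref{eq:inverse-harrison} and \eqref{eq:connection-interpolation} at $q=1$ recover $\g$ and $\T$ as the Kerr metric and $\partial_t$.

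The main obstacle is the derivation of $\partial_r^2\Phi_0=0$ and the identification of $\Phi_0$ on the cone with its value in the orbit-space sense. The cone is not stationary, so $\Phi_0$ restricted to it must be related to $\Phi_0$ as a function on the orbit space via the projection. This requires checking that the projection of the cone is a local diffeomorphism onto a punctured neighbourhood of $i^0$. I would verify this from \eqref{eq:cone-geodesic-data}, whose projection $y=s\omega+O(s^2)$ is a diffeomorphism for small $s$. With this, knowing $\sigma$ on a punctured ball determines all Geroch–Hansen moments by analytic continuation, and the remaining steps are citations.
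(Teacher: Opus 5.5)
\paragraph{Verdict.} There is a genuine gap, in the final identification step. Your opening steps match the paper. Equality forces $A\equiv1$ on the cut, and \eqref{eq:integrated-cone-focusing} then gives $\chi=0$ and $\F_0(L,e_A)=0$ outward of the cut. The stationary translates of the cone give coordinates $(u,r,\omega)$ on a punctured end neighbourhood. Your Newman--Penrose transport argument (with $\phi_0=0$, $\rho=\kappa=0$, so $D\phi_1=0$) is a valid substitute for the paper's coordinate form of the radial Maxwell equations. The asymptotics then fix $\Phi_0=-r-iS\cdot\omega$. (Your remark about cuts for $a'<a$ lying on the ``same generators'' is inaccurate, since those cones are stationary translates, but it is not needed.)

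\paragraph{Where the comparison fails.} You know $\Phi_0$ only as a function of coordinates $(u,r,\omega)$ built from the null geodesics of the unknown metric. The Geroch--Hansen moments are built from $\widehat m,\widehat s$ \emph{together with} $\widehat\gamma$: Taylor data in $\widehat\gamma$-normal coordinates at $i^0$, with $\widehat\nabla$ and curvature corrections. A scalar function on an abstract three-manifold carries no invariant information without the metric. So the map matching your cone to the Kerr--Cl\'ement cone preserves $\Phi_0$, but you have not shown that it preserves $\gamma$. The claimed infinite-order agreement of $\widehat m,\widehat s$ at $i^0$ is therefore unjustified, and the appeal to Beig--Simon/Kundu cannot start. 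The same presupposition sits in calling $\sigma$ ``the Kerr Ernst potential in Boyer--Lindquist coordinates''. Note also that $\sigma=1-2/(r+1+iS\cdot\omega)$, so the Boyer--Lindquist radius is $R=r+1$, not $r$.

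\paragraph{The missing step.} It is the one you set aside, and it needs no integration of the Einstein--Maxwell system: in the coordinates $(u,r,x^A)$ the metric is fixed explicitly.
\begin{itemize}
\item The Killing and affine geodesic equations give $\g_0(\partial_u,\partial_r)=-1$.
\item The condition $\chi=0$, with the limit \eqref{eq:cone-cut-metric}, gives $(\g_0)_{AB}=\gcirc_{AB}$.
\item The $q=0$ norm relation $f_0=\operatorname{Re}\sigma_0+|\Phi_0|^2$ with $\sigma_0=-1$ gives $\g_0(\partial_u,\partial_u)=1-r^2-c^2$, where $c=-S\cdot\omega$.
\item The imaginary part of $\mathrm d\Phi_0=i_{\partial_u}(\F_0+i{}^*\F_0)$, with $\F_0=\mathrm dr\wedge\mathrm du$, gives $\mathrm dc=*_{\gcirc}a$ for the cross term $a_A=\g_0(\partial_u,\partial_A)$.
\end{itemize}
The inverse Harrison relation then yields the orbit metric $\gamma=(\mathrm dr-a)^2+f_0\gcirc$. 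This becomes the Kerr orbit metric after $R=r+1$ and $\mathrm d\varphi=\mathrm d\phi+J\Delta^{-1}\mathrm dR$. The vacuum twist equation determines the connection up to a stationary time change. Once this is done, the multipole comparison is superfluous; without it, the comparison has nothing to compare.
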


\begin{proof}
The inverse Harrison relation \eqref{eq:inverse-harrison}
requires $\Phi_0$ relative to the normalized stationary field
$\partial_\tau$. The Bertotti--Robinson product in
Cl\'ement's construction~\cite{Clement1998} is locally the same
for every $J$, while its stationary field depends on $J$.
We recover that dependence from
$\operatorname{Im}\Phi_0\to-S\cdot\omega$, derived below from
\eqref{eq:completed-potential-data}, and then determine $\Phi_0$
by the Maxwell equations.

Equality of the area with $4\pi$ forces $A=1$ at every point
of the cut, since $A\le1$. By \eqref{eq:integrated-cone-focusing},
we therefore have $\chi=0$ and $\F_0(L,e_A)=0$ along each
generator from the cut to the vertex. To use these identities
on an open set, translate the cone by the stationary flow.
The geodesics in \eqref{eq:cone-geodesic-data} then give the map
\[
 (u,s,\omega)\longmapsto
 \bigl(u-s+O(s^2),\ s\omega+O(s^2)\bigr),\qquad
 \partial_u=\partial_\tau.
\]
The pullback of $\mathrm d\tau\wedge\mathrm dy^1\wedge
\mathrm dy^2\wedge\mathrm dy^3$ by this map is
$s^2(1+O(s))\,\mathrm du\wedge\mathrm ds\wedge\mathrm d\omega$.
Together with the spatial derivative $I+O(s)$ in $s\omega$,
this shows that $(u,s,\omega)$ are coordinates on a punctured
neighbourhood of the end. In these coordinates the Killing and
affine geodesic equations, followed by the rescaling
$L=-\Omega k$ from \eqref{eq:cone-affine-normalization}, give
\begin{equation}\label{eq:equality-null-normalization}
 \partial_s\overline\g(\partial_u,k)=0,\qquad
 \overline\g(\partial_u,k)|_{s=0}=1,
 \qquad \g_0(\partial_u,\partial_r)=-1.
\end{equation}

Choose the additive constant in $r$ independently of $u$.
Since $\chi=0$ and the angular coordinates are constant
along each ray, the angular metric satisfies the transport
equation with limiting value supplied by \eqref{eq:cone-cut-metric}
\begin{equation}\label{eq:equality-angular-metric}
 \partial_r(\g_0)_{AB}
       =2\g_0(\D^{(0)}_{\partial_A}L,\partial_B)=0,
 \qquad \lim_{r\to\infty}(\g_0)_{AB}=\gcirc_{AB}.
\end{equation}
Using $\g_0(\partial_u,\partial_r)=-1$ from
\eqref{eq:equality-null-normalization} and the angular metric
from \eqref{eq:equality-angular-metric}, we write
\begin{equation}\label{eq:equality-metric-Maxwell-form}
 \g_0=\gcirc-2\,\mathrm du\,\mathrm dr-f_0\,\mathrm du^2
                         +2a_A\,\mathrm du\,\mathrm dx^A.
\end{equation}
Here $a$ is an angular one-form and the coefficients are independent
of $u$. We use the potential to determine $\F_0$ and fix the
remaining freedom in the affine parameter.
The inversion $y\mapsto y/|y|^2$ and the change from $s$ to
$r$ both reverse orientation, leaving the volume form
$\mathrm du\wedge\mathrm dr\wedge\mathrm d\omega$.
Thus \eqref{eq:charged-potential} and $(\F_0)_{rA}=0$ give
\[
 \begin{gathered}
 \partial_r\operatorname{Im}\Phi_0=({}^*\F_0)_{ur}
       =\tfrac12(\mathrm d\omega)^{AB}(\F_0)_{AB},\\
 \F_0=\mathrm du\wedge\mathrm d\operatorname{Re}\Phi_0
       +(\partial_r\operatorname{Im}\Phi_0)\,\mathrm d\omega.
 \end{gathered}
\]
Here angular indices are raised with $\gcirc$.
The determinant and inverse of \eqref{eq:equality-metric-Maxwell-form}
give $\sqrt{|\det\g_0|}=\sqrt{\det\gcirc}$,
$(\F_0)^{ru}=\partial_r\operatorname{Re}\Phi_0$, and
$(\F_0)^{Au}=0$. The radial Maxwell equations therefore reduce to
\begin{equation}\label{eq:equality-radial-Maxwell}
 \begin{aligned}
 0&=\partial_\mu\bigl(\sqrt{|\det\g_0|}\,(\F_0)^{\mu u}\bigr)
       =\sqrt{\det\gcirc}\,\partial_r^2\operatorname{Re}\Phi_0,\\
 0&=(\mathrm d\F_0)_{rAB}
       =(\partial_r^2\operatorname{Im}\Phi_0)(\mathrm d\omega)_{AB}.
 \end{aligned}
\end{equation}

The values and derivatives in \eqref{eq:completed-potential-data}
give $\widehat s=S\cdot y+O(\rho^2)$ and
$\widehat m=1+O(\rho^2)$. Formula~\eqref{eq:completed-vacuum-potentials}
therefore gives
\begin{equation}\label{eq:equality-potential-limits}
 \frac{\operatorname{Re}\Phi_0}{r}\longrightarrow-1,\qquad
 \operatorname{Im}\Phi_0\longrightarrow-S\cdot\omega.
\end{equation}
Indeed \eqref{eq:cone-geodesic-data} gives $\rho/s\to1$,
and \eqref{eq:cone-affine-asymptotics} gives $rs\to1$
along each ray.
Equations~\eqref{eq:equality-radial-Maxwell} make $\Phi_0$
affine along each ray, and \eqref{eq:equality-potential-limits}
forces $\partial_r\Phi_0=-1$. We may therefore fix the affine
origin by $r=-\operatorname{Re}\Phi_0$. This change is independent
of $u$ and preserves $\partial_r$ and the normalization
\eqref{eq:equality-null-normalization}. It adds multiples of the
null generator to the angular tangents, so it also preserves
the metric \eqref{eq:equality-angular-metric}. We obtain
\begin{equation}\label{eq:equality-angular-potential}
 \Phi_0=-r+ic,\qquad c=-S\cdot\omega,\qquad
 \F_0=\mathrm dr\wedge\mathrm du.
\end{equation}
The inverse metric now gives $(\F_0)^{rA}=a^A$, so the angular
part of $\mathrm dc=i_{\partial_u}{}^*\F_0$ reads
$\mathrm dc=*_{\gcirc}a$. Applying $*_{\gcirc}^2=-1$, and using
\eqref{eq:harrison} at $q=0$, determines both remaining coefficients,
\begin{equation}\label{eq:equality-endpoint-data}
 a=-*_{\gcirc}\mathrm dc,\qquad
 f_0=|\Phi_0|^2-1=r^2+c^2-1.
\end{equation}

Rotating the angular coordinates so that $S=(0,0,J)$, we have
\[
 c=-J\cos\theta,\qquad
 a=-J\sin^2\theta\,\mathrm d\phi.
\]
Put $R=r+1$ and $\Delta=R^2-2R+J^2$.
Applying the inverse
Harrison relation \eqref{eq:inverse-harrison} to
\eqref{eq:equality-angular-potential}, and extracting the unchanged
orbit metric from \eqref{eq:equality-metric-Maxwell-form}, gives
\begin{equation}\label{eq:equality-inverse-data}
 \sigma=\frac{\Phi_0+1}{\Phi_0-1}
          =1-\frac2{R+iJ\cos\theta},\qquad
 \gamma=(\mathrm dr-a)^2+f_0\gcirc.
\end{equation}
Using $f_0=R^2-2R+J^2\cos^2\theta$ and the angular change
$\mathrm d\varphi=\mathrm d\phi+J\Delta^{-1}\mathrm dR$,
we complete the square in \eqref{eq:equality-inverse-data},
\begin{equation}\label{eq:equality-orbit-metric}
 \begin{aligned}
 \gamma
 &=f_0\,\mathrm d\theta^2+\mathrm dR^2
       +2J\sin^2\theta\,\mathrm dR\,\mathrm d\phi
       +\Delta\sin^2\theta\,\mathrm d\phi^2\\
 &=f_0\left(\frac{\mathrm dR^2}{\Delta}
                    +\mathrm d\theta^2\right)
       +\Delta\sin^2\theta
          \left(\mathrm d\phi+\frac J\Delta\mathrm dR\right)^2\\
 &=f_0\left(\frac{\mathrm dR^2}{\Delta}
                    +\mathrm d\theta^2\right)
                  +\Delta\sin^2\theta\,\mathrm d\varphi^2.
 \end{aligned}
\end{equation}
Taking real and imaginary parts of the Ernst potential in
\eqref{eq:equality-inverse-data} gives the norm and twist
\begin{equation}\label{eq:equality-vacuum-potentials}
 f=\frac{f_0}{R^2+J^2\cos^2\theta},\qquad
 \psi=\frac{2J\cos\theta}{R^2+J^2\cos^2\theta}.
\end{equation}
Substituting \eqref{eq:equality-vacuum-potentials} into the first
identity in \eqref{eq:stationary-orbit-form-identities}, with the
Hodge star of \eqref{eq:equality-orbit-metric}, determines the
stationary connection through
\begin{equation}\label{eq:equality-vacuum-connection}
 \begin{aligned}
 \mathrm d\left(\frac{2JR\sin^2\theta}{f_0}\,\mathrm d\varphi\right)
 &=\frac{2J\sin\theta}{f_0^2}
   \Bigl[-\sin\theta(R^2-J^2\cos^2\theta)\,\mathrm dR
         \\
 &\hspace{7em}{}+2R\Delta\cos\theta\,\mathrm d\theta\Bigr]
                                      \wedge\mathrm d\varphi\\
 &=f^{-2}\sin\theta\bigl(\psi_\theta\,\mathrm dR
                 -\Delta\psi_R\,\mathrm d\theta\bigr)
                                      \wedge\mathrm d\varphi\\
 &=-f^{-2}*_{\gamma}\mathrm d\psi.
 \end{aligned}
\end{equation}
By \eqref{eq:equality-vacuum-connection}, the difference from
the original vacuum connection is closed and hence exact on
the exterior sphere region. After a stationary time change,
substituting this connection, \eqref{eq:equality-vacuum-potentials},
and \eqref{eq:equality-orbit-metric} into
\eqref{eq:stationary-reduction} gives
\[
 \begin{split}
 \g={}&-\frac{f_0}{R^2+J^2\cos^2\theta}
       \left(\mathrm dt+\frac{2JR\sin^2\theta}{f_0}
                                      \mathrm d\varphi\right)^2\\
 &+(R^2+J^2\cos^2\theta)
       \left(\frac{\mathrm dR^2}{\Delta}+\mathrm d\theta^2
             +\frac{\Delta\sin^2\theta}{f_0}\,\mathrm d\varphi^2\right).
 \end{split}
\]
This is the Kerr metric with mass one and angular momentum $S$.
\end{proof}

\subsection{Deforming the cuts}

For a non-Kerr end, Lemma~\ref{lem:optical-equality} gives
$|C_a|_{\g_0}<4\pi$, and continuity preserves the area deficit
for small positive $q$. We must also make $H-|p|$ strictly
positive, however small $q$ is. The Schwarzschild spheres in
\eqref{eq:schwarzschild-family} show how the deformation supplies
this strict inequality. For the sphere $x=R$, put $L=1+qR$ and
$\lambda=qR/L$. Its induced metric is $L^2\gcirc$, and
\begin{equation}\label{eq:schwarzschild-expansions}
 LH=\left.\frac{L\sqrt{x^2-1}}{1+qx}
          \partial_x\log(1+qx)^2\right|_{x=R}
       =2\lambda\sqrt{1-R^{-2}},\qquad p=0.
\end{equation}
To preserve this gain for the actual cuts, we need an error
of order $\lambda/R$, which we obtain by comparing parameter
derivatives and integrating from $\lambda=0$.

Return to the inverted radius $r=\rho^{-1}$ and set $R=a^{-1}$.
In Schwarzschild, $\rho=\operatorname{arccoth}x$ and hence
$x=r+O(r^{-1})$.
Write the cone cut from \eqref{eq:double-cone-graphs} as
\begin{equation}\label{eq:cone-cut-smallness}
 r=Re^{u_R(\omega)},\qquad \tau=R^{-1}v_R(\omega),\qquad
 \|u_R\|_{C^m}+\|v_R\|_{C^m}\le C_mR^{-1}.
\end{equation}
For fixed large $R$, define $C_{R,q}$ by
\begin{equation}\label{eq:fixed-cone-path}
 \omega\longmapsto
 \left(\frac{L^2}{R}v_R(\omega),
                         Re^{u_R(\omega)},\omega\right).
\end{equation}
On $x=R$, the Schwarzschild metric has
$L^{-2}(\g_q)_{\tau\tau}=-(R^2-1)/L^4$, which selects the
time scale $L^2/R$. In coordinates
\[
 z=\log(r/R),\qquad y^0=R\tau/L^2,
\]
the immersion is fixed, $X=(v_R,u_R,\omega)$. Expressing
\eqref{eq:schwarzschild-family} in $x=Re^z$ gives the rescaled
comparison metric
\begin{equation}\label{eq:uniform-annulus}
 \begin{aligned}
 \g^{\mathrm{mod}}
 ={}&-\frac{e^{2z}-R^{-2}}{(1-\lambda+\lambda e^z)^2}
                      (\mathrm dy^0)^2\\
 &+(1-\lambda+\lambda e^z)^2
       \left(\frac{\mathrm dz^2}{1-R^{-2}e^{-2z}}+\gcirc\right),\\
 L^{-2}\g_q={}&\g^{\mathrm{mod}}+O_{C^m}(R^{-1}).
 \end{aligned}
\end{equation}
We prove the remainder bound also after one $\lambda$ derivative,
on $|z|\le\log2$ and uniformly for $0\le q\le1$.
The coefficient norms use these coordinates and angular charts,
with parameter derivatives taken at fixed $(z,y^0,\omega)$.

We first bound the two connections in their radial gauges.
The Jacobian from the inverted Cartesian coordinates $y/\rho^2$ back to $y$ is
$r^{-2}(I-2\omega\otimes\omega)$, with $\omega=y/\rho$.
Since $I-2\omega\otimes\omega$ is orthogonal, substituting
\eqref{eq:completion-coordinates} into
$\gamma=\Omega^{-2}\widehat\gamma$ gives, in those Cartesian coordinates,
\begin{equation}\label{eq:inverted-orbit-bounds}
 \partial^I(\gamma_{ij}-\delta_{ij})=O(r^{-1-|I|})
       \qquad(|I|\le m).
\end{equation}
Equations~\eqref{eq:completed-potential-data} and
\eqref{eq:completed-vacuum-potentials} give $f=1-2/r+O_m(r^{-2})$ and
$\psi=2(S\cdot\omega)r^{-2}+O_m(r^{-3})$ in the same chart.
Here $m$ is any fixed nonnegative integer, and the derivative
bounds follow from Taylor expansion of the analytic completed
fields $\widehat\gamma,\Omega,\widehat m,\widehat s$.
Thus \eqref{eq:completion-coordinates} bounds $\vartheta_0$,
and integrating the vacuum twist equation
\eqref{eq:stationary-orbit-form-identities} from infinity bounds
$\vartheta_1$ in the independent radial gauges of
Section~\ref{sec:charging},
\begin{equation}\label{eq:radial-connection-bounds}
 \begin{gathered}
 (\vartheta_0)_\rho=(\vartheta_1)_\rho=0,\qquad
 (\vartheta_0)_A=O(\rho^2),\\
 (\vartheta_1)_A
  =\int_r^\infty f^{-2}(*_\gamma\mathrm d\psi)_{sA}\,\mathrm ds
  =\int_r^\infty O(s^{-2})\,\mathrm ds=O(\rho).
 \end{gathered}
\end{equation}
The integrand and these bounds retain their orders after any
fixed number of $r\partial_r$ and angular derivatives.

To verify \eqref{eq:uniform-annulus}, use $\Phi_0$ from
\eqref{eq:completed-vacuum-potentials} and the identity
$\widehat m^2+\widehat s^2=1+\Omega$ in
\eqref{eq:completed-potential-data} to obtain
\begin{equation}\label{eq:annulus-denominator-identity}
 \frac{D_q}{4f}
  =\Omega|1-q\Phi_0|^2
  =q^2+(1+q^2)\Omega+2q\widehat m\sqrt\Omega.
\end{equation}
Inserting $\widehat m=1+O(\rho^2)$ from
\eqref{eq:completed-potential-data} and
$\Omega/\rho^2=1+O(\rho)$ from \eqref{eq:completion-coordinates}
into \eqref{eq:annulus-denominator-identity}, and using
\eqref{eq:radial-connection-bounds} in the connection formula
\eqref{eq:charged-connection}, we obtain
\begin{equation}\label{eq:annulus-coefficient-bounds}
 \begin{aligned}
 \frac{D_q}{4fL^2\rho^2}
 ={}&(1-\lambda)^2\frac\Omega{\rho^2}
  +2\lambda(1-\lambda)e^z\widehat m\frac{\sqrt\Omega}{\rho}
  +\lambda^2e^{2z}
  +\frac{\lambda^2}{R^2}\frac\Omega{\rho^2}\\
 ={}&(1-\lambda+\lambda e^z)^2+O_{C^m}(R^{-1}),\\
 \frac R{L^2}\vartheta_q
 ={}&R\left(1-\lambda-\frac\lambda R\right)^2\vartheta_0
       +\lambda(1-\lambda)\vartheta_1
       =O_{C^m}(R^{-1}).
 \end{aligned}
\end{equation}
At fixed $z$, the fields $\Omega,\widehat m,\vartheta_0,\vartheta_1$
are independent of $\lambda$, since $\rho=R^{-1}e^{-z}$.
The remaining factors in \eqref{eq:annulus-coefficient-bounds}
are quadratic polynomials in $\lambda$, so one $\lambda$ derivative
preserves the stated $O_{C^m}(R^{-1})$ remainders.
Since $1-\lambda+\lambda e^z\ge1/2$ and
$1-R^{-2}e^{-2z}=1+O_{C^m}(R^{-2})$, substitution in
\eqref{eq:charged-connection}, using
\eqref{eq:completion-coordinates}, proves
\eqref{eq:uniform-annulus} with its parameter derivative.

Write $\widetilde\g=L^{-2}\g_q$. With $h=X^*\widetilde\g$,
the mean curvature vector is
\begin{equation}\label{eq:cone-mean-curvature-vector}
 (\vec H)^\mu=h^{AB}\left(
 \partial_A\partial_BX^\mu
 -\Gamma(h)^C_{AB}\partial_CX^\mu
 +\Gamma^\mu_{\alpha\beta}
                \partial_AX^\alpha\partial_BX^\beta\right),
\end{equation}
where $\Gamma$ is the connection of $\widetilde\g$.
For $\g^{\mathrm{mod}}$ and $X_0=(0,0,\omega)$, the induced
metric is $\gcirc$, and the curvatures are given by
\eqref{eq:schwarzschild-expansions}.

For the actual cut, put $e_A=\partial_AX$ and choose normal vectors
\[
 \begin{aligned}
 V&=\partial_{y^0}
       -h^{AB}\widetilde\g(\partial_{y^0},e_A)e_B,\\
 N&=\partial_z-h^{AB}\widetilde\g(\partial_z,e_A)e_B
       -\frac{\widetilde\g(\partial_z,V)}{\widetilde\g(V,V)}V.
 \end{aligned}
\]
By \eqref{eq:uniform-annulus} and
\eqref{eq:cone-cut-smallness}, for large $R$,
\[
 \begin{gathered}
 h\ge\tfrac12\gcirc,\qquad
 |\det\widetilde\g|\ge\tfrac12\det\gcirc,\\
 -\widetilde\g(V,V),\ \widetilde\g(N,N)\ge\tfrac12
 \end{gathered}
\]
along $X$. The same bounds hold when the metric coefficients
and the immersion are interpolated linearly with the model. Set
$T_0=V/\sqrt{-\widetilde\g(V,V)}$ and
$\nu=N/\sqrt{\widetilde\g(N,N)}$, with future and outward
orientations. Then
$LH=-\widetilde\g(\vec H,\nu)$ and
$Lp=\widetilde\g(\vec H,T_0)$.
The formulas for $LH,Lp$ involve the metric and its first
derivatives and the first two derivatives of $X$. Their first
two derivatives with respect to these coefficients are bounded
by \eqref{eq:uniform-annulus} and the preceding lower bounds.
Since $\partial_\lambda X=0$, the chain rule differentiates
only the metric coefficients. Applying the mean value estimate
to these formulas gives
\begin{equation}\label{eq:cone-curvature-comparison}
 \begin{aligned}
 &\|(LH-2\lambda\sqrt{1-R^{-2}},Lp)\|_{C^1_\lambda C^0_\omega}\\
 &\qquad\le C\left(
   \|\widetilde\g-\g^{\mathrm{mod}}\|_{C^1_\lambda C^2}
            +\|u_R\|_{C^3}+\|v_R\|_{C^3}\right)\le C/R.
 \end{aligned}
\end{equation}
Here $C$ is independent of $R$, and the parameter norm uses
$0\le\lambda\le R/(1+R)$. The derivative part of
\eqref{eq:cone-curvature-comparison} is
\begin{equation}\label{eq:cone-expansion-variation}
 \partial_\lambda(LH)=2\sqrt{1-R^{-2}}+O(R^{-1}),\qquad
 \partial_\lambda(Lp)=O(R^{-1}).
\end{equation}
Integrating \eqref{eq:cone-expansion-variation} from
$\lambda=0$ and applying the triangle inequality to $Lp$, we obtain
\begin{equation}\label{eq:strict-cone-path}
 \begin{aligned}
 L(H-|p|)&\ge(H-|p|)_{q=0}
       +\int_0^\lambda
          \bigl(\partial_\mu(LH)-|\partial_\mu(Lp)|\bigr)
                                      \,\mathrm d\mu\\
        &\ge(H-|p|)_{q=0}
       +\bigl(2\sqrt{1-R^{-2}}-C/R\bigr)\lambda\\
        &\ge\bigl(2\sqrt{1-R^{-2}}-C/R\bigr)\lambda>0
                     \qquad(q>0),
 \end{aligned}
\end{equation}
In the integral, $R$ is fixed and $L,q,H,p$ are evaluated
at parameter $\mu$ along \eqref{eq:fixed-cone-path}.
The initial term is nonnegative by \eqref{eq:weak-cone-spheres}.
Fixing $R$ sufficiently large in \eqref{eq:strict-cone-path},
we have $H>|p|$ along the path \eqref{eq:fixed-cone-path}
for every $q>0$. We can then choose $\delta>0$ small enough
that continuity preserves the area gap at $q=0$ in the form
$|C_{R,\delta}|<4\pi(1-\delta^2)$.

To join $C_{R,1}$ to the original maximal slice, we compute
that slice's height in the radial gauge. Begin with the time
for which it is $t=0$. By \eqref{eq:stationary-reduction},
$\vartheta_i=-\g_{0i}/f$ and
$\gamma_{ij}=f\g_{ij}+\g_{0i}\g_{0j}$, so
\eqref{eq:AF} gives
$|(\nabla^\gamma)^m\vartheta|_\gamma=O(r^{-2-m})$ for
$0\le m\le3$, where $\nabla^\gamma$ is the connection of $\gamma$.
These tensor bounds hold in the inverted chart because both
radii are comparable to the $\gamma$-distance from a fixed
compact set. Using $|\partial_r|_\gamma=1+O(r^{-1})$ and
\eqref{eq:inverted-orbit-bounds} to convert angular derivatives
to covariant derivatives gives
\begin{equation}\label{eq:radial-gauge-derivatives}
 \|\vartheta_r(r,\cdot)\|_{C^3(\mathbb S^2)}
 \le C\sum_{m=0}^3 r^m
       \sup_{\rho=r^{-1}}|(\nabla^\gamma)^m\vartheta|_\gamma
 \le Cr^{-2}.
\end{equation}
Set $t'=t+\chi$, with
\[
 \chi(r,\omega)=-\int_r^\infty\vartheta_r(s,\omega)\,\mathrm ds.
\]
Then $\partial_r\chi=\vartheta_r$ makes $\vartheta'_r=0$
by the gauge law following \eqref{eq:stationary-reduction},
and integration of \eqref{eq:radial-gauge-derivatives} gives
\begin{equation}\label{eq:original-slice-height}
 t'=\chi(r,\omega),\qquad
 \|\chi(r,\cdot)\|_{C^3(\mathbb S^2)}\le Cr^{-1}.
\end{equation}
Returning to the radial time coordinate denoted by $t$,
which equals $\tau$ at $q=1$, the required joining path is
\[
 r=Re^{b u_R},\qquad
 t=(1-b)\chi(r,\omega)+b\frac{(1+R)^2}{R}v_R,
 \qquad 0\le b\le1.
\]
By \eqref{eq:cone-cut-smallness} and
\eqref{eq:original-slice-height}, this path has radial and time
displacements bounded in $C^3$ independently of $R,b$.
Substituting in \eqref{eq:null-expansions} and using
\eqref{eq:AF}, we obtain $H=2/R+O(R^{-2})$ and $p=O(R^{-2})$,
so the joining spheres also satisfy $H>|p|$ for large $R$.
Their projections, like those of \eqref{eq:fixed-cone-path},
are radial graphs enclosing the fixed inner sphere. We may
therefore join the paths by a smooth reparametrization to
obtain all the conclusions of Proposition~\ref{prop:end-spheres}.

\section{Maximal hypersurfaces along the Harrison family}
\label{sec:bands}

We prove Proposition~\ref{prop:band}. Assuming the existence of
the maximal family, we first deduce the area inequality
\eqref{eq:area-bound} from the hypersurface constraints and
stability of a minimal surface. The remaining subsections
construct the family by continuation.

Let $T_0$ be the future unit normal to a maximal hypersurface,
and define its electric and magnetic fields by
$i_E\mathrm{vol}_h=({}^*\F_q)|_{T\Sigma}$ and
$i_B\mathrm{vol}_h=\F_q|_{T\Sigma}$.
Combining \eqref{eq:Einstein-Maxwell} with the hypersurface
constraints and maximality gives
\begin{equation}\label{eq:charged-constraint}
 \begin{aligned}
 \R(T_0,T_0)&=|E|^2+|B|^2,\\
 R_h&=2\R(T_0,T_0)+|k|^2\ge2|E|^2,\\
 (\Div_hE)\mathrm{vol}_h
     &=\mathrm d(({}^*\F_q)|_{T\Sigma})=0.
 \end{aligned}
\end{equation}

To find a stable minimal surface homologous to $C_s$, we first
check mean convexity of $\partial\Sigma(s)$. At $\mathcal S_0$,
both null second fundamental forms vanish by
Proposition~\ref{prop:charging}, so its mean curvature in
$\Sigma(s)$ is zero. At $C_s$, write the outward normal within
$\Sigma(s)$ in an orthonormal normal frame $(e_0,\nu_0)$ as
$\nu=\sinh\eta\,e_0+\cosh\eta\,\nu_0$. Then
\[
 H_{C_s\subset\Sigma(s)}
 =H_s\cosh\eta-p_s\sinh\eta
 \ge(H_s-|p_s|)\cosh\eta>0.
\]
Together with $H_2(\Sigma(s),C_s;\mathbb Z)=0$ from the
annular topology, this mean convexity allows us to apply the
minimizing construction of \cite{CruzMendes}. We obtain a
connected closed embedded two-sided stable minimal surface
$\Gamma$ with $[\Gamma]=[C_s]$ and $|\Gamma|\le|C_s|$.
By \eqref{eq:charged-constraint},
$\Gamma$ has the same electric flux as $C_s$, namely
$4\pi\sqrt{1-q(s)^2}$ by \eqref{eq:charged-flux-limits} and its
continuation to $q=0$.

We apply the area--charge argument
of~\cite{Gibbons} and
\cite{DainJaramilloReiris}.
If $\nu$, $A_\Gamma$, and $\chi(\Gamma)$ are the normal, second
fundamental form, and Euler characteristic of $\Gamma$, stability
with test function $1$, the Gauss equation, and Gauss--Bonnet give
\begin{equation}\label{eq:stable-surface-integral}
 \begin{aligned}
 0&\le-\int_\Gamma
       \bigl(\Ric(h)(\nu,\nu)+|A_\Gamma|^2\bigr)\dd A\\
  &=2\pi\chi(\Gamma)
       -\frac12\int_\Gamma(R_h+|A_\Gamma|^2)\dd A.
 \end{aligned}
\end{equation}
Combining \eqref{eq:stable-surface-integral} with
$R_h\ge2|E|^2$ from \eqref{eq:charged-constraint} and
$\chi(\Gamma)\le2$ gives $\int_\Gamma|E|^2\dd A\le4\pi$.
The conserved electric flux and Cauchy--Schwarz therefore give
\begin{equation}\label{eq:area-charge-comparison}
 16\pi^2(1-q(s)^2)
 =\left(\int_\Gamma h(E,\nu)\dd A\right)^2
 \le|\Gamma|\int_\Gamma|E|^2\dd A
 \le4\pi|\Gamma|\le4\pi|C_s|.
\end{equation}
The comparison \eqref{eq:area-charge-comparison} proves
\eqref{eq:area-bound} once the maximal hypersurface has been constructed.

\subsection{Continuation and the lapse}

We now construct the family of maximal hypersurfaces. All constants may depend on
the fixed vacuum spacetime and the prescribed boundary path.
We suppress $s$ when considering one hypersurface, and write
$h,k,T_0$ for its induced data. Unless another metric is indicated,
$\D$ and $\R_{\alpha\beta}$ refer to $\g_q$, and $\nabla$ to $h$.

For a normal displacement $wT_0$, the negative linearization
of mean curvature is
\begin{equation}\label{eq:Jacobi}
 w\longmapsto\Delta_\Sigma w
       -\bigl(|k|^2+\R(T_0,T_0)\bigr)w.
\end{equation}
Its Dirichlet kernel is zero by
\eqref{eq:charged-constraint} and the maximum principle.
Applying the implicit-function theorem to the mean curvature
of normal graphs therefore gives a branch through the
truncation of $\Sigma_1$, which continues while the
hypersurfaces remain uniformly smooth and spacelike.

Stationary translations have normal velocity
$n=-\g_q(\T,T_0)$ and preserve maximality. Applying
\eqref{eq:Jacobi} to this variation gives
\begin{equation}\label{eq:charged-lapse-equation}
 \Delta_\Sigma n=\bigl(|k|^2+\R(T_0,T_0)\bigr)n,
 \qquad n|_{\mathcal S_0}=0,\qquad n|_{C_s}>0.
\end{equation}
By the maximum principle, \eqref{eq:charged-lapse-equation}
and \eqref{eq:charged-constraint} give $n>0$ in the interior.
We claim that the resulting local diffeomorphism $\pi$ makes
$\Sigma(s)^\circ$ a graph over $\mathcal A_s^\circ$.
For properness away from the boundary projections, apply
Lemma~\ref{lem:harrison-collar} at $q=1$. Its invariant radial
coordinate restricts to a smooth defining function of
$\mathcal S_0$ on $\overline\Sigma_1$, so a sequence approaching
$\mathcal S_0$ projects towards $\mathcal S_0$ as well. Compactness and the
prescribed outer boundary then give the required properness.
The projection cannot meet the unbounded component outside
$\pi(C_s)$, since properness and openness would make it cover
that component, contrary to compactness. Nor can an interior
point project to $\pi(C_s)$, by openness. It follows that $\pi$
is a covering of $\mathcal A_s^\circ$, and connectedness of
$\Sigma(s)^\circ$ together with simple connectedness of the
annulus makes it a diffeomorphism. Thus we may write
$\Sigma(s)^\circ=\{t=u_s(x)\}$.

By \eqref{eq:charged-lapse-equation} and
\eqref{eq:charged-constraint}, it suffices to bound $n$ on
$C_s$. Although $C_s$ is prescribed, the hypersurface normal
along it is unknown. The expansion inequality bounds its
inclination through a negative quadratic term in $\partial_\nu n$.

Let $Y$ be the tangential part of $\T$ on $C_s$, and put
$e_0=(\T-Y)/a$, where $a^2=-\g_q(\T-Y,\T-Y)$.
Complete this future unit normal by the outward unit normal
$\nu_0$, and set
$H_0=\Div_{C_s}\nu_0$, $p_0=-\Div_{C_s}e_0$, and
$\omega_A=\g_q(\D_{e_A}e_0,\nu_0)$,
where $(e_A)_{A=1,2}$ is a local orthonormal frame in $TC_s$.
The expansion condition gives $H_0>|p_0|$. The decompositions
\[
 T_0=\cosh\eta\,e_0+\sinh\eta\,\nu_0,
 \qquad \nu=\sinh\eta\,e_0+\cosh\eta\,\nu_0,
\]
give, with $X$ the tangential part of $\T$ on $\Sigma$ and
$\operatorname{tr}k=0$,
\begin{equation}\label{eq:boundary-boost-data}
 \begin{aligned}
 n&=a\cosh\eta,\qquad X_\nu=-a\sinh\eta,\\
 k_{\nu\nu}&=-\sum_{A=1}^2k_{AA}
              =H_0\sinh\eta-p_0\cosh\eta,\\
 k_{\nu A}&=-\g_q(\D_{e_A}T_0,\nu)
              =-\nabla_A\eta-\omega_A.
 \end{aligned}
\end{equation}
The Killing equation cancels the dependence on $\eta$ in
\begin{equation}\label{eq:boundary-Killing-cancellation}
 \g_q(\D_\nu\T,T_0)
 = (\cosh^2\eta-\sinh^2\eta)
        \g_q(\D_{\nu_0}\T,e_0)
 =\g_q(\D_{\nu_0}\T,e_0).
\end{equation}
Thus \eqref{eq:boundary-boost-data} and
\eqref{eq:boundary-Killing-cancellation} give
\begin{equation}\label{eq:boundary-lapse}
 \begin{aligned}
 \partial_\nu n
 &=-\g_q(\D_\nu\T,T_0)+k(\nu,X)\\
 &=-\g_q(\D_{\nu_0}\T,e_0)-aH_0\sinh^2\eta
       +ap_0\sinh\eta\cosh\eta\\
 &\quad-Y\cdot(\mathrm d\eta+\omega).
 \end{aligned}
\end{equation}

At a boundary maximum with $|\sinh\eta|\ge1$,
\eqref{eq:boundary-boost-data} gives
$\mathrm d\eta=-\coth\eta\,\mathrm d\log a$ and
$|\coth\eta|\le\sqrt2$. Since
$|\sinh\eta\cosh\eta|\le\sinh^2\eta+1/2$,
\eqref{eq:boundary-lapse} yields
\begin{equation}\label{eq:boundary-boost-bound}
 0\le\partial_\nu n
 \le-a(H_0-|p_0|)\sinh^2\eta+C
 \le-c\sinh^2\eta+C.
\end{equation}
Smoothness of the prescribed compact boundary path and
$a(H_0-|p_0|)>0$ give uniform $c>0$ and $C$.
Consequently $\eta$ is bounded at a maximum, and
$n=a\cosh\eta$ gives
\begin{equation}\label{eq:lapse-bound}
 0<n\le C.
\end{equation}
On a fixed outer collar, where $\T$ is uniformly timelike,
\eqref{eq:lapse-bound} gives
\begin{equation}\label{eq:outer-collar-normal}
 -\g_q(T_0,\T/\sqrt{-\g_q(\T,\T)})\le C.
\end{equation}
This bounds the normals and stationary graph slopes there.
Integrating from the prescribed boundary also bounds $u_s$
on this collar.

\subsection{Height estimates}

The definition of $\DOC$ provides incoming and outgoing
timelike curves for $\g$, while the graphs are spacelike for
$\g_q$. To use these curves in the height argument of
Chru\'sciel and
Wald~\cite{ChruscielWald}, we apply \eqref{eq:inverse-charged}
to the graph conormal. Put $G_s=t-u_s\circ\pi$.
Since $\mathrm dG_s(\T)=1$, that identity gives
\begin{equation}\label{eq:cone-pullback}
 \g^{-1}\left(\mathrm dG_s-\frac{W_q}{D_q(p)},
              \mathrm dG_s-\frac{W_q}{D_q(p)}\right)
 =\frac{D_q}{4}\g_q^{-1}(\mathrm dG_s,\mathrm dG_s)<0.
\end{equation}
At the chosen zero $p\in\mathcal S_0$ of $\T$,
$D_q(p)=(1+q)^2+(1-q)^2\psi(p)^2\ge1$, and
\eqref{eq:charged-one-form} gives
$(\mathrm dG_s-W_q/D_q(p))(\T)=D_q/D_q(p)>0$.
Since $\T$ is future timelike at the outer boundary, the
corrected covector is positive on future causal vectors there.
Timelikeness and connectedness preserve this sign throughout
the annulus. Thus every nonconstant future causal path
$c:z_-\to z_+$ whose spatial projection stays there satisfies
$0<\int_c(\mathrm dG_s-W_q/D_q(p))$, or equivalently
\begin{equation}\label{eq:height-access}
 D_q(p)\bigl[u_s(\pi z_+)-u_s(\pi z_-)\bigr]
 <\int_c\bigl(D_q(p)\,\mathrm dt-W_q\bigr).
\end{equation}

For an interior point $x$, the original domain of outer
communications gives future timelike paths $c_+$ from $(0,x)$
to the outer stationary cylinder and $c_-$ from that cylinder
to $(0,x)$. Truncate $c_+$ at its first crossing and $c_-$
at its last crossing, so their spatial projections stay in
the annulus. With $y_+,y_-$ the boundary projections,
\eqref{eq:height-access} gives the two bounds
\begin{equation}\label{eq:two-sided-height-access}
 \begin{split}
 u_s(y_+)-\frac1{D_q(p)}
       \int_{c_+}\bigl(D_q(p)\,\mathrm dt-W_q\bigr)
 &<u_s(x)\\
 &<u_s(y_-)+\frac1{D_q(p)}
       \int_{c_-}\bigl(D_q(p)\,\mathrm dt-W_q\bigr).
 \end{split}
\end{equation}
The boundary heights are prescribed by $C_s$. Choose the
crossings transverse, so the truncated paths persist as $x$
and $C_s$ vary. After smoothly identifying the moving annuli,
a finite cover of the compact parameter interval and any
$K\Subset\mathcal A^\circ$ gives such path families in fixed
compact sets, with bounded lengths in an auxiliary Riemannian
metric. Formula~\eqref{eq:regular-charged} bounds the coefficients
of $D_q(p)\,\mathrm dt-W_q$ on these sets uniformly in $q$.
Together with $D_q(p)\ge1$ and the bounded boundary heights,
\eqref{eq:two-sided-height-access} therefore gives
\begin{equation}\label{eq:interior-height}
 \sup_{x\in K}|u_s(x)|\le C_K
       \qquad(K\Subset\mathcal A^\circ),
\end{equation}
uniformly in $s$.

Near $\mathcal S_0$, use the corotating coordinates of
Lemma~\ref{lem:harrison-collar}, with $\partial_t=\K$.
To write the hypersurface as a graph in these coordinates, we show that it
meets each orbit of $\K$ once. The field is timelike for small
$\rho>0$, and \eqref{eq:local-circle} with the commuting flows gives
\begin{equation}\label{eq:corotating-graph}
 G_s(\Psi_{\lambda,\K}p)
 =\lambda+G_s(\Psi_{\Omega_H\lambda,\Z}p).
\end{equation}
The boundedness of the circle orbit makes the right side tend
to $\pm\infty$ with $\lambda$, whereas timelikeness makes the
left side strictly increasing. It therefore has exactly one
zero, giving a corotating graph $t=v_s(\rho,y)$ whose height
at any fixed $\rho=\rho_0$ is bounded by
\eqref{eq:interior-height}.

We extend the bound for $v_s$ to $\rho=0$ by radial barriers.
Set $\bar h=\mathrm d\rho^2+h_\rho$ and, with inner products
taken in $\bar h$, the metric \eqref{eq:ADM-collar}
gives the graph volume density and its derivative as
\begin{equation}\label{eq:collar-graph-volume}
 L(\xi)=\sqrt{(1+\langle\beta,\xi\rangle)^2-a^2|\xi|^2},
 \qquad
 L_\xi(\xi)=
 \frac{(1+\langle\beta,\xi\rangle)\beta-a^2\xi}{L(\xi)}.
\end{equation}
Maximality is $\operatorname{div}L_\xi(\nabla v)=0$.
Since $|\beta|<a$, differentiating \eqref{eq:collar-graph-volume}
shows that this operator is strictly decreasing on the convex
set of future spacelike graph gradients,
\begin{equation}\label{eq:collar-volume-concavity}
 D^2L(\xi)[z,z]
 =\frac{\langle\beta,z\rangle^2-a^2|z|^2
                  -\langle L_\xi(\xi),z\rangle^2}{L(\xi)}<0
       \qquad(z\ne0).
\end{equation}
Take
$b_+=\max_{\rho=\rho_0}v+\rho_0-\rho$ and
$b_-=\min_{\rho=\rho_0}v-\rho_0+\rho$.
Substituting these gradients into \eqref{eq:collar-graph-volume},
using $\beta^\rho=0$ and the differentiated bounds for
$a,\beta,h_\rho$ in \eqref{eq:collar-orders}, gives
\begin{equation}\label{eq:collar-barrier-divergence}
 \begin{aligned}
 L_\xi(\nabla b_\pm)
   &=\frac{\beta\pm a^2\partial_\rho}{\sqrt{1-a^2}},\\
 \operatorname{div}L_\xi(\nabla b_\pm)
   &=\pm\frac1{\sqrt{\det h_\rho}}
       \partial_\rho\left(
         \frac{\sqrt{\det h_\rho}\,a^2}{\sqrt{1-a^2}}\right)
       +\operatorname{div}_{h_\rho}
          \left(\frac\beta{\sqrt{1-a^2}}\right)\\
   &=\pm2\kappa^2\rho+O(\rho^2).
 \end{aligned}
\end{equation}

To compare $v$ with these barriers without prescribing its
value at $\rho=0$, we estimate the flux through a small inner
boundary. For each fixed smooth spacelike hypersurface, regular
normal coordinates give
$x^0=a_0(y)x^1+O((x^1)^2)$ with a smooth remainder and $|a_0|<1$.
Using \eqref{eq:collar-boost-coordinates} and then
\eqref{eq:collar-graph-volume}, we obtain
\begin{equation}\label{eq:collar-inner-flux}
 \begin{gathered}
 \rho=x^1\sqrt{1-(x^0/x^1)^2},\qquad
 \left.\partial_{x^1}\rho\right|_{x^1=0}
        =\sqrt{1-a_0(y)^2}>0,\\
 v=\kappa^{-1}\operatorname{arctanh}(x^0/x^1)
       =\kappa^{-1}\operatorname{arctanh}a_0(y)+O(\rho),\\
 L(\nabla v)=1+O(\rho^2),\qquad
 L_\xi(\nabla v)\cdot\partial_\rho
      =-\frac{a^2\partial_\rho v}{L(\nabla v)}=O(\rho^2).
 \end{gathered}
\end{equation}
The first row makes $x^1$ a smooth function of $(\rho,y)$, so
$v$ is smooth up to $\rho=0$. The derivative bounds and the
constants in \eqref{eq:collar-inner-flux} may vary along the branch.
Subtracting \eqref{eq:collar-barrier-divergence} from the equation
for $v$ and testing with $\varphi=(v-b_+)_+$ on
$\epsilon<\rho<\rho_0$, we can therefore integrate by parts.
The upper-boundary term vanishes because $\varphi=0$ there,
and the inner-boundary term is $O(\epsilon^2)$ by
\eqref{eq:collar-inner-flux} and
\eqref{eq:collar-barrier-divergence}. The concavity inequality
\eqref{eq:collar-volume-concavity} then gives
\begin{equation}\label{eq:collar-barrier-comparison}
 \begin{aligned}
 0&\le-\int
  \langle L_\xi(\nabla v)-L_\xi(\nabla b_+),\nabla\varphi\rangle
                                      \,\mathrm d\mu_{\bar h}\\
  &=-\int\operatorname{div}L_\xi(\nabla b_+)\varphi
                                      \,\mathrm d\mu_{\bar h}
       +O(\epsilon^2).
 \end{aligned}
\end{equation}
Let $\epsilon\downarrow0$ in \eqref{eq:collar-barrier-comparison}
for the fixed hypersurface. By \eqref{eq:collar-barrier-divergence},
the integral on the right is strictly negative unless $\varphi=0$. Thus
$v\le b_+$, with a bound independent of its height at $\rho=0$.
The lower barrier proves
\begin{equation}\label{eq:collar-height}
 \min_{\rho=\rho_0}v-\rho_0\le v\le
                    \max_{\rho=\rho_0}v+\rho_0.
\end{equation}
By \eqref{eq:collar-boost-coordinates}, the bound
\eqref{eq:collar-height} puts the hypersurfaces, after shortening
the collar, in a common regular normal neighbourhood, where
\begin{equation}\label{eq:wedge}
 |x^0|\le\theta x^1\quad\hbox{for some }\theta<1.
\end{equation}

\subsection{Closing the continuation argument}

The bounds for $u_s$ in \eqref{eq:interior-height} and $v_s$ in
\eqref{eq:collar-height} keep the hypersurfaces in a common
compact set. The remaining difficulty is to control their
normals where $\T$ is spacelike and thereby preserve
spacelikeness in the limit.

\begin{lemma}\label{lem:maximal-compactness}
Under the hypotheses of Proposition~\ref{prop:band}, let
$s_j\to s_*$ and let $\Sigma_j$ be compact spacelike maximal
stationary graphs with boundary $\mathcal S_0\sqcup C_{s_j}$
and topology $\mathbb S^2\times[0,1]$.
Assume their lapses and stationary heights satisfy
\eqref{eq:lapse-bound} and \eqref{eq:interior-height} uniformly.
Then a subsequence converges smoothly, including at the boundary,
to a compact spacelike maximal stationary graph $\Sigma_*$ with
boundary $\mathcal S_0\sqcup C_{s_*}$ and the same topology.
\end{lemma}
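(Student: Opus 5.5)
The core of the proof is a uniform gradient bound, i.e.\ a uniform lower bound on the spacelikeness of the $\Sigma_j$. Once that is in hand, standard elliptic theory for the maximal surface equation gives smooth convergence. I would split the annulus into three regions: an outer collar where $\T$ is uniformly timelike, a compact interior region which may meet the ergoregion, and the inner collar near $\mathcal S_0$.

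\emph{Outer collar.} Here \eqref{eq:lapse-bound} gives \eqref{eq:outer-collar-normal}. This bounds the tilt of $T_0$ relative to the unit stationary normal, and hence bounds the graph slopes of the $u_{s_j}$ uniformly up to $C_{s_j}$. The boundary data $C_{s_j}$ vary smoothly in $s$. So the quasilinear equation is uniformly elliptic there, and boundary Schauder estimates give $C^{k}$ bounds up to the outer boundary.

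\emph{Inner collar.} I would work in the corotating coordinates of Lemma~\ref{lem:harrison-collar}, where $\K$ is timelike for $\rho>0$. The height bound \eqref{eq:collar-height} places all hypersurfaces in the common wedge \eqref{eq:wedge} of a regular normal neighbourhood of $\mathcal S_0$. The bifurcation sphere is fixed and $\K$ is timelike off it. I would therefore write the $\Sigma_j$ as graphs $x^0=w_j(x^1,y)$ over the fixed spacelike surface $x^0=0$, which contains $\mathcal S_0$ as a boundary. The wedge bound $|x^0|\le\theta x^1$ together with $w_j=0$ on $\mathcal S_0$ gives a boundary gradient estimate via barriers, namely the cone surfaces $x^0=\pm\theta' x^1$, which are spacelike and of controlled mean curvature. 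Interior gradient control then comes from the next step.

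\emph{Interior, including the ergoregion.} This is the main obstacle, since the stationary lapse $n$ no longer controls the tilt where $\T$ is spacelike. I would use Bartnik's interior gradient estimate~\cite{Bartnik}. It requires a fixed time function with timelike gradient near the compact region, a bound on the mean curvature (here zero), and a height bound with respect to that time function. Lemma~\ref{prop:compact-causal} supplies such a function near each compact region of the limiting metric $\g_{q(s_*)}$. By continuity in $q$ the function remains a time function for $\g_{q(s_j)}$ with $j$ large. The heights \eqref{eq:interior-height} and \eqref{eq:collar-height} give the needed bound on this time function along $\Sigma_j$. Bartnik's estimate then bounds $-\g_q(T_0,\nabla\tau/|\nabla\tau|)$ on compact subsets, uniformly in $j$.

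Combining the three regions gives uniform $C^1$ bounds with uniform spacelikeness. Uniform ellipticity follows, and Schauder theory (interior, outer boundary, and at $\mathcal S_0$ in the graph representation above) gives $C^{k,\alpha}$ bounds for all $k$. Arzel\`a--Ascoli then yields a smoothly convergent subsequence. The limit $\Sigma_*$ is a maximal hypersurface for $\g_{q(s_*)}$ with boundary $\mathcal S_0\sqcup C_{s_*}$, and it is spacelike because the gradient bounds are uniform. It is a stationary graph, since $n_*\ge0$ and $n_*>0$ on $C_{s_*}$, so the maximum principle for \eqref{eq:charged-lapse-equation} gives $n_*>0$ inside, and the projection argument preceding \eqref{eq:interior-height} applies verbatim. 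This identifies $\Sigma_*^\circ$ with $\mathcal A_{s_*}^\circ$, so $\Sigma_*$ has the topology $\mathbb S^2\times[0,1]$.
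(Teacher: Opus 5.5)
\textbf{Gap: no uniform normal bound on a collar of $\mathcal S_0$.} Your outline matches the paper's structure: the lapse bound on the outer collar, the wedge at $\mathcal S_0$, Bartnik's estimate with the temporal function of Lemma~\ref{prop:compact-causal} in the interior, then Schauder estimates and the maximum principle for $n_*$. The inner collar, however, is not closed.

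The wedge $|w_j|\le\theta x^1$ with $w_j=0$ at $x^1=0$ controls the normal only on $\mathcal S_0$ itself, where it gives $|\partial_{x^1}w_j|\le\theta$. Inside the collar it is only a $C^0$ bound, so slopes could still approach $1$ at points with small $x^1>0$. Your cone barriers add nothing, since the wedge is already that barrier statement, coming from \eqref{eq:collar-height}.

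You then defer the collar to the interior step, but that step cannot reach it:
\begin{itemize}
\item Lemma~\ref{prop:compact-causal} gives temporal functions only near compact subsets of $\mathcal B=\mathbb R\times\mathcal A^\circ$. Its hypothesis \eqref{eq:compact-proper-time} is verified through the stationary heights and lapse bound in \eqref{eq:limit-proper-time}. This says nothing about a spacetime neighbourhood of $\mathcal S_0$, which contains horizon points where the stationary time degenerates.
\item Bartnik's estimate is not a pure interior estimate here. On $\Sigma_j\cap\pi^{-1}(K)$ with $K\Subset\mathcal A^\circ$, \eqref{eq:temporal-gradient} needs normal bounds on the boundary. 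On the inner side these are exactly the missing collar bounds.
\item Applying the estimate on $\{\rho\ge\rho_1\}$ for each $\rho_1>0$ gives constants that degenerate as $\rho_1\to0$. That yields neither uniform ellipticity near $\mathcal S_0$ nor smooth convergence including at the boundary.
\end{itemize}

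\textbf{How the paper closes it.} The missing ingredient is a temporal function defined across $\mathcal S_0$ and localized near it. In the regular normal chart of Lemma~\ref{lem:harrison-collar}, with $z=(x^1,y)$ and $\theta<c<1$, the paper takes
\[
\tau=\delta+x^0-c\sqrt{\epsilon^2+|z|^2},\qquad \delta=(c-\theta)r_0/4,\qquad \epsilon=\delta/(4c).
\]
\begin{itemize}
\item Its gradient is uniformly timelike for all $\g_j$.
\item The wedge forces $\tau\le-\tfrac34(c-\theta)r_0<0$ on $\Sigma_j\cap\{|z|=r_0\}$.
\item One has $\tau\ge\delta/2$ on $\{|z|\le\delta/[4(c+\theta)]\}$.
\end{itemize}
Hence in Lemma~\ref{lem:temporal-gradient} the only boundary contribution lies on $\mathcal S_0$, where \eqref{eq:horizon-boundary-normal} controls it. This yields uniform normal bounds on a fixed collar. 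These then serve as inner boundary data for the interior application of Lemma~\ref{lem:temporal-gradient}, with $\tau$ from Lemma~\ref{prop:compact-causal} on $Q=[-C_K,C_K]\times K$. With this step inserted, the rest of your argument, including the limit-graph and topology conclusions, goes through.
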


We explain how the lapse and height bounds enter the proof,
which is given in Appendix~\ref{app:maximal-compactness}.
Write $\g_j=\g_{q(s_j)}$, so that $\g_j\to\g_{q(s_*)}$
smoothly on compact sets by Proposition~\ref{prop:charging},
and extend the normals and lapses by stationary translations. The translated hypersurfaces
are the level sets of $G_j=t-u_j(x)$ with $\T G_j=1$, whence
\begin{equation}\label{eq:translated-graph-time}
 \mathrm dG_j=-n_j^{-1}T_{0,j}^{\flat},\qquad
 -\g_j^{-1}(\mathrm dG_j,\mathrm dG_j)=n_j^{-2}\ge C^{-2}.
\end{equation}
The height bound \eqref{eq:interior-height} controls the
oscillation of $G_j$ on compact sets. Together with
\eqref{eq:translated-graph-time}, it bounds proper time in the
limiting metric by \eqref{eq:limit-proper-time} below.
Lemma~\ref{prop:compact-causal} converts this bound into a fixed
smooth function with timelike gradient, allowing us to apply
Bartnik's gradient estimate in
Lemma~\ref{lem:temporal-gradient} to bound the normals on
compact interior sets. Near $\mathcal S_0$,
the corresponding input is $|x^0|\le\theta x^1$ with
$\theta<1$ from \eqref{eq:wedge}. The proof in
Appendix~\ref{app:maximal-compactness} carries these normal
bounds through to smooth spacelike convergence, including at
the boundary.

If the branch had an endpoint before $s=1$,
Lemma~\ref{lem:maximal-compactness} would give a smooth spacelike
limit there. Since the Dirichlet operator \eqref{eq:Jacobi}
is invertible at this limit, the implicit-function theorem
would extend the branch through the endpoint, with local
uniqueness ensuring agreement with the original family.
Consequently the family exists for $0\le s\le1$, as required
in Proposition~\ref{prop:band}.

\section{Staticity and small angular momentum}
\label{sec:finish}

Let $\eps_{\mathrm{AIK}}(A)>0$ be the threshold in
\cite{AIKsmall} for $\overline A\le A$. To prove
Theorem~\ref{thm:main}, it suffices to find $j_*(A)>0$ such that
its hypotheses with $M=1$, $\overline A\le A$, and $J<j_*(A)$ imply
\begin{equation}\label{eq:horizon-smallness}
 \sup_{\mathcal S_0}\g(\T,\T)<\eps_{\mathrm{AIK}}(A)^2.
\end{equation}
We obtain this estimate by passing to the Schwarzschild limit
as $J\to0$.

A direct extension of the Kerr isometry across the boundary
of $\{f>0\}$ would require the defining function $-f$ to satisfy
the conditional pseudoconvexity inequality used
in~\cite{IKuniqueness},
\[
 \D^2(-f)(V,V)<0
 \quad\hbox{if}\quad
 \g(V,V)=V(f)=\g(\T,V)=0.
\]
At a rotating Kerr ergosurface, the vector $V=\T$ is
null there and satisfies both orthogonality conditions. The
Killing equation and $\T(f)=0$ give
\begin{equation}\label{eq:stationary-norm-hessian}
 \D_\T\T=\tfrac12\operatorname{grad}_{\g}f,\qquad
 \D^2(-f)(\T,\T)=\tfrac12\g^{-1}(\mathrm df,\mathrm df).
\end{equation}
For mass-one Kerr with $0<J<1$, substituting the equatorial
Boyer--Lindquist values at $R=2$ into
\eqref{eq:stationary-norm-hessian} gives
\[
 \mathrm df=\tfrac12\mathrm dR,\qquad
 \g^{RR}=\tfrac14J^2,\qquad
 \D^2(-f)(\T,\T)=\tfrac1{32}J^2>0.
\]
We therefore continue the scalar Kerr identities through the
component where $\T$ is timelike and pass to a limit as $J\to0$.

\subsection{The Kerr identities and the twist}

On the Kerr end of Theorem~\ref{thm:kerr-end}, the coordinates
and potential in \eqref{eq:equality-inverse-data} give
\[
 P=(1-\sigma)^{-1}=\frac{R+iJ\cos\theta}{2},\qquad
 z=\operatorname{Im}P.
\]
With $\Delta=R^2-2R+J^2$, the metric gives
\[
 \g^{RR}=\frac\Delta{4|P|^2},\qquad
 \g^{\theta\theta}=\frac1{4|P|^2},
\]
and hence, using $f$ from \eqref{eq:equality-vacuum-potentials},
\begin{equation}\label{eq:Kerr-potential-gradients}
 \begin{aligned}
 \D_\alpha z\D^\alpha z
   &=\frac{J^2\sin^2\theta}{16|P|^2}
     =\frac{J^2/4-z^2}{4|P|^2},\\
 \D_\alpha\sigma\D^\alpha\sigma
   &=\frac{\Delta-J^2\sin^2\theta}
           {|P|^2(R+iJ\cos\theta)^4}
     =\frac{4f}{(R+iJ\cos\theta)^4}.
 \end{aligned}
\end{equation}
Writing $\cF^2=\cF_{\mu\nu}\cF^{\mu\nu}$ and using
${}^*\cF=-i\cF$ and $\mathrm d\sigma=2i_\T\cF$, we obtain
\begin{equation}\label{eq:Ernst-self-dual-contraction}
 \begin{gathered}
 \cF_{\alpha\rho}\cF_\beta{}^\rho
       =\tfrac14\g_{\alpha\beta}\cF^2,\\
 \D_\rho\sigma\D^\rho\sigma
       =4\T^\alpha\T^\beta\cF_{\alpha\rho}\cF_\beta{}^\rho
       =\g(\T,\T)\cF^2=-f\cF^2.
 \end{gathered}
\end{equation}
The first line of \eqref{eq:Kerr-potential-gradients} gives
the identity for $z$. Substituting its second line into
\eqref{eq:Ernst-self-dual-contraction} gives, on the Kerr end,
\begin{equation}\label{eq:Kerr-identities}
 4|P|^2\D_\alpha z\D^\alpha z+z^2=\frac{J^2}{4},\qquad
 \cF^2=-\frac{(1-\sigma)^4}{4}.
\end{equation}
These are the identities of Mars~\cite{Mars}, in the normalization
of~\cite{IKuniqueness}.

Let $U$ be the component of $\{f>0\}\cap\Sigma_1$ containing
the end. Timelikeness of $\T$ gives analytic stationary coordinates
by~\cite{MullerHagen}, in which $\sigma$ is analytic because
$\mathrm d\sigma=2i_\T\cF$. Since $f<1$ by \eqref{eq:fbound},
$P=(1-\sigma)^{-1}$ has no singularity on $U$.
Theorem~\ref{thm:kerr-end} and analytic continuation therefore
give \eqref{eq:Kerr-identities} throughout $U$. As $\T(z)=0$,
the gradient of $z$ lies in the positive definite orthogonal
complement of $\T$, and the first identity gives
\begin{equation}\label{eq:twist-bound-Kerr}
 |z|\le\tfrac12J\quad\hbox{on }U.
\end{equation}
For $J<1$, this bounds the twist itself. Indeed, since $0<f<1$,
a point where $|\psi|=1$ would satisfy
\[
 1=|z|\bigl((1-f)^2+1\bigr)\le J<1.
\]
Connectedness of $U$ and the normalization $\psi\to0$ at infinity
therefore give $|\psi|<1$ throughout $U$, and
\eqref{eq:twist-bound-Kerr} implies
\begin{equation}\label{eq:twist-smallness}
 |\psi|=|z|\bigl((1-f)^2+\psi^2\bigr)\le J
       \quad\hbox{on }U.
\end{equation}

\subsection{A staticity criterion}

We prove staticity from vanishing twist on the timelike component
containing the end, assuming that $f$ has nonvanishing differential
at its interior boundary. The Schwarzschild scalar identity will
verify this condition in the application.

\begin{lemma}\label{lem:zero-spin-staticity}
Let $\g$ and $\T$ be $C^5$ stationary vacuum fields near a spacelike
maximal hypersurface
$\overline\Sigma_1\simeq[1,\infty)\times\mathbb S^2$ with boundary
$\mathcal S_0$. Write $\T=nT_0+X$, where $T_0$ is the future
unit normal, and $f=-\g(\T,\T)$.
Assume that its end is the zero-time section of
a stationary chart satisfying \eqref{eq:AF} through five
derivatives with $M=1$, that $\T$ is tangent to $\mathcal S_0$, and
that $n>0$ on $\Sigma_1$. Let $U$ be the component of
$\{f>0\}\cap\Sigma_1$ containing the end and set
$F=\tfrac12\mathrm d\T^\flat$. If
\begin{equation}\label{eq:staticity-hypotheses}
 \begin{gathered}
 \T^\flat\wedge F=0\quad\hbox{on }U,\\
 \left.\mathrm df\right|_{T\Sigma_1}\ne0
       \quad\hbox{on }\partial_{\Sigma_1}U,
 \end{gathered}
\end{equation}
then
\[
 \T|_{\mathcal S_0}=0,\qquad k=0,\qquad X=0
       \quad\hbox{on }\overline\Sigma_1.
\]
In particular $f=n^2>0$ throughout $\Sigma_1$, and the stationary
metric is static in the region swept out by this slice.
\end{lemma}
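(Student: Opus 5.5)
The plan is to follow the sketch in Section~\ref{subsec:ideas}. We show that $U$ reaches $\mathcal S_0$ and that $\T$ vanishes there, and then use the Sudarsky--Wald identity \eqref{eq:stationary-divergence} to conclude $k=0$.

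\emph{Step 1: the tangential part of $\T$ is normal to the level sets of $f$.} On $U$ the Frobenius condition $\T^\flat\wedge F=0$ makes $\T^\perp$ integrable. Where $f>0$ we then have $\mathrm d\vartheta=0$ in \eqref{eq:stationary-reduction}, so locally $\T^\flat=-f\,\mathrm d\tau$ for a function $\tau$. Restricting to $\Sigma_1$ gives
\[
 X^\flat=\T^\flat|_{T\Sigma_1}=-f\,\mathrm d(\tau|_{\Sigma_1}),
\]
so $X^\flat$ is $f$ times a closed form. At a point of $\partial_{\Sigma_1}U$ we have $f=0$, hence $n^2=|X|^2$. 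Since $n>0$, $X$ is nonzero there.

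We still need $X$ to be normal to the level surface. For this, use $2i_\T F=\mathrm df$ together with $\T^\flat\wedge F=0$. Contracting the latter with $\T$ gives
\[
 \g(\T,\T)F=\T^\flat\wedge i_\T F=\tfrac12\T^\flat\wedge\mathrm df .
\]
By continuity this identity holds on $\overline U$, so at $f=0$ we get $\T^\flat\wedge\mathrm df=0$. Restricted to $\Sigma_1$, this says $X^\flat\wedge\mathrm df|_{T\Sigma_1}=0$. Because $\mathrm df|_{T\Sigma_1}\ne0$ by \eqref{eq:staticity-hypotheses}, $X$ is a nonzero normal to the smooth level surface $\partial_{\Sigma_1}U$. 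It also has a fixed sign of normal component on each connected component, since $X$ never vanishes there.

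\emph{Step 2: $U$ reaches $\mathcal S_0$.} Suppose some component $\Gamma$ of $\partial_{\Sigma_1}U$ is a closed surface in $\Sigma_1$. Since $\Sigma_1\simeq(1,\infty)\times\mathbb S^2$, every closed embedded surface separates, and $\Gamma$ bounds a compact region $V$, possibly together with $\mathcal S_0$. Apply $\Div_hX=0$ from \eqref{eq:KID}.

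If $V$ does not meet $\mathcal S_0$, then $\int_\Gamma\langle X,\nu\rangle=0$, contradicting the fixed nonzero sign of the flux integrand. If $V$ is bounded by $\Gamma$ and $\mathcal S_0$, then the flux through $\mathcal S_0$ vanishes because $\T$ is tangent to $\mathcal S_0$ and $n=0$ there. This again forces $\int_\Gamma\langle X,\nu\rangle=0$, a contradiction.

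Hence $\partial_{\Sigma_1}U$ is empty, so $f>0$ on all of $\Sigma_1$ and $U=\Sigma_1$. Extra care is needed for boundary components that are noncompact or accumulate at $\mathcal S_0$. I would handle these by working in a compact truncation and using that $\{f\le0\}$ is closed and avoids the asymptotic region.

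\emph{Step 3: $\T$ vanishes on $\mathcal S_0$.} Now $f>0$ on $\Sigma_1$, so by continuity $f\ge0$ on $\mathcal S_0$. On $\mathcal S_0$ we have $n=0$ and $\T=X$ tangent to the sphere, so $f=-|X|^2\le0$. Therefore $f=0$ and $X=0$ on $\mathcal S_0$.

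I expect this to be the delicate point, because it uses that $\Sigma_1$ reaches $\mathcal S_0$ rather than stopping short of it. A safer route, as in the introduction, is to use that $X|_{\mathcal S_0}$ is a Killing field of the induced sphere metric, since the horizon is nonexpanding. The vanishing-twist relation $X^\flat=-f\,\mathrm d\tau$, combined with $\partial_\nu n>0$ from Hopf and $f=n^2-|X|^2$, shows that at a zero of $X$ the tangential derivative $\nabla X$ also vanishes. A Killing field on a sphere vanishing to first order at a point vanishes identically, and this gives $X=0$ on $\mathcal S_0$.

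\emph{Step 4: $k=0$, $X=0$, and staticity.} Integrate \eqref{eq:stationary-divergence} over $\Sigma_1$. The boundary term at infinity vanishes by \eqref{eq:slice-asymptotic-data}, since $k_{ij}X^j=O(r^{-5})$, and the term on $\mathcal S_0$ vanishes because $X=0$ there. This gives $\int n|k|^2=0$, so $k=0$ since $n>0$.

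Then \eqref{eq:KID} shows that $X$ is a Killing field of $h$ with $X=O(r^{-2})$, and an asymptotically vanishing Killing field on an asymptotically flat end is zero. So $X=0$ and $f=n^2>0$. Finally, $\T=nT_0$ is hypersurface orthogonal, so the metric is static in the region swept out by this slice.
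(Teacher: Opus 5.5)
There is a genuine gap in Step~2, and the first argument in your Step~3 depends on it. The flux argument with $\Div_hX=0$ only rules out components of $\partial_{\Sigma_1}U$ that are closed surfaces in $\Sigma_1$. It does not give $\partial_{\Sigma_1}U=\varnothing$. Components accumulating at $\mathcal S_0$ cannot be excluded a priori, and they are exactly what happens in Kerr, where the ergosurface meets the bifurcation sphere at the poles. At any $p\in\overline U\cap\mathcal S_0$ one has $\T(p)=0$, since $f\ge0$ on $\overline U$ while $f=-|X|_h^2$ on $\mathcal S_0$. Hence $\mathrm df(p)=2i_\T F(p)=0$. The hypothesis $\mathrm df|_{T\Sigma_1}\ne0$ holds only on the open slice. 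So cutting at $\rho=\epsilon$ leaves curves on $\{\rho=\epsilon\}\cap\{f=0\}$, and estimating their flux contribution would need control of this degenerate zero set that you do not have. Your remark that $\{f\le0\}$ avoids the asymptotic region only deals with infinity, which is not the problem. What the flux argument does prove is the paper's weaker claim. If $\overline U\cap\mathcal S_0=\varnothing$, then $\partial_{\Sigma_1}U$ is a compact subset of the open slice, all its components are closed, and your contradiction applies. This gives a contact point $p\in\overline U\cap\mathcal S_0$ with $X(p)=0$. Your fallback in Step~3 must be run at this point, not at an arbitrary zero of the Killing field $X|_{\mathcal S_0}$.

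That fallback is the paper's route, but its decisive step is only asserted: you must show that the rotation coefficient $\omega_p$ of $\nabla X(p)$ vanishes. The ingredients you cite do not give this. Let $\kappa=\partial_\nu n(p)$, let $\rho$ be the distance to $\mathcal S_0$, and let $y$ be tangential coordinates centred at $p$. The facts $\kappa>0$ and $f=n^2-|X|_h^2$ are compatible with $\omega_p\ne0$. At the Kerr poles both hold, and to leading order $\{f>0\}$ is the cone $\kappa\rho>|\omega_p||y|$ with vertex $p$. Nor can you use $X^\flat=-f\,\mathrm d\tau$ at $p$, since $\tau$ exists only on $U$ and degenerates as $f\to0$. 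You need a form of the vanishing twist that is continuous up to $\overline U$. The paper contracts $\T^\flat\wedge F=0$ with $\T$ to get $F=-(2f)^{-1}\T^\flat\wedge\mathrm df$ on $U$, hence $F\wedge F=0$ on $\overline U$. In an adapted orthonormal coframe at $p$, $F=\kappa\,\theta^0\wedge\theta^1+\omega_p\,\theta^2\wedge\theta^3$, so $2\omega_p\kappa=0$ forces $\omega_p=0$. Killing uniqueness on the sphere then gives $\T|_{\mathcal S_0}=0$. In your slice language, the identity $f\,\mathrm dX^\flat=\mathrm df\wedge X^\flat$ holds with its derivatives on $\overline U$. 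At $p$ the quantities $f$, $\mathrm df$, $X$ and $\nabla_\nu X$ vanish while $\partial_\nu^2f=2\kappa^2$, so the second normal derivative of the identity yields $\kappa^2\,\mathrm dX^\flat(p)=0$. With these repairs your Steps~1 and~4 are fine. Ending with the vanishing of a decaying Killing field on the asymptotically flat end is a valid alternative to the paper's use of the vanishing of $X$ and $\nabla X$ on $\mathcal S_0$.
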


\begin{proof}
We first prove $\overline U\cap\mathcal S_0\ne\varnothing$.
The $C^5$ fields satisfy \eqref{eq:KID}, with $n=0$ on
$\mathcal S_0$ by tangency of $\T$ and $n\to1$ at infinity.
The maximum-principle argument proving \eqref{eq:lapse-boundary}
therefore gives $\partial_{\nu_{\mathcal S_0}}n>0$.
On $U$, contracting $\T^\flat\wedge F=0$ with $\T$ gives
\begin{equation}\label{eq:static-two-form}
 F=-\frac1{2f}\T^\flat\wedge\mathrm df,
 \qquad F\wedge F=0\quad\hbox{on }U.
\end{equation}
By continuity, $F\wedge F=0$ also holds on $\overline U$.
Suppose now that $\overline U\cap\mathcal S_0=\varnothing$,
and set $B=\partial_{\Sigma_1}U$. This is a nonempty compact
subset of the open slice, with $f=0$ on $B$. Since $n>0$
there, $\T$ does not vanish, and the first condition in
\eqref{eq:staticity-hypotheses}, extended by continuity, gives
$F=\T^\flat\wedge Y$ at each point of $B$. Consequently
\begin{equation}\label{eq:static-zero-level-relations}
 \begin{gathered}
 \mathrm df=2i_\T F=-2Y(\T)\T^\flat,\\
 \mathrm df|_{T\Sigma_1}=-2Y(\T)X^\flat,
 \qquad |X|_h=n>0\quad\text{on }B.
 \end{gathered}
\end{equation}
The second condition in \eqref{eq:staticity-hypotheses} makes $f$
a local coordinate near $B$. Its positive side belongs to $U$,
so $B$ is a smooth level surface, and
\eqref{eq:static-zero-level-relations} makes $X/n$ its unit normal.
We now compute the flux through this surface. Since
$\Div_hX=0$ and $X$ is tangent to $\mathcal S_0$,
\[
 \mathrm d(i_X\mathrm{vol}_h)=(\Div_hX)\mathrm{vol}_h=0,
 \qquad \int_{\mathcal S_0} i_X\mathrm{vol}_h=0.
\]
Because $\overline{\Sigma}_1\simeq[1,\infty)\times\mathbb S^2$,
the integral over $\mathcal S_0$ determines the cohomology class
of every closed two-form. The preceding identities therefore
make $i_X\mathrm{vol}_h$ exact, so its flux through each connected
component $B_0$ of $B$ must vanish. But with the unit normal $X/n$
from \eqref{eq:static-zero-level-relations}, this says
\[
 0=\int_{B_0}i_X\mathrm{vol}_h=\int_{B_0}n\dd A>0,
\]
a contradiction. Hence there is a point
$p\in\overline U\cap\mathcal S_0$.

We use the point $p$ to prove that $X$ vanishes on the whole
sphere. Its restriction is Killing, since $n=0$ and
$\Lie_Xh=2nk$ imply $\Lie_X(h|_{T\mathcal S_0})=0$, and
$p\in\overline U$ together with $f|_{\mathcal S_0}=-|X|^2$
gives $X(p)=0$. It remains to show that its tangential derivative
at $p$, a rotation with coefficient $\omega_p$, also vanishes.
The flow of $\T$ fixes $p$ and preserves $\mathcal S_0$, so its
derivative preserves both the tangent and normal planes.
Since $n=X=0$ at $p$,
\[
 \g(\D_{\nu_{\mathcal S_0}}\T,T_0)
       =-\partial_{\nu_{\mathcal S_0}}n.
\]
Preservation of the normal plane and the Killing equation give
\begin{equation}\label{eq:static-normal-boost}
 \D_{\nu_{\mathcal S_0}}\T
       =(\partial_{\nu_{\mathcal S_0}}n)T_0,
 \qquad
 \D_{T_0}\T
       =(\partial_{\nu_{\mathcal S_0}}n)\nu_{\mathcal S_0}.
\end{equation}
The normal derivatives in \eqref{eq:static-normal-boost} and the
tangential rotation give, in an adapted orthonormal coframe,
\begin{equation}\label{eq:static-boundary-wedge}
 \begin{aligned}
 F|_p&=(\partial_{\nu_{\mathcal S_0}}n)\,
                   \theta^0\wedge\theta^1
                  +\omega_p\,\theta^2\wedge\theta^3,\\
 (F\wedge F)|_p
   &=2\omega_p(\partial_{\nu_{\mathcal S_0}}n)\,
                   \theta^0\wedge\theta^1\wedge\theta^2\wedge\theta^3
     =0.
 \end{aligned}
\end{equation}
The vanishing of $F\wedge F$ on $\overline U$ from
\eqref{eq:static-two-form}, together with the positive normal
lapse derivative in \eqref{eq:lapse-boundary}, forces
$\omega_p=0$ in \eqref{eq:static-boundary-wedge}. Thus the
Killing field $X|_{\mathcal S_0}$ and its derivative vanish at
$p$. Restricting it to geodesics gives Jacobi fields with zero
initial data, so it vanishes on the connected sphere and
$\T|_{\mathcal S_0}=0$.

We integrate the Sudarsky--Wald identity
\eqref{eq:stationary-divergence}. On $S_R=\{r=R\}$, with outward
unit normal $\nu_R$, the decay $k=O(r^{-3})$, $X=O(r^{-2})$
from \eqref{eq:slice-asymptotic-data} makes the flux tend to zero.
Since $X=0$ on $\mathcal S_0$, we obtain
\begin{equation}\label{eq:staticity-integral}
 \begin{aligned}
 \int_{\Sigma_1}n|k|^2\dd\mu_h
   &=\lim_{R\to\infty}\int_{S_R}k(X,\nu_R)\dd A\\
   &\quad-\int_{\mathcal S_0}k(X,\nu_{\mathcal S_0})\dd A=0.
 \end{aligned}
\end{equation}
Since $n>0$, \eqref{eq:staticity-integral} gives $k=0$, and
\eqref{eq:KID} makes $X$ Killing on the slice. Its tangential
derivatives vanish on $\mathcal S_0$, and the Killing equation
then gives the normal derivative as well. The same uniqueness
argument along geodesics gives $X=0$ on the connected slice.
Thus $\T=nT_0$ and $f=n^2>0$, and the stationary translates of
the slice are orthogonal to $\T$.
\end{proof}

\subsection{The small angular momentum limit}

\begin{proof}[Proof of Theorem~\ref{thm:main}]
We prove \eqref{eq:horizon-smallness}. If this estimate has no
positive threshold $j_*(A)$ for a fixed $A$, there are smooth
spacetimes $(\g_j,\T_j,\Sigma_{1,j})$ satisfying the hypotheses with
$M_j=1$ and $\overline A_j\le A$, and angular momenta $S_j$ of
magnitude $J_j=|S_j|$, such that
\begin{equation}\label{eq:small-spin-contradiction}
 J_j\longrightarrow0,\qquad
 \sup_{\mathcal S_0}\g_j(\T_j,\T_j)
             \ge\eps_{\mathrm{AIK}}(A)^2>0.
\end{equation}
Pull back the fields by the charts of Section~\ref{subsec:assumptions}.
To retain a Lorentzian limit, we need a uniform bound for the
inverse metric. Suppress $j$ and let $h^{ij}$ denote the inverse of the spatial block
$\g_{ij}$ in these coordinates. The bounds for $h$ at
$t=0$ and for $\partial_t\g_{ij}$ in
Section~\ref{subsec:assumptions} keep $\g_{ij}$ uniformly positive
definite after decreasing the time interval uniformly in $j$. We restrict this strip to
$r>3/4$. The time-coordinate estimate \eqref{eq:coordinate-time-bound}
and $R_1\ge A_1^4$ give $-\g_{00}\ge c(A)>0$.
Putting $a^2=-\g_{00}+h^{ij}\g_{0i}\g_{0j}\ge c(A)$,
block inversion gives
\begin{equation}\label{eq:coordinate-inverse-bounds}
 \begin{aligned}
 \g^{00}&=-a^{-2},\qquad
 \g^{0i}=a^{-2}h^{ij}\g_{0j},\\
 \g^{ij}&=h^{ij}-a^{-2}h^{ik}\g_{0k}h^{j\ell}\g_{0\ell}.
 \end{aligned}
\end{equation}
The inverse metric is therefore uniformly bounded, and
repeated differentiation of $\g^{-1}\g=\operatorname{Id}$,
starting with $\partial\g^{-1}=-\g^{-1}(\partial\g)\g^{-1}$,
gives the same control through six derivatives by
\eqref{eq:effective-bound}. We can thus apply Arzel\`a--Ascoli and
pass to a diagonal subsequence for which
\begin{equation}\label{eq:stationary-compactness}
 \begin{gathered}
 \g_j\longrightarrow \g,\qquad \T_j\longrightarrow \T
        \quad\text{in }C^{5,\alpha}(K)\\
        \quad\text{for every compact }K,\qquad 0<\alpha<1.
 \end{gathered}
\end{equation}
The bound for $\g_j^{-1}$ keeps $\g$ Lorentzian, and
\eqref{eq:stationary-compactness} passes the vacuum and Killing
equations, tangency to $\mathcal S_0$, and \eqref{eq:KID} to
the limit. We use $\g,\T,h,k,n,X$ for the limiting fields
from now on. To identify their mass and angular momentum, we
retain the uniform remainders in \eqref{eq:AF}, which give,
for $|I|\le5$,
\[
 \begin{aligned}
 |\partial^I(\g_{00,j}+1-2/r)|&\le C_Ir^{-2-|I|},\\
 |\partial^I(\g_{0i,j}+2\epsilon_{iak}S_j^a x^kr^{-3})|
                                      &\le C_Ir^{-3-|I|}.
 \end{aligned}
\]
Passing to the limit at each fixed point of the end, using
\eqref{eq:stationary-compactness} and $J_j\to0$, yields
$\g_{00}=-1+2/r+O_5(r^{-2})$ and $\g_{0i}=O_5(r^{-3})$,
so the limiting fields have $M=1$ and $J=0$. The lapse bound
\eqref{eq:lapse-boundary} also passes to the limit, with
$0\le n\le1$ and $n=0$ on $\mathcal S_0$. Since the retained
asymptotics give $n\to1$ at infinity, the strong maximum
principle applied to the limiting equation in \eqref{eq:KID}
gives $0<n<1$ in the interior, and hence
$f=n^2-|X|^2<1$.

Let $U_j$ and $U$ be the components containing the end of
$\{f_j>0\}\cap\Sigma_{1,j}$ and $\{f>0\}\cap\Sigma_1$,
respectively. Given a smooth curve $c:[0,1]\to U$, choose a point
$x_*$ in the common timelike asymptotic end and join it to
$c(0)$ by a compact path in $U$. Since $f$ has a
positive minimum on the union of this path and $c$,
\eqref{eq:stationary-compactness} places the union in
$\{f_j>0\}$ for large $j$, and hence in $U_j$. In particular,
\eqref{eq:twist-smallness} gives $\psi_j\to0$ throughout $U$,
so $\sigma_j=f_j+i\psi_j\to f$ there.

To identify the limiting twist, set $F_j=\tfrac12\mathrm d\T_j^\flat$
and $F=\tfrac12\mathrm d\T^\flat$. By \eqref{eq:stationary-compactness},
\eqref{eq:vacuum-ernst}, and \eqref{eq:twist-smallness},
\begin{equation}\label{eq:limiting-twist-zero}
 \begin{aligned}
 \int_c2i_\T{}^*_{\g}F
 &=\lim_{j\to\infty}\int_c2i_{\T_j}{}^*_{\g_j}F_j\\
 &=\lim_{j\to\infty}\bigl[\psi_j(c(1))-\psi_j(c(0))\bigr]=0.
 \end{aligned}
\end{equation}
As $c$ is arbitrary, \eqref{eq:limiting-twist-zero} shows that
$2i_\T{}^*_{\g}F$ vanishes on $T\Sigma_1$ over $U$. It also
annihilates $\T$, which is timelike on $U$ and therefore
transverse to the spacelike slice. Thus $i_\T{}^*_{\g}F=0$
there, or equivalently $\T^\flat\wedge F=0$.

With $\cF=F+i{}^*_{\g}F$, we can now pass to the second identity
of \eqref{eq:Kerr-identities}, using
\eqref{eq:stationary-compactness} and $\sigma_j\to f$. On $U$
we obtain
\begin{equation}\label{eq:Schwarz-self-dual-limit}
 \cF^2=-\frac{(1-f)^4}{4},\qquad
 \cF^2=2\bigl(F_{\alpha\beta}F^{\alpha\beta}
               +iF_{\alpha\beta}({}^*F)^{\alpha\beta}\bigr).
\end{equation}
Taking real parts in \eqref{eq:Schwarz-self-dual-limit} and using continuity gives
$F_{\alpha\beta}F^{\alpha\beta}=-(1-f)^4/8$ on $\overline U$.
At an interior boundary point of $U$, we have $f=0$ and $n>0$,
so $\T$ is nonzero and null. The vanishing twist therefore gives
$F=\T^\flat\wedge Y$, and
\begin{equation}\label{eq:Schwarz-identities}
 \begin{gathered}
 F_{\alpha\beta}F^{\alpha\beta}=-2Y(\T)^2=-\tfrac18,
 \qquad \mathrm df=-2Y(\T)\T^\flat,\\
 |\mathrm df|_h^2=4Y(\T)^2|X|_h^2
       =-2F_{\alpha\beta}F^{\alpha\beta}n^2
       =\tfrac14 n^2>0.
 \end{gathered}
\end{equation}
Equation~\eqref{eq:Schwarz-identities} verifies the remaining
condition in \eqref{eq:staticity-hypotheses}, so
Lemma~\ref{lem:zero-spin-staticity}, with the lapse, asymptotics,
and boundary conditions already verified, gives $\T|_{\mathcal S_0}=0$.
Restricting \eqref{eq:stationary-compactness} to $\mathcal S_0$ gives
$\sup_{\mathcal S_0}\g_j(\T_j,\T_j)\to0$.
This contradicts \eqref{eq:small-spin-contradiction}, proving
\eqref{eq:horizon-smallness} for some $j_*(A)>0$. The Kerr
conclusion follows from
the global rigidity theorem of~\cite{AIKsmall},
with the same effective constant $A$.
\end{proof}

\appendix

\section{Verification of the Harrison construction}
\label{app:harrison}

We complete the proof of Proposition~\ref{prop:charging}, beginning
with the stationary primitive $W_0$ required by \eqref{eq:W}.
Its construction near $\mathcal S_0$ will preserve the local
horizon symmetries, which we will also use to obtain coordinates
for the transformed horizons uniformly in $q$. Between these
two constructions we verify the Einstein--Maxwell equations,
including at points where $f=0$ and the stationary reduction
is unavailable.

\subsection{The stationary primitive at the bifurcation sphere}

We verify positivity of the surface gravity in the normalization
\eqref{eq:local-circle}. At the chosen fixed point $p$ of the circle
action, let $\nu$ be the unit normal to $\mathcal S_0$ in
$\overline\Sigma_1$ pointing into $\Sigma_1$.
Then $\D_\nu\Z=0$, since a compact, time-oriented group of
boosts acts trivially on the Lorentzian normal plane.
Thus $\D_\nu\K=\kappa T_0$ and the boundary
derivative in \eqref{eq:lapse-boundary} give
\begin{equation}\label{eq:surface-gravity-lapse}
 \kappa=-\g(\D_\nu\T,T_0)|_p=\nu(n)|_p>0.
\end{equation}
To construct $W_0$ with the same symmetries, we first check
that $\psi$ is invariant. Since $\K,\Z$ preserve $\g$ and
commute with $\T$, they preserve
$\mathrm d\psi=2i_\T{}^*F$ in \eqref{eq:vacuum-ernst}, so
$\mathrm d(\Z\psi)=0$. The constant $\Z\psi$ must vanish by
periodicity, and $\T\psi=0$ then implies $\K\psi=0$.
We may therefore use circle averaging and radial integration
in the normal bundle without losing these symmetries.

Denote the closed two-form in \eqref{eq:W} by
\begin{equation}\label{eq:source}
 \Theta_0=4\bigl((f-1)F+\psi{}^*F\bigr).
\end{equation}
Contracting $F\wedge F+{}^*F\wedge{}^*F=0$ with $\T$ gives
$\mathrm df\wedge F=-\mathrm d\psi\wedge{}^*F$ and hence
\begin{equation}\label{eq:primitive-compatibility}
 \mathrm d\Theta_0=0,\qquad i_\T\Theta_0=\mathrm dD_0.
\end{equation}
The contraction identity is the calculation \eqref{eq:W-contraction}.
The spatial decay \eqref{eq:primitive-source-decay} and
closedness in \eqref{eq:primitive-compatibility} give
\begin{equation}\label{eq:primitive-periods}
 \int_{\mathcal S_0}\Theta_0=\int_{S_r}\Theta_0
                     =\lim_{R\to\infty}\int_{S_R}\Theta_0=0.
\end{equation}

The normal exponential map represents nearby points as
$\exp_p\xi$, where $p\in\mathcal S_0$ and $\xi$ is normal to
$\mathcal S_0$ at $p$. In these coordinates define the retraction
$\pi(p,\xi)=p$, the scaling $\delta_\lambda(p,\xi)=(p,\lambda\xi)$, and
$\mathcal R=\xi^\alpha\partial_{\xi^\alpha}$.
Equation~\eqref{eq:primitive-periods} gives a smooth one-form $\beta_0$ on
$\mathcal S_0$ with $\mathrm d\beta_0=\Theta_0|_{\mathcal S_0}$.
Averaging $\beta_0$ over the circle preserves this identity,
since $\Theta_0$ is invariant. We then extend $\beta_0$ to the
normal bundle by the relative Poincar\'e homotopy formula,
\begin{equation}\label{eq:Wbar}
 W_0=\pi^*\beta_0+
       \int_0^1\lambda^{-1}\delta_\lambda^*
                        (i_{\mathcal R}\Theta_0)\,\mathrm d\lambda .
\end{equation}
The integrand extends smoothly to $\lambda=0$, and Cartan's
formula gives
\begin{equation}\label{eq:primitive-homotopy-derivative}
 \begin{aligned}
 \mathrm dW_0
 &=\pi^*\mathrm d\beta_0+
       \int_0^1\partial_\lambda(\delta_\lambda^*\Theta_0)\,\mathrm d\lambda\\
 &=\pi^*(\Theta_0|_{\mathcal S_0})+
       \Theta_0-\pi^*(\Theta_0|_{\mathcal S_0})=\Theta_0.
 \end{aligned}
\end{equation}
The construction \eqref{eq:Wbar} also preserves both symmetries.
Normal scaling commutes with isometries preserving $\mathcal S_0$,
the circle preserves $\beta_0$, and the $\K$ flow fixes
$\mathcal S_0$ pointwise. Thus Cartan's identity and
\eqref{eq:primitive-compatibility} give
\begin{equation}\label{eq:primitive-symmetries}
 \begin{gathered}
 \Lie_\K W_0=\Lie_\Z W_0=0,\\
 \mathrm d\bigl(W_0(\T)+D_0\bigr)
    =\Lie_\T W_0-i_\T\mathrm dW_0+\mathrm dD_0=0.
 \end{gathered}
\end{equation}
Evaluating the constant $W_0(\T)+D_0$ at a zero of $\T$
proves \eqref{eq:W}, with the same constant at either pole.

In the exterior write
$W_0=(D_0(p)-D_0)\,\mathrm dt+(W_0)_{\mathrm{sp}}$.
The equation
\[
 \mathrm d(W_0)_{\mathrm{sp}}
                   =\Theta_0-\mathrm dt\wedge\mathrm dD_0
\]
has a stationary right-hand side whose contraction with $\T$
vanishes. It descends to $\Sigma_1$, where
\eqref{eq:primitive-periods} gives a primitive. We choose its decay explicitly.
The expansion \eqref{eq:ernst-end-expansion} gives the decay
of $f-1$ and $\psi$. In the stationary chart of \eqref{eq:AF},
$F_{ij}=\tfrac12(\partial_i\g_{0j}-\partial_j\g_{0i})$ and
$F_{0i}=-\tfrac12\partial_i\g_{00}=O_5(r^{-2})$.
The Hodge-star formula then gives
\[
 f-1=O_6(r^{-1}),\quad \psi=O_6(r^{-2}),\quad
 F_{ij}=O_5(r^{-3}),\quad
 ({}^*F)_{ij}=O_5(r^{-2}).
\]
Substitution in \eqref{eq:source} gives
\begin{equation}\label{eq:primitive-source-decay}
 (\Theta_0)_{ij}=O_5(r^{-4}).
\end{equation}
The radial primitive is
\begin{equation}\label{eq:primitive-radial-formula}
 (W_0)_i(x)=-\int_1^\infty
          s x^j(\Theta_0)_{ji}(sx)\,\mathrm ds .
\end{equation}
Differentiating \eqref{eq:primitive-radial-formula} and using
closedness of the restriction of $\Theta_0$ to the stationary
slice gives
\begin{equation}\label{eq:primitive-radial-curl}
 \begin{aligned}
 \partial_i(W_0)_j-\partial_j(W_0)_i
 &=-\int_1^\infty\frac{\mathrm d}{\mathrm ds}
                \bigl[s^2(\Theta_0)_{ij}(sx)\bigr]\,\mathrm ds\\
 &=(\Theta_0)_{ij}(x).
 \end{aligned}
\end{equation}
The boundary term at infinity vanishes by
\eqref{eq:primitive-source-decay}. Differentiating
\eqref{eq:primitive-radial-formula} and using the same decay gives
\begin{equation}\label{eq:primitive-radial-decay}
 |\partial^I(W_0)_i(x)|
 \le C_I r^{-3-|I|}\int_1^\infty s^{-3}\,\mathrm ds,
 \qquad |I|\le5.
\end{equation}
We now modify the global primitive to agree with the local
choices \eqref{eq:primitive-radial-formula} near infinity and
\eqref{eq:Wbar} near $\mathcal S_0$. Its difference from each
local primitive is closed by
\eqref{eq:primitive-radial-curl} and
\eqref{eq:primitive-homotopy-derivative}, and both the end and
the exterior collar have vanishing first cohomology. On the end, an exact
correction with a cutoff therefore gives agreement with
\eqref{eq:primitive-radial-formula} and the required decay
\eqref{eq:primitive-radial-decay}. On the collar, write the
needed correction as $\mathrm d\chi$, with $\T\chi=0$, and
add $\mathrm d(\eta\chi)$, where $\eta$ is stationary and
equals one near $\mathcal S_0$. For this last cutoff we use the
original stationary collar constructed at the end of the appendix.
At $q=1$, that construction depends only on $\g,\K,\Z$ and
the original slice because $W_1=0$, so it is available before
the choice of $W_0$.

\subsection{The Einstein--Maxwell equations}

Having recovered the norm and electromagnetic potentials in
\eqref{eq:charged-norm} and \eqref{eq:charged-potential},
we now recover the twist and verify the three reduced equations
where $f\ne0$. We then return to the spacetime formulas to
include points where $\T$ is null.

Where $f\ne0$, orient the orbit metric by
$\operatorname{vol}_\gamma=f\,i_\T\operatorname{vol}_\g$.
In both signatures of $\gamma$, $*_\gamma^2=1$ on one- and
two-forms. With the twist convention
\eqref{eq:vacuum-ernst}, the stationary identities
in~\cite{CCH} are
\begin{equation}\label{eq:stationary-orbit-form-identities}
 \mathrm d\vartheta=-f^{-2}*_\gamma\mathrm d\psi,\qquad
 {}^*F=-\frac1{2f}*_\gamma\mathrm df
                     +\tfrac12(\mathrm dt+\vartheta)\wedge\mathrm d\psi .
\end{equation}
Substituting \eqref{eq:stationary-orbit-form-identities} and
$F=\tfrac12(\mathrm dt+\vartheta)\wedge\mathrm df
       -\tfrac f2\mathrm d\vartheta$ into
\eqref{eq:charged-one-form}, we compute the charged connection
\begin{equation}\label{eq:connection-curl}
 \begin{aligned}
 \mathrm d\{D_q(\mathrm dt+\vartheta)+W_q\}
  &=\mathrm dD_q\wedge(\mathrm dt+\vartheta)
       +(\mathrm dt+\vartheta)\wedge\mathrm dD_q\\
  &\quad+\bigl[D_q-2(1-q)f\bigl((1-q)f-(1+q)\bigr)\bigr]
                                       \,\mathrm d\vartheta\\
  &\qquad-\frac{2(1-q)^2\psi}{f}*_\gamma\mathrm df\\
  &=\bigl((1+q)^2-(1-q)^2f^2+(1-q)^2\psi^2\bigr)
                                       \,\mathrm d\vartheta\\
  &\qquad-\frac{2(1-q)^2\psi}{f}*_\gamma\mathrm df .
 \end{aligned}
\end{equation}
The potentials in \eqref{eq:harrison} satisfy
\begin{equation}\label{eq:harrison-derivatives}
 \begin{gathered}
 \sigma_q'=\frac{4q}{(1+q-(1-q)\sigma)^2},\qquad
 \Phi_q'=-\frac{2\sqrt{1-q^2}}{(1+q-(1-q)\sigma)^2},\\
 \begin{aligned}
 \mathrm d\sigma_q+2\overline\Phi_q\,\mathrm d\Phi_q
 &=\frac{4\bigl[q(1+q-(1-q)\overline\sigma)
                   +(1-q^2)(1+\overline\sigma)\bigr]}
             {(1+q-(1-q)\sigma)D_q}\,\mathrm d\sigma\\
 &=\frac{4\bigl((1+q)^2-(1-q)^2\overline\sigma^{\,2}\bigr)}{D_q^2}
                                                  \,\mathrm d\sigma\\
 &=f_q\left(\frac1f+\frac{2(1-q)}{1+q-(1-q)\sigma}\right)
                                                  \mathrm d\sigma.
 \end{aligned}
 \end{gathered}
\end{equation}
Combining \eqref{eq:transformed-connection} with
\eqref{eq:connection-curl} and the Hodge identities
\eqref{eq:stationary-orbit-form-identities}, and then taking
the imaginary part of \eqref{eq:harrison-derivatives}, gives
\begin{equation}\label{eq:charged-twist}
 \begin{aligned}
 f_q^2*_\gamma\mathrm d\vartheta_q
 &=-\frac4{D_q^2}\Bigl[
   \bigl((1+q)^2-(1-q)^2f^2+(1-q)^2\psi^2\bigr)\,\mathrm d\psi\\
 &\hspace{7em}{}+2(1-q)^2f\psi\,\mathrm df\Bigr]\\
 &=-\mathrm d\operatorname{Im}\sigma_q
       -2\operatorname{Im}(\overline\Phi_q\,\mathrm d\Phi_q).
 \end{aligned}
\end{equation}
Equation~\eqref{eq:charged-twist} verifies the twist identity
\eqref{eq:em-potentials} for the transformed fields. At $q=0$
it also gives the formula for $\mathrm d\vartheta_0$ in
\eqref{eq:limiting-connection}.

The reduced Einstein--Maxwell equations in~\cite{CCH}, with
the potential convention \eqref{eq:em-potentials} verified by
\eqref{eq:charged-potential} and \eqref{eq:charged-twist}, are
\begin{equation}\label{eq:reduced-electrovacuum}
 \begin{aligned}
 f_q\Delta_\gamma\sigma_q
  &=\langle\mathrm d\sigma_q+2\overline\Phi_q\,\mathrm d\Phi_q,
                                      \mathrm d\sigma_q\rangle_\gamma,\\
 f_q\Delta_\gamma\Phi_q
  &=\langle\mathrm d\sigma_q+2\overline\Phi_q\,\mathrm d\Phi_q,
                                      \mathrm d\Phi_q\rangle_\gamma,\\
 \Ric(\gamma)
  &=\frac1{2f_q^2}\operatorname{Re}
    \big[(\mathrm d\sigma_q+2\overline\Phi_q\,\mathrm d\Phi_q)
       \otimes(\mathrm d\overline\sigma_q+2\Phi_q\,\mathrm d\overline\Phi_q)\big]\\
  &\qquad-\frac2{f_q}\operatorname{Re}
                         (\mathrm d\Phi_q\otimes\mathrm d\overline\Phi_q).
 \end{aligned}
\end{equation}
Here $\langle\cdot,\cdot\rangle_\gamma$ is complex bilinear,
and bars denote complex conjugation. We verify
\eqref{eq:reduced-electrovacuum} using the vacuum equations
$f\Delta_\gamma\sigma=\langle\mathrm d\sigma,\mathrm d\sigma\rangle_\gamma$
and
$\Ric(\gamma)=(2f^2)^{-1}\operatorname{Re}
(\mathrm d\sigma\otimes\mathrm d\overline\sigma)$.
Since the transformed potentials are functions of $\sigma$,
the scalar equations follow by the chain rule, while the Ricci
equation requires its right-hand side to reduce to the vacuum
expression with the same orbit metric.
For either $u=\sigma_q$ or $u=\Phi_q$, including the constant
endpoint cases, differentiation gives
$u''=2(1-q)u'/(1+q-(1-q)\sigma)$. Substituting the vacuum
equation and \eqref{eq:harrison-derivatives} gives
\begin{equation}\label{eq:harrison-scalar-verification}
 \begin{aligned}
 f_q\Delta_\gamma u
 &=f_q\bigl(u''\langle\mathrm d\sigma,\mathrm d\sigma\rangle_\gamma
                              +u'\Delta_\gamma\sigma\bigr)\\
 &=f_qu'\left(\frac1f+\frac{2(1-q)}{1+q-(1-q)\sigma}\right)
                   \langle\mathrm d\sigma,\mathrm d\sigma\rangle_\gamma\\
 &=\langle\mathrm d\sigma_q+2\overline\Phi_q\,\mathrm d\Phi_q,
                                      \mathrm du\rangle_\gamma.
 \end{aligned}
\end{equation}
In the Ricci equation of \eqref{eq:reduced-electrovacuum},
formula \eqref{eq:harrison-derivatives} reduces the coefficient
of $\operatorname{Re}(\mathrm d\sigma\otimes\mathrm d\overline\sigma)$ to
\begin{equation}\label{eq:harrison-Ricci-verification}
 \frac{|1+q+(1-q)\sigma|^2-4(1-q^2)f}{2f^2D_q}
 =\frac{|1+q-(1-q)\sigma|^2}{2f^2D_q}
 =\frac1{2f^2}.
\end{equation}
Equations~\eqref{eq:harrison-scalar-verification} and
\eqref{eq:harrison-Ricci-verification} prove
\eqref{eq:reduced-electrovacuum} where $f\ne0$. Their extension
to an open null region is a calculation at one point. We
approximate the values and first two derivatives of $\g,\psi,W_0$
by stationary data satisfying the vacuum and potential equations
at that point, with $f>0$. No local vacuum solution realizing
these data is needed. The spacetime formulas and their field
equations depend continuously on these derivatives when
$D_q>0$ and $D_q+W_q(\T)>0$, so they pass to the limit.
We first change $\g_{tt}$ to make $\T$ timelike, then correct
spatial second derivatives to restore the ten vacuum equations.

Since $\T\ne0$ in the exterior, choose local coordinates with
$\T=\partial_t$. All metric derivatives containing a $t$
derivative vanish. At fixed metric and first derivatives, the
Einstein tensor is affine in the second derivatives. Denote by
$P_\eta H$ its change under
$\delta(\partial_a\partial_b\g_{\mu\nu})
=\eta_a\eta_bH_{\mu\nu}$, where $H$ is symmetric and
$\eta(\T)=0$. Raising indices with $\g$, the change in the Einstein tensor is
\begin{equation}\label{eq:stationary-einstein-symbol}
 \begin{split}
 2(P_\eta H)_{\mu\nu}={}&
 \eta_\mu\eta^\rho H_{\rho\nu}+\eta_\nu\eta^\rho H_{\rho\mu}
 -\eta^2H_{\mu\nu}-\eta_\mu\eta_\nu\operatorname{tr}_{\g}H\\
 &-\g_{\mu\nu}\bigl(\eta^\rho\eta^\sigma H_{\rho\sigma}
                  -\eta^2\operatorname{tr}_{\g}H\bigr),
 \qquad \eta^2=\g^{-1}(\eta,\eta).
 \end{split}
\end{equation}
Contraction with $\eta^\mu$ gives
$(P_\eta H)(\eta^\sharp,\cdot)=0$. Conversely, if
$B(\eta^\sharp,\cdot)=0$ and $\eta^2\ne0$, set
\[
 H=-\frac2{\eta^2}
       \left(B-\tfrac12\operatorname{tr}_{\g}(B)\g\right).
\]
Its contractions and substitution in
\eqref{eq:stationary-einstein-symbol} give
\begin{equation}\label{eq:stationary-symbol-inverse}
 \begin{gathered}
 \operatorname{tr}_{\g}H=\frac{2\operatorname{tr}_{\g}B}{\eta^2},
 \qquad H(\eta^\sharp,\cdot)
                 =\frac{\operatorname{tr}_{\g}B}{\eta^2}\eta,
 \qquad H(\eta^\sharp,\eta^\sharp)=\operatorname{tr}_{\g}B,\\
 2P_\eta H
 =\bigl(2B-(\operatorname{tr}_{\g}B)\g\bigr)
       +(\operatorname{tr}_{\g}B)\g=2B.
 \end{gathered}
\end{equation}
The three contractions in \eqref{eq:stationary-symbol-inverse}
cancel the terms proportional to $\eta\otimes\eta$ in
\eqref{eq:stationary-einstein-symbol}. Together with
$(P_\eta H)(\eta^\sharp,\cdot)=0$, the inverse computation proves
\begin{equation}\label{eq:stationary-symbol-range}
 \operatorname{Im}P_\eta
   =\{B:B(\eta^\sharp,\cdot)=0\}.
\end{equation}
Choose independent covectors $\alpha,\beta$ in a positive definite
two-plane in the annihilator of $\T$. By \eqref{eq:stationary-symbol-range}, the annihilator of
$\operatorname{Im}P_\eta$ under the metric pairing consists of the
symmetric products $\eta\odot v=(\eta\otimes v+v\otimes\eta)/2$.
In the cotangent space at the chosen point $x$,
\begin{equation}\label{eq:stationary-symbol-annihilators}
 (\alpha\odot T_x^*\M)\cap(\beta\odot T_x^*\M)
       \cap((\alpha+\beta)\odot T_x^*\M)=\{0\}.
\end{equation}
Indeed the first two spaces intersect in the line spanned by
$\alpha\odot\beta$, which has zero intersection with the third.
Taking annihilators in \eqref{eq:stationary-symbol-annihilators}
and using \eqref{eq:stationary-symbol-range} shows that the
ranges of $P_\alpha$, $P_\beta$, and $P_{\alpha+\beta}$ span all
symmetric two-tensors.
We use this surjectivity to correct the perturbed metric data.
Choose ten pairs $(\eta_a,H_a)$, with
$\eta_a\in\{\alpha,\beta,\alpha+\beta\}$, whose images
$P_{\eta_a}H_a$ form a basis of the symmetric two-tensors.
The corresponding matrix remains invertible, with bounded
inverse, for nearby metric values. If we change $\g_{tt}$ to
$-\epsilon$ and initially keep its first and second derivatives
fixed, the Einstein tensor becomes $E_\epsilon=O(\epsilon)$,
since the original data are vacuum. Computing $P_{\eta_a}$
at the perturbed metric value, we can therefore solve for
$c_\epsilon\in\mathbb R^{10}$ so that
\begin{equation}\label{eq:stationary-vacuum-correction}
 \begin{gathered}
 \sum_{a=1}^{10}c_\epsilon^aP_{\eta_a}H_a=-E_\epsilon,
       \qquad |c_\epsilon|\le C|E_\epsilon|=O(\epsilon),\\
 \delta(\partial_i\partial_j\g_{\mu\nu})
   =\sum_{a=1}^{10}c_\epsilon^a(\eta_a)_i(\eta_a)_j(H_a)_{\mu\nu}
   =O(\epsilon).
 \end{gathered}
\end{equation}
Because the Einstein tensor is affine in the second derivatives,
\eqref{eq:stationary-vacuum-correction} restores the vacuum
equations exactly. The correction introduces no time derivatives,
as $\eta_a(\T)=0$, and tends to zero with $\epsilon$.
We have thus obtained stationary vacuum data converging through
order two to the original data, with $f_\epsilon=\epsilon>0$.

We must make the same correction for $\psi$ and $W_0$, since
the transformed equations depend on their derivatives as well.
Their compatibility follows from the vacuum equations restored
in \eqref{eq:stationary-vacuum-correction}. Indeed,
\[
 \mathrm d(2i_\T{}^*F_\epsilon)
   =2\Lie_\T{}^*F_\epsilon-2i_\T\mathrm d{}^*F_\epsilon=0
\]
at the point. Keeping the value of $\psi$ fixed, the one-form
$2i_\T{}^*F_\epsilon$ and its first derivative therefore prescribe
the gradient and symmetric Hessian of $\psi_\epsilon$. They depend
continuously on $\epsilon$ and have vanishing time derivatives.
Define $\Theta_{0,\epsilon}$ by \eqref{eq:source} using the
corrected fields. The calculation \eqref{eq:primitive-compatibility}
gives
\begin{equation}\label{eq:primitive-jet-compatibility}
 \mathrm d\Theta_{0,\epsilon}=0,\qquad
 i_\T\Theta_{0,\epsilon}=\mathrm dD_{0,\epsilon},
\end{equation}
with $i_\T\Theta_{0,\epsilon}=\mathrm dD_{0,\epsilon}$ also valid
through one derivative. Keep $D_0(p)$ fixed and
set $(W_{0,\epsilon})_t=D_0(p)-D_{0,\epsilon}$ through order two.
For the spatial part, center the coordinates at the point and write
$\delta\Theta=\Theta_{0,\epsilon}-\Theta_0$.
By \eqref{eq:primitive-jet-compatibility}, the linear Taylor
polynomial of its spatial part is closed. Applying the homotopy
formula \eqref{eq:primitive-homotopy-derivative} with retraction
to the origin gives the correction
\begin{equation}\label{eq:primitive-jet-correction}
 \begin{aligned}
 (\delta W)_i(x)
  &=\int_0^1t x^j\bigl(\delta\Theta_{ji}(0)
              +t x^k\partial_k\delta\Theta_{ji}(0)\bigr)\,\mathrm dt\\
  &=\tfrac12x^j\delta\Theta_{ji}(0)
       +\tfrac13x^jx^k\partial_k\delta\Theta_{ji}(0).
 \end{aligned}
\end{equation}
Thus $\mathrm d(\delta W)$ agrees with the spatial part of
$\delta\Theta$ through order one. The time-space components
agree by the contraction identity in
\eqref{eq:primitive-jet-compatibility}, valid through one derivative. Thus $W_{0,\epsilon}\to W_0$ through order two, with
$W_{0,\epsilon}(\T)=D_0(p)-D_{0,\epsilon}$.

Apply \eqref{eq:harrison-scalar-verification},
\eqref{eq:harrison-Ricci-verification},
\eqref{eq:charged-potential}, and \eqref{eq:charged-twist}
to the corrected data. The convergence through order two in
\eqref{eq:stationary-vacuum-correction} and
\eqref{eq:primitive-jet-correction} then gives the
Einstein--Maxwell equations at the original null point, since
\eqref{eq:metric-expanded} and \eqref{eq:maxwell} have no
division by $f$. This proves the equations wherever $\T\ne0$.
They extend to its zeros by smoothness of $\g_q,\F_q$ and
density of $\{\T\ne0\}$ for a nontrivial Killing field.

Changing the primitive by an exact stationary one-form amounts
to changing the time coordinate. For $\T\chi=0$,
the stationary time change
$\Psi(t,x)=(t+\chi(x)/D_q(p),x)$ satisfies
\[
 \mathrm d\Psi(v)+D_q^{-1}W_q(\mathrm d\Psi(v))\T
       =v+D_q^{-1}(W_q+\mathrm d\chi)(v)\T .
\]
Substitution in \eqref{eq:metric-expanded} and \eqref{eq:maxwell}
therefore shows that adding an exact stationary form to $W_q$
pulls back both fields by this coordinate change.
\subsection{The horizon collar}
\label{subsec:collar}

\begin{lemma}\label{lem:harrison-collar}
For each $q\in[0,1]$, there is an outward unit spacelike normal
field along $\mathcal S_0$ whose geodesics form a spacelike
exterior collar, parametrized by length $\rho$ and base point $y$.
This collar meets each $\T$-orbit through
it once. Its $\K$-translates have coordinates $(t,\rho,y)$ with
$0<\rho<\rho_0$, in which
\begin{equation}\label{eq:ADM-collar}
 \begin{gathered}
 \g_q=\mathrm d\rho^2+
 (h_\rho)_{AB}(\mathrm dy^A+\beta^A\mathrm dt)
                    (\mathrm dy^B+\beta^B\mathrm dt)-a^2\mathrm dt^2,\\
 \partial_t=\K,\qquad -\infty<t<\infty .
 \end{gathered}
\end{equation}
The slices $t=\mathrm{const}$ are spacelike, and
\begin{equation}\label{eq:collar-orders}
 \begin{gathered}
 a=\kappa\rho+O(\rho^3),\qquad
 \beta=O(\rho^2),\qquad \partial_\rho\beta=O(\rho),\\
 h_\rho=h_0+O(\rho^2),\qquad
 \partial_\rho h_\rho=O(\rho).
 \end{gathered}
\end{equation}
Here $\T(\rho)=0$ and, at $q=1$, the restriction of $\rho$ to
$\overline\Sigma_1$ is a smooth defining function of $\mathcal S_0$.
Regular normal coordinates $(x^0,x^1,y)$ extend across the sphere.
On the exterior part of their neighbourhood they satisfy
\begin{equation}\label{eq:collar-boost-coordinates}
 x^0=\rho\sinh(\kappa t),\qquad
 x^1=\rho\cosh(\kappa t).
\end{equation}
Here $\partial_{x^0},\partial_{x^1}$ form an orthonormal normal
frame on $\mathcal S_0$, with $\partial_{x^0}$ future timelike.
The regular coordinates and the coefficients in
\eqref{eq:ADM-collar} depend smoothly on $q$, with differentiated
remainder bounds uniform for $q\in[0,1]$.
\end{lemma}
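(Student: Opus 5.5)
The plan is to construct the collar from the bifurcate Killing horizon structure that Proposition~\ref{prop:charging} preserves for every $q$. We then read off the expansions \eqref{eq:collar-orders} from the standard normal form of a Killing field near a nondegenerate bifurcation surface.

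Fix $q$. By Proposition~\ref{prop:charging}, $\K$ and $\Z$ remain Killing for $\g_q$ near $\mathcal S_0$, and $\K$ vanishes on $\mathcal S_0$ with the same surface gravity $\kappa>0$ given by \eqref{eq:surface-gravity-lapse}. So $\D^{(q)}\K|_{\mathcal S_0}$ is $\kappa$ times the boost generator of the normal plane. The metrics $\g_q$ depend smoothly on $q$ by the same proposition.

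\textbf{Step 1: regular normal coordinates.} Choose a smooth $\Z$-invariant orthonormal normal frame $(E_0,E_1)$ along $\mathcal S_0$, with $E_0$ future timelike and $E_1$ pointing into the exterior. For $q=1$, take $E_1=\nu_{\mathcal S_0}$, the normal within $\overline\Sigma_1$. For general $q$, take the $\g_q$-orthonormalization of the $q=1$ frame, which is smooth in $q$. Set $(x^0,x^1,y)\mapsto\exp^{(q)}_y(x^0E_0+x^1E_1)$. These are regular coordinates across $\mathcal S_0$, smooth in $q$.

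\textbf{Step 2: the boost form of the $\K$-flow.} The flow of $\K$ fixes $\mathcal S_0$ pointwise and acts on its normal bundle by $e^{\kappa t}$-boosts. It therefore commutes with the normal exponential map, so $\Psi_{t,\K}$ acts on $(x^0,x^1)$ by the linear boost of rapidity $\kappa t$. On the exterior wedge $x^1>|x^0|$, this gives $\rho=\sqrt{(x^1)^2-(x^0)^2}$ and $\kappa t=\operatorname{arctanh}(x^0/x^1)$, which is \eqref{eq:collar-boost-coordinates}.

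The slice $x^0=0$ is the collar of geodesics with initial velocity $E_1$, parametrized by length $\rho$. Its $\K$-translates are the hypersurfaces $t=\mathrm{const}$. Each $\K$-orbit in the wedge is a hyperbola meeting $x^0=0$ exactly once, which proves the "once" claim for $\K$-orbits. The claim for $\T$-orbits follows as in \eqref{eq:corotating-graph}: since $\T=\K-\Omega_H\Z$ and $\Z$ has bounded circle orbits preserving $t$, the function $t$ is strictly monotone along $\T$-orbits. Because $\Z$ preserves $\rho$, we also get $\T(\rho)=\K(\rho)=0$.

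\textbf{Step 3: the metric form.} The Gauss lemma along the normal geodesics gives $\g_q(\partial_\rho,\partial_\rho)=1$ and $\g_q(\partial_\rho,\partial_{y})=0$ on $t=0$. Invariance under $\K$ then gives the same at every $t$. The remaining component $\g_q(\partial_\rho,\partial_t)$ vanishes because $\partial_t=\K$ is tangent to the level sets of the $\K$-invariant function $\rho$, and $\partial_\rho$ is the gradient of $\rho$. This yields \eqref{eq:ADM-collar}, with $a^2-|\beta|^2_{h_\rho}=-\g_q(\K,\K)$.

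\textbf{Step 4: the orders in \eqref{eq:collar-orders}.} Write $\K$ in regular coordinates as $\kappa(x^1\partial_{x^0}+x^0\partial_{x^1})+O(|x|^2)$. We need one extra power in the normal components and two powers in the tangential part $\beta$. For the tangential part, $\beta^A$ vanishes to first order because $\D\K$ has no tangential component on $\mathcal S_0$, where $\K$ generates a pure boost. Evenness under the reflection $(x^0,x^1)\mapsto(-x^0,-x^1)$, together with the $\K$-invariance of $\beta$ along the hyperbola, then gives $\beta=O(\rho^2)$. A Taylor expansion of $\exp$ in the normal variables gives $h_\rho=h_0+O(\rho^2)$, since the first-order term is the null second fundamental form, which vanishes on both branches by nonexpansion and Killing-horizon shear freeness. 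Finally, $a=\kappa\rho+O(\rho^3)$ follows from the parity of the lapse in the boost coordinates.

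Smooth $q$-dependence and uniformity follow from smooth dependence of $\exp^{(q)}$ and $\kappa$ on $q$ on a fixed compact neighbourhood, where $D_q\ge\tfrac12$.

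At $q=1$, $\rho|_{\overline\Sigma_1}$ is a defining function because $\overline\Sigma_1$ is tangent to $E_1$ at $\mathcal S_0$, so $\partial_\nu\rho=1$ there.

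\textbf{Main obstacle.} I expect Step 4, specifically the cubic order of $a$ and the quadratic order of $\beta$. These require knowing that the second-order jet of $\g_q$ in the normal directions has the symmetric form dictated by the bifurcate Killing horizon. I would derive this from the Killing equation, using $\D\D\K=\R\cdot\K$, which vanishes on $\mathcal S_0$ to the needed order, rather than from explicit $q$-formulas.
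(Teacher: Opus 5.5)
The local part of your plan (regular normal coordinates, equivariance of the normal exponential map under the boost action of $\K$, radial orthogonality, Jacobi-field orders) matches the paper. There is, however, a genuine gap in the global assertions: $-\infty<t<\infty$, and ``the collar meets each $\T$-orbit through it once''.

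Your Step~2 is valid only inside the regular normal neighbourhood, and every orbit $x^1=\rho\cosh(\kappa t)$ leaves that neighbourhood once $|t|$ is large. Beyond it you have no control, for two reasons.
\begin{itemize}
\item The rotational field $\Z$ is supplied by local rigidity only near $\mathcal S_0$. So the flow of $\K=\T+\Omega_H\Z$ is not yet known to exist for all times on the collar.
\item A complete $\T$-orbit that leaves the neighbourhood could re-enter the collar. Your monotonicity of $t$ along $\T$-orbits presupposes that $t$ is a single-valued function on a $\T$-invariant region containing the whole orbit, which is precisely what has to be shown.
\end{itemize}
This cannot come from the local normal form; it needs the global structure of $\DOC$.

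The paper supplies it at $q=1$ by comparison with the maximal slice. Using a circle-invariant frame, it writes $\overline\Sigma_1$ near $\mathcal S_0$ as $t=v_0(\rho,y)$. Then \eqref{eq:collar-orbit-transversality}--\eqref{eq:collar-graph-conormal}, together with the Hopf estimate \eqref{eq:lapse-boundary}, give $1+\Omega_H\Z v_0>0$ up to $\rho=0$, where it equals $\kappa^{-1}\partial_\rho n$. Hence the stationary projection $(\rho,y)\mapsto(\rho,\Psi_{\Omega_Hv_0,\Z}y)$ is a degree-one local diffeomorphism of each sphere, and therefore a diffeomorphism. Every complete stationary orbit meets $\Sigma_1$ once by Lemma~\ref{lem:stationary-product}. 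So the zero-time collar is a global section, and $\mathbb R\times\{t=0,\ 0<\rho<\rho_0\}\to\DOC$ is injective. Only then is $\Z$ extended by stationary translation, which makes $\K$ complete. For $q<1$ the $\g_q$-geodesic collar is a different slice. The paper writes it as $t=b_q(r,Y)$ with $b_q=O(r)$ and shrinks the collar uniformly in $q$, so that the same projection remains a diffeomorphism.

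Smaller points:
\begin{itemize}
\item $\nu_{\mathcal S_0}$ need not be $\Z$-invariant, since $\Sigma_1$ is not assumed invariant under the local rotation. Your choice $E_1=\nu_{\mathcal S_0}$ therefore conflicts with Step~1 and with $\Z(t)=0$. The defining-function property holds for any frame: spacelikeness gives $x^0=x^1u$ with $|u(0,y)|<1$, so $\rho=x^1\sqrt{1-u^2}$.
\item Orthonormalizing the $q=1$ frame for $\g_q$ gives a $\g_q$-normal frame only because $W_q(N)=\T^\flat(N)=0$ for normal $N$. This uses that $W_0$ is tangential at $\mathcal S_0$ by \eqref{eq:Wbar}, whence $\g_q(N,\cdot)=\tfrac{D_q}4\g(N,\cdot)$.
\item The map $(x^0,x^1)\mapsto(-x^0,-x^1)$ is not an isometry, and in fact $\beta^A=\kappa\zeta_A\rho^2+O(\rho^3)$ with $\zeta_A$ the normal connection of $\mathcal S_0$. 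So parity is not the mechanism. The orders follow, as in your fallback and in the paper, from the Jacobi data: $\K(0)=0$, $\D^{(q)}_v\K(0)=\kappa e_{0,q}$, and $\D^{(q)}_v\D^{(q)}_v\K=0$ at $\rho=0$, together with $J_A=e_A-\rho\zeta_Ae_{0,q}+O(\rho^2)$ from total geodesy.
\end{itemize}
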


We first show that the same bifurcate horizon survives the
transformation. The invariance of $W_0$ in \eqref{eq:primitive-symmetries},
together with that of $\g,\T,\psi$, gives from
\eqref{eq:metric-expanded} and \eqref{eq:maxwell},
\begin{equation}\label{eq:charged-horizon-symmetries}
 \Lie_\K\g_q=\Lie_\Z\g_q=0,\qquad
 \Lie_\K\F_q=\Lie_\Z\F_q=0.
\end{equation}
On either horizon branch, invariance along the $\K$ flow makes
$W_0(\K)$ equal to its limit at $\mathcal S_0$. Since $\K=0$ there,
this limit vanishes. Also $\g(\K,\T)=0$, since $\K$ is normal and
$\T$ tangent to the horizon. Thus $W_q(\K)=0$. The metric formula gives
\begin{equation}\label{eq:charged-horizon-norm}
 \begin{aligned}
 \g_q(\K,\K)
 &=\frac{D_q}{4}\g(\K,\K)
       +\frac12\g(\T,\K)W_q(\K)
       -\frac f{4D_q}W_q(\K)^2,\\
 \g_q(\K,\cdot)&=\frac{D_q}{4}\g(\K,\cdot)
                                  \quad\text{on }\mathcal H^+.
 \end{aligned}
\end{equation}
Since $\g(\K,\T)=W_q(\K)=0$ on the horizon, differentiating
\eqref{eq:charged-horizon-norm} there gives
\begin{equation}\label{eq:charged-surface-gravity}
 \mathrm d\bigl(\g_q(\K,\K)\bigr)
  =\frac{D_q}{4}\mathrm d\bigl(\g(\K,\K)\bigr)
  =-2\kappa\,\g_q(\K,\cdot).
\end{equation}
Applying the same calculation on the past branch, we obtain
from \eqref{eq:charged-horizon-symmetries}--\eqref{eq:charged-surface-gravity}
two nondegenerate Killing horizons for $\K$, with the original
surface gravity. To use their vanishing null second fundamental
forms at $\mathcal S_0$, observe that the invertible linear change
$v\mapsto v+D_q^{-1}W_q(v)\T$ preserves $T\mathcal S_0$
because $\T$ is tangent to the sphere.
Thus the induced metric remains positive definite. Writing
$\D^{(q)}$ for the connection of $\g_q$ and $L_\pm$ for the
two nonzero null normals, we obtain, for $Y,V$ tangent to
$\mathcal S_0$,
\begin{equation}\label{eq:charged-sphere-second-fundamental}
 \g_q(\D^{(q)}_YV,L_\pm)
   =-\g_q(V,\D^{(q)}_YL_\pm)=-\chi^\pm(Y,V)=0.
\end{equation}
Since $L_+,L_-$ span the normal plane,
\eqref{eq:charged-sphere-second-fundamental} proves that
$\D^{(q)}_YV$ is tangent to $\mathcal S_0$, and hence that
$\mathcal S_0$ is totally geodesic for $\g_q$. The circle action
used in the construction is unchanged and retains period $2\pi$.

The orders in \eqref{eq:collar-orders} follow from the boost
action of $\K$ on the normal plane and total geodesy of
$\mathcal S_0$. We compute the first terms using Jacobi fields.
Choose a smooth circle-invariant $\g$-orthonormal normal frame
$(e_0,e_1)$ along $\mathcal S_0$, with $e_0$ future and $e_1$
outward. Such a frame follows by averaging a future timelike
normal over the circle and normalizing. By \eqref{eq:Wbar},
$W_0$ is tangential at $\mathcal S_0$. Thus, for an original normal
vector $N$,
\begin{equation}\label{eq:charged-normal-frame}
 \begin{gathered}
 W_q(N)=\T^\flat(N)=0,\qquad
 \g_q(N,\cdot)=\frac{D_q}{4}\g(N,\cdot),\\
 e_{a,q}=2D_q^{-1/2}e_a\quad(a=0,1).
 \end{gathered}
\end{equation}
Use $\exp_y^{\g_q}(\rho e_{1,q})$ for the zero-time slice and
then flow by $\K$. Write $J_A$ for the fields obtained by varying
the base point.
For $v=\partial_\rho$, the radial geodesic equation and
$[v,J_A]=0$, with initial orthogonality
\eqref{eq:charged-normal-frame}, give
\begin{equation}\label{eq:collar-radial-orthogonality}
 \begin{gathered}
 \partial_\rho\g_q(v,\K)=\g_q(v,\D^{(q)}_v\K)=0,\qquad
 \partial_\rho\g_q(v,J_A)=\tfrac12J_A\g_q(v,v)=0,\\
 \g_{q\,\rho t}=\g_{q\,\rho A}=0,\qquad \g_{q\,\rho\rho}=1.
 \end{gathered}
\end{equation}
Set $\zeta_A=\g_q(e_{0,q},\D^{(q)}_{e_A}e_{1,q})$ on $\mathcal S_0$.
Total geodesy in \eqref{eq:charged-sphere-second-fundamental}
makes $\D^{(q)}_{e_A}e_{1,q}$ normal to $\mathcal S_0$.
The derivative of $\K$ there is independent of the connection,
since $\K=0$, and its normal boost has the coefficient
\eqref{eq:surface-gravity-lapse}. Thus the Jacobi data are
\begin{equation}\label{eq:collar-jacobi-data}
 \begin{gathered}
 \K(0)=0,\qquad \D^{(q)}_\rho\K(0)=\kappa e_{0,q},\\
 J_A(0)=e_A,\qquad
 \D^{(q)}_\rho J_A(0)=-\zeta_Ae_{0,q}.
 \end{gathered}
\end{equation}
Writing $\R_q$ for the curvature of $\g_q$, the curvature
commutator, $[v,J]=0$, and $\D^{(q)}_vv=0$ give, for any $Y$,
\begin{equation}\label{eq:collar-jacobi-equation}
 \begin{gathered}
 \g_q(\D^{(q)}_v\D^{(q)}_vJ,Y)=\R_q(Y,v,v,J),
 \qquad J=\K,J_A,\\
 \g_q(\D^{(q)}_v\D^{(q)}_v\K(0),Y)
       =\R_q(Y,e_{1,q},e_{1,q},\K(0))=0.
 \end{gathered}
\end{equation}
With the initial data \eqref{eq:collar-jacobi-data}, Taylor
expansion in a parallel frame therefore yields
\begin{equation}\label{eq:collar-jacobi-expansions}
 \K=\kappa\rho e_{0,q}+O(\rho^3),\qquad
 J_A=e_A-\rho\zeta_Ae_{0,q}+O(\rho^2).
\end{equation}
Taking inner products in \eqref{eq:collar-jacobi-expansions}
and completing the square yields
\begin{equation}\label{eq:collar-metric-expansions}
 \begin{aligned}
 \g_q(\K,\K)&=-\kappa^2\rho^2+O(\rho^4),&
 \g_q(\K,J_A)&=\kappa\zeta_A\rho^2+O(\rho^3),\\
 (h_\rho)_{AB}&=(h_0)_{AB}+O(\rho^2),&
 \beta^A&=(h_\rho)^{AB}\g_q(\K,J_B),\\
 a^2&=-\g_q(\K,\K)+|\beta|_{h_\rho}^2.
 \end{aligned}
\end{equation}
This proves \eqref{eq:collar-orders}. To obtain the asserted
uniformity, use the smooth dependence of $\g_q$ and the initial
frame \eqref{eq:charged-normal-frame} on $(q,y)$ in the
geodesic equation. Compactness of $[0,1]\times\mathcal S_0$
gives a common normal neighbourhood, on which differentiation
of \eqref{eq:collar-jacobi-equation} bounds every fixed finite
number of $(q,\rho,y)$ derivatives of each remainder divided
by its stated power of $\rho$.

Equivariance of the normal exponential map under the $\K$ flow
gives \eqref{eq:collar-boost-coordinates}, with
$\partial_{x^a}=e_{a,q}$ on $\mathcal S_0$ for $a=0,1$.
Circle invariance of the initial frame gives $\Z(\rho)=0$,
so $\T(\rho)=\K(\rho)-\Omega_H\Z(\rho)=0$.
To extend the coordinates to all Killing times, we prove that
the zero-time collar is a section of the stationary flow,
first for $q=1$.
The original slice is $x^0=x^1u(x^1,y)$, with $|u(0,y)|<1$.
Thus $\rho=x^1\sqrt{1-u^2}$ is a smooth defining function
there, and \eqref{eq:collar-boost-coordinates} writes the slice
as $t=v_0(\rho,y)$, smooth up to $\rho=0$.
The conormal of the original spacelike slice gives
\begin{equation}\label{eq:collar-orbit-transversality}
 1+\Omega_H\Z v_0
 =\mathrm d(t-v_0)(\T)
 =n\sqrt{-\g^{-1}(\mathrm d(t-v_0),\mathrm d(t-v_0))}>0.
\end{equation}
The coefficient bounds \eqref{eq:collar-orders} give
\begin{equation}\label{eq:collar-graph-conormal}
 -\g^{-1}(\mathrm d(t-v_0),\mathrm d(t-v_0))
 =\frac{(1+\beta(v_0))^2}{a^2}-|\nabla v_0|^2
 =\frac1{\kappa^2\rho^2}+O(1).
\end{equation}
Substituting \eqref{eq:collar-graph-conormal} into
\eqref{eq:collar-orbit-transversality} gives, on the original slice,
\[
 (1+\Omega_H\Z v_0)|_{\rho=0}
   =\kappa^{-1}(\partial_\rho n)|_{\rho=0}>0
\]
by \eqref{eq:lapse-boundary}.
The projection of this part of $\Sigma_1$ onto the zero-time
slice is
\[
 (\rho,y)\longmapsto
    (\rho,\Psi_{\Omega_Hv_0(\rho,y),\Z}y).
\]
Its angular differential satisfies
\begin{equation}\label{eq:collar-projection-differential}
 \begin{gathered}
 \mathrm d_y\{\Psi_{\Omega_Hv_0(\rho,y),\Z}y\}
 =\mathrm d\Psi_{\Omega_Hv_0,\Z}\circ
       (\operatorname{Id}+\Omega_H\Z\otimes\mathrm d_yv_0),\\
 \det(\operatorname{Id}+\Omega_H\Z\otimes\mathrm d_yv_0)
       =1+\Omega_H\Z v_0>0.
 \end{gathered}
\end{equation}
For fixed $\rho\ge0$ this is a local diffeomorphism of the sphere
of degree one, since replacing $v_0$ by $sv_0$, $0\le s\le1$,
gives a homotopy to the identity. It is therefore a diffeomorphism.
Since $v_0$ is bounded on a compact collar, we can shorten the
collar so that its stationary segments to the zero-time slice
lie in one circle-invariant regular normal neighbourhood. Each
complete stationary orbit meets $\Sigma_1$ once, so the same
is true of the zero-time slice, and the map
\[
 \mathbb R\times\{t=0,\ 0<\rho<\rho_0\}\longrightarrow\DOC,
 \qquad (\tau,p)\longmapsto\Psi_{\tau,\T}(p)
\]
is injective and supplies the required stationary product.
We extend $\Z$ by stationary translation on this product,
preserving its Killing equation, its commutation with $\T$,
and its period. The commuting complete flows of $\T$ and
$\Z$ then make $\K=\T+\Omega_H\Z$ complete there as well.

For the charged metrics the Gaussian zero-time slice may be
different. In the original regular normal chart, the initial
normal vectors in \eqref{eq:charged-normal-frame} give
\begin{equation}\label{eq:charged-geodesic-expansions}
 x^0=O(\rho_q^2),\qquad
 x^1=2D_q^{-1/2}\rho_q+O(\rho_q^2),\qquad
 Y=y+O(\rho_q^2).
\end{equation}
Substituting \eqref{eq:charged-geodesic-expansions} into
\eqref{eq:collar-boost-coordinates} gives
\begin{equation}\label{eq:charged-boost-expansions}
 \begin{gathered}
 r=\sqrt{(x^1)^2-(x^0)^2}
       =2D_q^{-1/2}\rho_q+O(\rho_q^2),\\
 t=\kappa^{-1}\operatorname{arctanh}(x^0/x^1)=O(\rho_q).
 \end{gathered}
\end{equation}
Smooth dependence of the geodesic equation on $(q,y)$ makes
the remainders in \eqref{eq:charged-geodesic-expansions} and
\eqref{eq:charged-boost-expansions}, divided by their stated
powers of $\rho_q$, smooth with bounded angular derivatives
on the common collar, uniformly for $q\in[0,1]$.
The positive radial derivative in \eqref{eq:charged-boost-expansions}
and the angular expansion in \eqref{eq:charged-geodesic-expansions}
allow us to invert $(\rho_q,y)\mapsto(r,Y)$. Thus the charged
zero-time slice is $t=b_q(r,Y)$, with $b_q=O(r)$ smoothly. Its stationary
projection is
\begin{equation}\label{eq:charged-stationary-projection}
 (r,Y)\longmapsto(r,\Psi_{\Omega_Hb_q(r,Y),\Z}Y),\qquad
 1+\Omega_H\Z b_q=1+O(r)>0.
\end{equation}
Applying \eqref{eq:collar-projection-differential} with $b_q$
in place of $v_0$ proves that this projection is a diffeomorphism.
The uniform bound $1+O(r)$ gives a common smaller collar on
which every charged zero-time slice is a section of the
stationary product. We may therefore flow each slice
for all times by
$\Psi_{t,\K}=\Psi_{t,\T}\circ\Psi_{\Omega_Ht,\Z}$, obtaining
\eqref{eq:ADM-collar} for $t\in\mathbb R$ and completing the
proof of Proposition~\ref{prop:charging}.

\section{Compactness of the maximal hypersurfaces}
\label{app:maximal-compactness}

We use Bartnik's gradient estimate~\cite{Bartnik} relative to a
smooth function $\tau$ with past timelike gradient, called a
temporal function. The following lemmas give the normal estimate
and a fixed temporal function. We then apply them to
Lemma~\ref{lem:maximal-compactness} before proving them.

\begin{lemma}\label{lem:temporal-gradient}
Let $\g_j$ be smooth Lorentz metrics converging in $C^2$ to a
Lorentz metric near a fixed compact set $Q$.
Let $\tau$ be a fixed smooth function with
past timelike gradient and
\[
 \alpha_j^2=-\g_j^{-1}(\mathrm d\tau,\mathrm d\tau)\ge a_0^2>0
       \quad\hbox{on }Q\cap\{\tau\ge0\}.
\]
Suppose $\Sigma_j$ are smooth spacelike maximal hypersurfaces
with future unit normals $T_{0,j}$, whose portions $\Sigma_j\cap\{\tau\ge0\}$ are compact and
contained in $Q$. Put
\[
 V_j=-\alpha_j^{-1}\operatorname{grad}_{\g_j}\tau,
 \qquad v_j=-\g_j(T_{0,j},V_j),
 \qquad \tau_{\max}=\max_Q\max(\tau,0).
\]
There is an exponent $\mu>4$, depending only on $a_0$,
$\sup_j(\|\g_j\|_{C^2(Q)}+\|\g_j^{-1}\|_{C^0(Q)})$, and
$\|\tau\|_{C^3(Q)}$ in a fixed finite coordinate cover, such that
\begin{equation}\label{eq:temporal-gradient}
 \sup_{\Sigma_j\cap\{\tau>0\}}\tau^\mu v_j
 \le2\max\left\{2\tau_{\max}^\mu,
       \sup_{\partial\Sigma_j\cap\{\tau>0\}}\tau^\mu v_j\right\}.
\end{equation}
The boundary supremum is zero when its set is empty.
\end{lemma}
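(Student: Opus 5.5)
This is Bartnik's interior gradient estimate \cite{Bartnik}, and the plan is to reproduce his maximum-principle argument with the constants tracked. Work on a fixed $\Sigma=\Sigma_j$ and suppress $j$. On $\Sigma$ the restriction of $\tau$ satisfies $|\nabla\tau|_h^2=\alpha^2(v^2-1)$, so $v$ measures the tilt of $\Sigma$ relative to the level sets of $\tau$. The test function is
\[
 w=\tau^\mu v\quad\text{on }\Sigma\cap\{\tau>0\}.
\]
It vanishes where $\tau=0$. By compactness of $\Sigma\cap\{\tau\ge0\}$, it attains its maximum either on $\partial\Sigma\cap\{\tau>0\}$, which is covered by the boundary term in \eqref{eq:temporal-gradient}, or at an interior point $x_0$. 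At $x_0$ we have $\nabla\log w=0$ and $\Delta_\Sigma\log w\le0$. The goal is to show that these two conditions force $v(x_0)\le C\tau(x_0)^{-\mu}\max(1,\tau_{\max}^{2\mu})$, with $C$ depending only on the stated quantities.

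\textbf{Identity for $v$.} Write $V=-\alpha^{-1}\operatorname{grad}\tau$ and decompose $V=vT_0+V^{\top}$ along $\Sigma$. Maximality, together with the Codazzi equation $\nabla^ik_{ij}=\R(T_0,e_j)$, gives a Jacobi-type identity
\[
 \Delta_\Sigma v=v|k|^2+E,\qquad |E|\le C\bigl(v^2+v|k|\bigr)+Cv\,|\R(T_0,\cdot)|.
\]
The curvature term is bounded using $\|\g\|_{C^2}$. The error $E$ comes from $\D V$, which is controlled by $\|\tau\|_{C^3}$, $a_0$, and $\|\g^{-1}\|_{C^0}$. The remaining first-order cross terms can be arranged as $2k(\nabla v,\cdot)$-type terms, which are absorbed using Kato's inequality $|\nabla v|^2\le C v^2|k|^2+Cv^4$. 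The Kato bound follows from $\nabla_i v=-k_{ij}V^{\top j}+O(v^2)$ and $|V^{\top}|^2=v^2-1$.

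\textbf{Identity for $\tau$.} Compute $\Delta_\Sigma\tau=\tr_h(\D^2\tau)$, using $\tr k=0$. This gives the two-sided bound
\[
 |\Delta_\Sigma\tau|\le C(1+|\nabla\tau|^2)\le Cv^2 .
\]

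\textbf{Combination at $x_0$.} At the interior maximum,
\[
 0\ge\Delta_\Sigma\log v+\mu\frac{\Delta_\Sigma\tau}{\tau}-\mu\frac{|\nabla\tau|^2}{\tau^2}.
\]
The condition $\nabla\log w=0$ gives $\nabla v/v=-\mu\nabla\tau/\tau$, and this eliminates $|\nabla v|^2/v^2$. The main lower bound is supplied by $|k|^2$. The key inequality is
\[
 |k|^2\ge\frac{|\nabla v|^2}{v^2-1}-Cv^2=\mu^2\frac{|\nabla\tau|^2 v^2}{\tau^2(v^2-1)}-Cv^2 ,
\]
which is obtained from $|\nabla v|\le|k||V^\top|+Cv^2$. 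Inserting it, the term $\mu^2|\nabla\tau|^2/\tau^2$ dominates the $-\mu|\nabla\tau|^2/\tau^2$ term once $\mu>4$ is large, in analogy with Bartnik's choice. Since $|\nabla\tau|^2=\alpha^2(v^2-1)\ge a_0^2(v^2-1)$, this leaves
\[
 \frac{c\mu^2a_0^2v^2}{\tau^2}\le C\mu\frac{v^2}{\tau}+Cv^2 ,
\]
which gives a contradiction unless $v$ is bounded in terms of $\tau_{\max}$. Tracking the powers should then yield $\tau^\mu v\le 4\tau_{\max}^\mu$ at $x_0$, which is the factor $2\cdot 2\tau_{\max}^\mu$ in \eqref{eq:temporal-gradient}.

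\textbf{Main obstacle.} The hardest step is the precise algebra in this combination: the Kato-type lower bound on $|k|^2$ has to beat both the $\mu|\nabla\tau|^2/\tau^2$ term and the $O(v^2)$ errors uniformly, and this is exactly where the threshold $\mu>4$ and the dependence on $a_0$ enter. I would follow Bartnik's computation \cite{Bartnik} line by line. His version uses an auxiliary function of the form $\tau^\mu v\,e^{K\tau}$ if needed to absorb linear error terms; afterwards the factor $e^{K\tau}$ is removed using $\tau\le\tau_{\max}$, with the resulting constant absorbed into the factor $2$. Uniformity in $j$ holds because every constant depends only on the $C^2$ norms of $\g_j$ and the $C^0$ norm of $\g_j^{-1}$ on $Q$, which stay bounded by the assumed $C^2$ convergence.
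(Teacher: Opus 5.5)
Your outline is the right framework: a maximum principle for $\tau^\mu v$, Codazzi for $\nabla^ik_{ij}$, and $|\Delta_\Sigma\tau|\le Cv^2$ from $\tr k=0$. The combination step, however, fails as written.

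\textbf{Why the combination fails.} At an interior maximum, $\Delta_\Sigma\log v=\Delta_\Sigma v/v-|\nabla v|^2/v^2$ and $\nabla v/v=-\mu\nabla\tau/\tau$. So $\Delta_\Sigma\log v$ contains $-\mu^2|\nabla\tau|^2/\tau^2$, which is exactly the size of the gain you extract from $|k|^2$. With your bound $|k|^2\ge|\nabla v|^2/(v^2-1)-Cv^2$, the net leading term is
$\mu^2|\nabla\tau|^2/(\tau^2(v^2-1))=\mu^2\alpha^2/\tau^2$.
This stays bounded as $v\to\infty$, so it cannot beat $-\mu|\nabla\tau|^2/\tau^2=-\mu\alpha^2(v^2-1)/\tau^2$. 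It is even worse than that. Squaring $|\nabla v|\le|k||V^\top|+Cv^2$ honestly requires absorbing the cross term $|k|v^3$. That puts a factor $1+\eps$ on $|k|^2(v^2-1)$ and makes the leading coefficient negative for large $v$. Your claim that $\mu^2|\nabla\tau|^2/\tau^2$ dominates therefore double counts, and nothing in your scheme produces the threshold $\mu>4$.

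Your error term $E$ is also underestimated. The tangent frame of a strongly tilted slice has ambient components of size $v$, so the errors are $C|k|v^2+Cv^3$. This is harmless after absorption, but only if some positive multiple of $|k|^2$ is left over.

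\textbf{The missing idea.} You need the refined Kato inequality supplied by maximality. Since $\tr k=0$ in dimension three, the eigenvalue $\lambda_1$ of largest modulus satisfies $|k|^2\ge\lambda_1^2+\tfrac12(\lambda_2+\lambda_3)^2=\tfrac32\lambda_1^2$. Also $|k(V^\top,\cdot)|\le|\lambda_1||V^\top|$. Hence
$|\nabla v|^2/(v^2-1)\le\tfrac23(1+\eps)|k|^2+C_\eps v^2$.
Combine this with $\Delta_\Sigma v\ge(1-\eps)|k|^2v-C_\eps v^3$ and take $\eps\le1/11$. For $\eta=\operatorname{arcosh}v$ (or equally $\log v$) on $\{v>2\}$, this gives the coercive inequality
$\Delta_\Sigma\eta\ge\coth\eta\,\{\tfrac14|\nabla_\Sigma\eta|^2-Cv^2\}$.

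At an interior maximum of $\eta+\mu\log\tau$, the leading coefficient becomes $\mu^2/4-\mu$; this is where $\mu>4$ comes from. Choosing $(\mu^2/4-\mu)a_0^2\ge2C(\tau_{\max}^2+\mu\tau_{\max})$ then forces $v^2\le2$, contradicting $v>2$. No $e^{K\tau}$ factor is needed. The factors in \eqref{eq:temporal-gradient} come from maxima with $v\le2$, together with $v\le e^{\eta}\le2v$.
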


\begin{lemma}\label{prop:compact-causal}
Let $\g$ be a $C^2$ time-oriented Lorentz metric on
$\mathcal B$. Suppose that every compact $C\subset\mathcal B$
admits a constant $L_C$ such that
\begin{equation}\label{eq:compact-proper-time}
 L_\g(c)\le L_C
\end{equation}
for every Lipschitz future curve $c:[a,b]\to C$ with
$\g(\dot c,\dot c)\le-\epsilon_c<0$ almost everywhere.
Then every compact $Q\Subset\mathcal B$ has a neighbourhood
admitting a smooth function with past timelike gradient.
\end{lemma}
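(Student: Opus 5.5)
The plan is to build a time function from the bounded-proper-time hypothesis, in the spirit of the Geroch time function, and then smooth it near $Q$. For $x\in\mathcal B$ and a fixed compact neighbourhood $C$ of $Q$, I will consider the time-separation-type quantity
\[
 \ell_C(x)=\sup\{L_\g(c): c \hbox{ a future timelike curve in } C \hbox{ ending at } x\}.
\]
By \eqref{eq:compact-proper-time}, $\ell_C$ is bounded by $L_C$. The bound must be uniform over curves with arbitrarily small $\epsilon_c$, so I would first check that any timelike curve in $C$ can be reparametrized to have $\g(\dot c,\dot c)\le-\epsilon$ almost everywhere with $\epsilon$ depending only on $C$. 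If that fails, I would replace $\epsilon$-timelike curves by curves that are timelike for a narrower cone metric $\g_\epsilon$, whose causal cones lie strictly inside those of $\g$.

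Next, I would show that $\ell_C$ is strictly increasing along future timelike curves inside the interior of $C$. Concatenation gives
\[
 \ell_C(c(t_2))\ge \ell_C(c(t_1))+L_\g\bigl(c|_{[t_1,t_2]}\bigr).
\]
To get continuity or regularity, I would use a narrower-cone metric $\g'\le\g$ and define the corresponding $\ell'$. Standard arguments then give lower semicontinuity, and the local Lipschitz property follows from comparison with normal-coordinate cones, using the $C^2$ metric.

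The key step is to obtain a \emph{uniform} increase
\[
 \ell(c(t_2))-\ell(c(t_1))\ge c_0\,\bigl(\hbox{coordinate time elapsed}\bigr)
\]
along $\g$-causal directions in a neighbourhood of $Q$. This gives a Lipschitz function whose Clarke gradient lies in the open past cone, with a margin. Mollification in a finite coordinate cover, combined with a partition of unity, then yields a smooth function with timelike gradient near $Q$. This works because convex combinations of covectors in a uniform compact subset of the open past cone stay in that cone, and the mollification error is small in $C^1$ once the cone margin exceeds the metric variation.

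The main obstacle I expect is controlling boundary effects. Curves are restricted to lie in $C$, so near $\partial C$ the supremum may not increase uniformly, and the sup may jump. I would handle this by taking $C$ much larger than $Q$, working only on a neighbourhood $Q'\Subset C^\circ$, and using the fact that short timelike segments through points of $Q'$ remain in $C$. A remaining subtle point is that maximizing sequences could leave and re-enter regions, producing discontinuities. If semicontinuity fails, I would instead average $\ell$ over a small family of compact sets $C_s$, $s\in[0,1]$, in the way Geroch averages volumes, to recover continuity.
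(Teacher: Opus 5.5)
There is a genuine gap at the step you yourself call key: the uniform increase of $\ell_C$ along $\g$-causal directions near $Q$. Continuity, the Lipschitz bound and the timelike Clarke gradient all rest on it. You use the hypothesis only to get $\ell_C\le L_C$, and boundedness plus strict increase along \emph{timelike} curves gives no control in null directions, which is exactly where a temporal function can fail. For your $\ell_C$ the claim is in fact false in general. Use coordinates $(t,x)$ and take $\mathcal B=\mathbb R^{1,1}\setminus(\{0\}\times[-1,1])$ with the flat metric. It satisfies the hypothesis, since $L_\g(c)\le\operatorname{osc}_C t$. Let $Q=\{x=0,\ \tfrac12\le t\le3\}$, and let $C$ be a large coordinate square minus a thin open rectangle $N$ around the hole. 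Points of $Q$ short of the null lines from the upper corners of $N$ can be reached inside $C$ only by causal curves lying above $N$, so there $\ell_C$ is at most their height above $N$. Just beyond those lines, curves can run up beside $N$ from the bottom of the square and round a corner, so $\ell_C$ is as large as the depth of the square. Thus $\ell_C$ jumps across these null lines. On the far side the near-maximizers reach the point almost null, so the increase along one null direction degenerates and the Lipschitz constant blows up. Enlarging $C$ does not help, since every compact $C\subset\mathcal B$ must omit a neighbourhood of the hole, and averaging over sets $C_s$ is not an argument. The narrower-cone device also points the wrong way. Mollification needs a function that still increases along the cones of a slightly \emph{wider} metric, i.e.\ stable-causality-type information near $Q$. (Your reparametrization worry is moot: $L_C$ does not depend on $\epsilon_c$, and piecewise $C^1$ timelike curves on compact intervals are automatically uniformly timelike.)

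The missing idea is a way to turn a bound on timelike curves into control of causal curves, and the paper does this first. Suppose a future-inextendible causal curve $\gamma$ were confined to a compact set. Parametrize it so that $\g(\dot\gamma,V)=-1$ for a unit timelike field $V$, set $b=\g(\dot\gamma,\D_{\dot\gamma}V)$, and push $\gamma$ along $V$ by $u(s)V$, where $u'=bu+c_0u^2$ is solved explicitly through $\Lambda'=\exp\int b$. For large $c_0$ one gets $|u|\le K/c_0$, so the displaced curve $\widetilde\gamma$ stays in a fixed compact neighbourhood. The quadratic term gives $\g(\dot{\widetilde\gamma},\dot{\widetilde\gamma})\le-c_0u^2$, and $\int_0^\infty|u|\,\mathrm ds=\infty$ gives infinite proper time, contradicting the hypothesis. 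With imprisoned causal curves excluded, Beem's theorem makes an open $\mathcal O$ with $Q\Subset\mathcal O\Subset\mathcal B$ stably causal, and a smooth temporal function then exists by the standard results. To keep your construction, you would still need an argument of this kind to obtain the uniform increase, and at that point citing the temporal-function theorem is shorter.
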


\begin{proof}[Proof of Lemma~\ref{lem:maximal-compactness}]
Write $\g_j=\g_{q(s_j)}$ and $n_j=-\g_j(\T,T_{0,j})$.
The estimate \eqref{eq:outer-collar-normal} bounds the normals
on a fixed outer collar.

At a fixed small $\rho_0>0$, smooth parameter dependence in
Lemma~\ref{lem:harrison-collar} and
\eqref{eq:charged-boost-expansions} place the zero-time collar
spheres and their $\Z$-orbits in a common compact subset of
$\DOC$, with stationary projections in one compact interior
annulus. Consequently \eqref{eq:interior-height} bounds $G_j$
on this set, and \eqref{eq:corotating-graph} bounds the corotating
heights $v_j(\rho_0,\cdot)$ on both sides. Applying
\eqref{eq:collar-height} to each fixed hypersurface gives this
uniform bound throughout the collar and hence \eqref{eq:wedge}.

In the regular normal coordinates of
Lemma~\ref{lem:harrison-collar}, the coefficients converge smoothly
and $-\g_j(\partial_{x^0},\partial_{x^0})\ge1/2$ on a common
smaller neighbourhood. The corotating graphs $t=v_j(\rho,y)$
satisfy $|x^0|\le\theta x^1$ by \eqref{eq:wedge}, while
\eqref{eq:collar-boost-coordinates} gives $x^1\ge\rho$.
Along each angular ray, the derivative of $x^1$ cannot vanish,
since the tangent would then be
a nonzero multiple of the timelike field $\partial_{x^0}$,
and it is positive because $x^1\to0$ as $\rho\downarrow0$.
Thus the hypersurfaces are graphs $x^0=w_j(z)$, $z=(x^1,y)$,
on a common half-cylinder. The inequality
$|w_j|\le\theta x^1$ gives at $x^1=0$
\begin{equation}\label{eq:horizon-boundary-normal}
 \begin{gathered}
 w_j=0,\qquad \partial_yw_j=0,\qquad
 |\partial_{x^1}w_j|\le\theta<1,\\
 -\g_j(T_{0,j},\partial_{x^0})
       \le(1-\theta^2)^{-1/2}.
 \end{gathered}
\end{equation}

To extend the normal bound from $\mathcal S_0$ into the collar, choose
$\theta<c<1$ and angular coordinates orthonormal at a fixed
point of $\mathcal S_0$. On a sufficiently small coordinate
half-ball $|z|<r_0$ put
\[
 \delta=\frac{(c-\theta)r_0}{4},\qquad
 \epsilon=\frac\delta{4c},\qquad
 \tau=\delta+x^0-c\sqrt{\epsilon^2+|z|^2}.
\]
For the constant Minkowski form,
$(\partial_{x^0}\tau)^2-|\partial_z\tau|^2\ge1-c^2$.
Shrinking the chart and using smooth metric convergence, together
with $|w_j|\le\theta|z|$ on the hypersurfaces, gives
\begin{equation}\label{eq:horizon-temporal-cutoff}
 \begin{gathered}
 -\g_j^{-1}(\mathrm d\tau,\mathrm d\tau)\ge(1-c^2)/2,\\
 \tau\le-\tfrac34(c-\theta)r_0\quad\hbox{when }|z|=r_0,\\
 \tau\ge\delta/2\quad\hbox{when }
       |z|\le\delta/[4(c+\theta)].
 \end{gathered}
\end{equation}
The parameters $r_0,\epsilon$ fix the $C^3$ bound for $\tau$.
With $V_j,\alpha_j$ as in Lemma~\ref{lem:temporal-gradient},
$\partial_{x^0}\tau=1$ gives
\[
 -\g_j\left(V_j,
       \frac{\partial_{x^0}}{\sqrt{-\g_{j,00}}}\right)
       =\frac1{\alpha_j\sqrt{-\g_{j,00}}}\le C.
\]
Here \eqref{eq:horizon-temporal-cutoff} bounds $\alpha_j$ below,
and $-\g_{j,00}\ge1/2$. The same cutoff excludes $|z|=r_0$
from $\{\tau>0\}$, while this angle bound and
\eqref{eq:horizon-boundary-normal} control the boundary term
in Lemma~\ref{lem:temporal-gradient}. The lemma bounds the
normals where $\tau\ge\delta/2$, and a finite cover of
$\mathcal S_0$ gives a fixed collar with bounded normals.

A normal bound gives ellipticity of the graph equation in
both boundary and interior charts. For a local graph $x^0=w(z)$ with timelike
vertical field, set
\[
 e_a=\partial_{z^a}+p_a\partial_{x^0},\qquad
 h_{ab}=\g_j(e_a,e_b),\qquad
 L_j(z,w,p)=\sqrt{\det h}.
\]
At $(w,p)=(w_j,\mathrm dw_j)$, maximality and differentiation
of the volume density give
\begin{equation}\label{eq:maximal-graph-ellipticity}
 \begin{gathered}
 -\partial_aL_{j,p_a}+L_{j,w}=0,\\
 L_{j,p_a}=L_jh^{ab}\g_j(\partial_{x^0},e_b),\\
 \begin{aligned}
 -L_{j,p_ap_b}
 &=L_jh^{ab}\bigl[h^{cd}\g_j(\partial_{x^0},e_c)
                       \g_j(\partial_{x^0},e_d)-\g_{j,00}\bigr]\\
 &=L_jh^{ab}[\g_j(\partial_{x^0},T_{0,j})]^2,
 \end{aligned}\\
 \det h=-\det\g_j/[\g_j(\partial_{x^0},T_{0,j})]^2.
 \end{gathered}
\end{equation}
To obtain uniform ellipticity from
\eqref{eq:maximal-graph-ellipticity}, use the metric bounds and
$\g_{j,00}\le-c<0$ to obtain
\[
 \begin{gathered}
 0<h_{aa}=\g_{j,aa}+2p_a\g_{j,0a}+p_a^2\g_{j,00}
       \le C(1+|p_a|)-cp_a^2,\\
 |p|\le C,\qquad h\le C\operatorname{Id}.
 \end{gathered}
\]
The normal bound, reverse Cauchy--Schwarz inequality, and
nondegeneracy of the ambient metric then give
\[
 \begin{gathered}
 c\le-\g_{j,00}\le[\g_j(\partial_{x^0},T_{0,j})]^2\le C,\\
 c\le\det h=-\frac{\det\g_j}
                  {[\g_j(\partial_{x^0},T_{0,j})]^2}\le C,
 \qquad \lambda_{\min}(h)\ge C^{-2}\det h\ge c.
 \end{gathered}
\]
Substitution in the Hessian formula
\eqref{eq:maximal-graph-ellipticity} yields
\begin{equation}\label{eq:graph-elliptic-bounds}
 \lambda|\xi|^2\le-L_{j,p_ap_b}\xi_a\xi_b\le\Lambda|\xi|^2
\end{equation}
with constants independent of $j$. Smooth convergence of
$\g_j$ gives bounds for every fixed number of derivatives of
$L_j$ on these compact sets of $(z,w,p)$. At the horizon the
Dirichlet data are $w_j=0$ by \eqref{eq:horizon-boundary-normal}.
The bound \eqref{eq:graph-elliptic-bounds}, boundary
$C^{1,\alpha}$ estimates, and Schauder estimates therefore give
uniform $C^{m,\alpha}$ bounds for each $m\ge2$ on smaller
half-balls. At the outer boundary, smooth convergence of
$C_{s_j}$ gives bounded Dirichlet data and bounded boundary
straightening maps, so the same estimates apply there.

We next bound the interior normals. Identify the moving annuli
by smooth stationary diffeomorphisms and write
$\mathcal B=\mathbb R\times\mathcal A^\circ$ for their common
stationary region, with limiting metric $\g_*$.
Fix a Lipschitz future $\g_*$-timelike segment $c$ in a compact
set $Q$, with $\g_*(\dot c,\dot c)\le-\epsilon_c<0$ almost
everywhere. Its bounded tangent and metric convergence keep it
$\g_j$-timelike for large $j$. The reverse Cauchy--Schwarz
inequality, \eqref{eq:translated-graph-time}, and the height bound
\eqref{eq:interior-height} give
\begin{equation}\label{eq:limit-proper-time}
 \begin{aligned}
 L_{\g_*}(c)=\lim_jL_{\g_j}(c)
 &\le C\limsup_j\bigl[G_j(c_+)-G_j(c_-)\bigr]\\
 &\le C\bigl(\operatorname{osc}_Q t+2C_{\pi(Q)}\bigr).
 \end{aligned}
\end{equation}
Here the chain rule for $G_j=t-u_j(x)$ holds almost everywhere.
Thus \eqref{eq:compact-proper-time} holds for $\g_*$. Choose a compact
spatial annulus $K\Subset\mathcal A^\circ$ whose boundary lies
in the collars with bounded normals. Its truncated hypersurfaces
\[
 M_j=\Sigma_j\cap\pi^{-1}(K)
       \subset Q=[-C_K,C_K]\times K
\]
have bounded boundary normals. Lemma~\ref{prop:compact-causal}
supplies a fixed temporal function $\tau$ near $Q$, which we
shift so that $1\le\tau\le\tau_{\max}$. Compactness and convergence
of the inverse metrics give, for large $j$,
\[
 \inf_Q[-\g_j^{-1}(\mathrm d\tau,\mathrm d\tau)]
 \ge\tfrac12\min_Q[-\g_*^{-1}(\mathrm d\tau,\mathrm d\tau)]>0.
\]
This bound, the fixed $C^3$ norm of $\tau$, and smooth convergence
of $\g_j$ control the coefficients in
Lemma~\ref{lem:temporal-gradient}, whose boundary term is
controlled by the collar estimates. Equation~\eqref{eq:temporal-gradient}
therefore bounds the interior normals.

All hypersurfaces now lie in a fixed compact region and have
uniformly bounded normals. In a sufficiently small cylinder
with timelike vertical field, each is a graph over a fixed
spatial ball. To verify that the graph has a uniform domain,
let $L$ bound its coordinate slope and choose the cylinder
$(-4Lr,4Lr)\times B_{4r}$. Starting at a point near its centre,
continuation along radial segments over $B_r$ satisfies
\[
 |w_j(y)|\le|w_j(y_j)|+L|y-y_j|\le2Lr.
\]
Thus the graph cannot reach $x^0=\pm4Lr$ and continues
at any interior endpoint by closedness of the hypersurface
and transversality to $\partial_{x^0}$. It is the only sheet
in a smaller cylinder, since $G_j$ has timelike gradient and
therefore increases strictly along each entire vertical fiber.
We may consequently apply the ellipticity bound
\eqref{eq:graph-elliptic-bounds} on a fixed domain, obtaining
uniform $C^{m,\alpha}$ estimates on smaller balls.

Choose a Hausdorff convergent subsequence in the common compact
region. The graph estimates upgrade its convergence to smooth
convergence near interior limit points, and the collar estimates
give the same conclusion at the boundary. Uniqueness on vertical
fibers prevents crossing sheets or coincident sheets with
multiplicity, so these local limits agree to form a smooth
compact connected spacelike maximal hypersurface $\Sigma_*$,
with $\partial\Sigma_*=\mathcal S_0\sqcup C_{s_*}$.

It remains to show that $\Sigma_*^\circ$ meets each stationary
orbit in $\mathcal B$ exactly once.
Passing \eqref{eq:charged-lapse-equation} to the smooth limit gives
\[
 n_*\ge0,\qquad
 \Delta_{\Sigma_*}n_*
   =\bigl(|k_*|^2+\R_*(T_{0,*},T_{0,*})\bigr)n_*.
\]
The nonnegative coefficient in \eqref{eq:charged-constraint}
and the strict outer-collar bound
$n_*\ge\sqrt{-\g_*(\T,\T)}>0$ allow us to apply the strong
maximum principle. We obtain $n_*>0$ on the connected interior,
so the stationary projection is a local diffeomorphism.
It is also bijective. For each fixed $x$, boundedness of
$u_j(x)$ gives a limit point above $x$, while two such points
would, by transversality and smooth local convergence, produce
two points of each late graph above $x$. Hence
$\Sigma_*^\circ=\{t=u_*(x)\}$.

To identify the topology of the limit, straighten
$C_{s_j}$ to $C_{s_*}$ by diffeomorphisms tending to the identity
and fixing $\mathcal S_0$. Smooth convergence up to both
boundary collars then makes normal projection onto $\Sigma_*$
a diffeomorphism for large $j$, proving
$\Sigma_*\simeq\mathbb S^2\times[0,1]$.
\end{proof}

\begin{proof}[Proof of Lemma~\ref{lem:temporal-gradient}]
Suppress $j$. We estimate $v=-\g(T_0,V)$ by differentiating
along $\Sigma$, keeping the terms containing its second
fundamental form explicit. The remaining coefficients are
bounded by the hypotheses, since
$V=-\alpha^{-1}\operatorname{grad}_{\g}\tau$ and $\alpha\ge a_0$
give bounds for $V,\D V,\D^2V$ and $\Ric(\g)$ in the fixed
coordinate cover. Choose a tangent orthonormal frame with
$\nabla_{e_a}e_b=0$ at the point, put $V^a=\g(V,e_a)$, and note that
$v^2=1+\alpha^{-2}|\nabla_\Sigma\tau|^2$.
The Gauss formula $\D_{e_a}e_b=-k_{ab}T_0$ and
$\operatorname{tr}k=0$ give
\begin{equation}\label{eq:tilt-derivatives}
 \begin{aligned}
 \nabla_a v&=k_{ab}V^b-\g(T_0,\D_{e_a}V),\\
 \nabla_aV^b&=\g(\D_{e_a}V,e_b)+vk_{ab},\\
 \Delta_\Sigma v
 &=(\nabla^ak_{ab})V^b
       +k^{ab}\bigl[\g(\D_{e_a}V,e_b)+vk_{ab}\bigr]\\
 &\quad+k^{ab}\g(e_b,\D_{e_a}V)
       -\g\left(T_0,\sum_a(\D^2V)(e_a,e_a)\right)\\
 &=|k|^2v+(\nabla^ak_{ab})V^b
       +2k^{ab}\g(e_b,\D_{e_a}V)\\
 &\qquad-\g\left(T_0,\sum_a(\D^2V)(e_a,e_a)\right).
 \end{aligned}
\end{equation}
In an ambient orthonormal frame with time axis $V$, the
components of $T_0,e_a$ are bounded by $Cv$.
Using Codazzi and the ambient Ricci bound in
\eqref{eq:tilt-derivatives} gives, for every fixed $\varepsilon>0$,
\[
 \Delta_\Sigma v
 \ge |k|^2v-C|k|v^2-Cv^3
 \ge(1-\varepsilon)|k|^2v-C_\varepsilon v^3.
\]
For $\eta=\operatorname{arcosh}v$ on $\{v>2\}$, this yields
\begin{equation}\label{eq:tilt-laplacian}
 \begin{aligned}
 \Delta_\Sigma\eta
 &=\coth\eta\left(\frac{\Delta_\Sigma v}{v}
                           -|\nabla_\Sigma\eta|^2\right)\\
 &\ge\coth\eta\bigl\{
 (1-\varepsilon)|k|^2-|\nabla_\Sigma\eta|^2
                                  -C_\varepsilon v^2\bigr\}.
 \end{aligned}
\end{equation}
To retain a positive gradient term, let $\lambda_1$ be the
eigenvalue of $k$ of greatest absolute value. Maximality gives
\[
 |k|^2\ge\lambda_1^2+\tfrac12(\lambda_2+\lambda_3)^2
          =\tfrac32\lambda_1^2.
\]
The first identity in \eqref{eq:tilt-derivatives} and
$\sum_a(V^a)^2=v^2-1$ give, for fixed $\varepsilon>0$,
\begin{equation}\label{eq:tilt-gradient-control}
 \begin{aligned}
 |\nabla_\Sigma\eta|
 &\le\frac{|k_{ab}V^b|+Cv^2}{\sqrt{v^2-1}}
       \le|\lambda_1|+Cv,\\
 |\nabla_\Sigma\eta|^2
 &\le(1+\varepsilon)\lambda_1^2+C_\varepsilon v^2
       \le\tfrac23(1+\varepsilon)|k|^2+C_\varepsilon v^2.
 \end{aligned}
\end{equation}
Fix $0<\varepsilon\le1/11$. Substituting
\eqref{eq:tilt-gradient-control} in
\eqref{eq:tilt-laplacian} gives
\begin{equation}\label{eq:coercive-tilt}
 \begin{gathered}
 \begin{aligned}
 \Delta_\Sigma\eta
 &\ge\coth\eta\left\{
   \left[\frac{3(1-\varepsilon)}{2(1+\varepsilon)}-1\right]
        |\nabla_\Sigma\eta|^2-C_\varepsilon v^2\right\}\\
 &\ge\coth\eta
       \bigl\{\tfrac14|\nabla_\Sigma\eta|^2-Cv^2\bigr\},
 \end{aligned}\\
 |\Delta_\Sigma\tau|
       =|\operatorname{tr}_{T\Sigma}\D^2\tau|\le Cv^2.
 \end{gathered}
\end{equation}
At an interior maximum of $\eta+\mu\log\tau$,
\[
 \nabla_\Sigma\eta=-\frac\mu\tau\nabla_\Sigma\tau.
\]
Using both bounds in \eqref{eq:coercive-tilt}, we obtain
\begin{equation}\label{eq:weighted-tilt}
 \begin{aligned}
 0&\ge\Delta_\Sigma\eta+\frac\mu\tau\Delta_\Sigma\tau
                  -\frac\mu{\tau^2}|\nabla_\Sigma\tau|^2\\
  &\ge\coth\eta\left\{
    \frac{\mu^2/4-\mu\tanh\eta}{\tau^2}
                     |\nabla_\Sigma\tau|^2
       -Cv^2\left(1+\frac{\mu\tanh\eta}{\tau}\right)\right\}\\
  &\ge\coth\eta\left\{
    \frac{\mu^2/4-\mu}{\tau^2}|\nabla_\Sigma\tau|^2
       -Cv^2\left(1+\frac\mu\tau\right)\right\}.
 \end{aligned}
\end{equation}
The positive coefficient in \eqref{eq:weighted-tilt} is quadratic
in $\mu$, while the error is linear, with $C$ independent of
$\mu$ and the hypersurface. Choose $\mu$ sufficiently large that
\[
 (\mu^2/4-\mu)a_0^2
       \ge2C(\tau_{\max}^2+\mu\tau_{\max}).
\]
Using this choice in \eqref{eq:weighted-tilt}, with
$0<\tau\le\tau_{\max}$, gives
\[
 2a_0^{-2}|\nabla_\Sigma\tau|^2
 \le v^2=1+\alpha^{-2}|\nabla_\Sigma\tau|^2
 \le1+a_0^{-2}|\nabla_\Sigma\tau|^2.
\]
This implies $|\nabla_\Sigma\tau|^2\le a_0^2$ and $v^2\le2$,
contradicting the assumption $v>2$ at the interior maximum.
Since the logarithmic weight tends to $-\infty$ as $\tau\to0$,
we need only compare with the boundary maxima and points where
$v\le2$. The inequalities
$v\le e^{\operatorname{arcosh}v}\le2v$ then give
\eqref{eq:temporal-gradient}.
\end{proof}

\begin{proof}[Proof of Lemma~\ref{prop:compact-causal}]
Timelike displacement of a causal curve, with the displacement
chosen by a scalar differential equation, is used in the proof of
\cite{ChruscielGrant}. Here we need a displacement
that stays uniformly small along an inextendible curve while
producing infinite proper time. We construct it to show that
\eqref{eq:compact-proper-time} excludes a causal curve confined
to a compact set with no future endpoint in $\mathcal B$.
Suppose that $\gamma$ is such a curve.
Choose a $C^2$ future unit timelike field $V$ on a neighbourhood of
its compact closure, and use the Riemannian metric
$\g+2V^\flat\otimes V^\flat$ to measure tangent vectors.
Parametrize $\gamma$ so that its
almost-everywhere tangent $l$ satisfies
\begin{equation}\label{eq:causal-tangent-normalization}
 \g(l,V)=-1,\qquad \g(l,l)\le0,\qquad
 |l|_{\g+2V^\flat\otimes V^\flat}^2\le2 .
\end{equation}
The parameter in \eqref{eq:causal-tangent-normalization}
ranges over $[0,\infty)$, since a finite range would give
finite Riemannian length in a compact set and hence an endpoint,
contrary to inextendibility. Put
\[
 b(s)=\g(l,\D_lV),\qquad |b|\le K,\qquad K\ge1.
\]
The bound follows from $|l|\le\sqrt2$ in
\eqref{eq:causal-tangent-normalization} and the bound for $\D V$
on the compact closure of $\gamma$.
Under a displacement $uV$, the linear change in the squared
norm of the tangent is $2bu-2u'$. The coefficient $b$ has no
sign, so a constant displacement does not suffice. We choose
$u'-bu=c_0u^2$ so that $2bu-2u'=-2c_0u^2$ dominates
the quadratic error. Infinite proper time will then follow from
$\int_0^\infty|u|\,\mathrm ds=\infty$, while a small bound for
$|u|$ keeps the displaced curve in a compact neighbourhood.
Put
\[
 \Lambda'(s)=\exp\left(\int_0^sb(t)\,\mathrm dt\right),
 \qquad \Lambda(s)=\int_0^s\Lambda'(t)\,\mathrm dt.
\]
The equation for the displacement becomes
\[
 \left(\frac{\Lambda'}u\right)'=-c_0\Lambda'.
\]
Thus $\Lambda'/u+c_0\Lambda$ is constant. Choosing its value
according to whether $\Lambda$ is bounded gives, for $c_0$
sufficiently large,
\begin{equation}\label{eq:causal-displacement}
 u(s)=
 \begin{cases}
 -\dfrac{\Lambda'(s)}{c_0(1+\Lambda(s))},
       &\Lambda(s)\longrightarrow\infty,\\[7pt]
 \dfrac{\Lambda'(s)}{c_0(\Lambda_\infty-\Lambda(s))},
       &\Lambda(s)\longrightarrow\Lambda_\infty<\infty .
 \end{cases}
\end{equation}
The two choices in \eqref{eq:causal-displacement} satisfy,
almost everywhere,
\begin{equation}\label{eq:Riccati}
 u'=b u+c_0u^2,\qquad
 |u|\le\frac K{c_0},\qquad |u'|\le2K|u|,
 \qquad \int_0^\infty|u|\,\mathrm ds=\infty .
\end{equation}
Indeed, in the first case
$\Lambda'\le1+K\Lambda\le K(1+\Lambda)$.
In the second case
\[
 \Lambda_\infty-\Lambda(s)
 =\int_s^\infty\Lambda'(t)\,\mathrm dt
 \ge\Lambda'(s)\int_s^\infty e^{-K(t-s)}\,\mathrm dt
 =\frac{\Lambda'(s)}K .
\]
Integrating \eqref{eq:causal-displacement} gives, in the two
cases respectively, $\int_0^s|u(t)|\,\mathrm dt$ equal to
\[
 c_0^{-1}\log(1+\Lambda(s))\qquad\text{and}\qquad
 c_0^{-1}\log[\Lambda_\infty/(\Lambda_\infty-\Lambda(s))].
\]

Choose $u_0>0$ and a compact neighbourhood $C'$ containing
$\Psi_{u,V}(\overline\gamma)$ for $|u|\le u_0$.
Set $\widetilde\gamma(s)=\Psi_{u(s),V}(\gamma(s))$.
For $c_0\ge K/u_0$, the bound $|u|\le K/c_0$ in
\eqref{eq:Riccati} keeps $\widetilde\gamma$ in $C'$.
Its tangent is
\[
 \dot{\widetilde\gamma}
       =\mathrm d\Psi_{u,V}(l+u'V).
\]
The $C^2$ bounds for $\g,V$ on $C'$ bound
$\mathcal L_V\g$ and $\mathcal L_V^2\g$ there.
Using $(\mathcal L_V\g)(l,l)=2b$ and
\eqref{eq:causal-tangent-normalization} gives the Taylor
expansions below, with remainder constants independent of
$s$ and $c_0$,
\begin{equation}\label{eq:flow-metric-expansion}
 \begin{aligned}
 (\Psi_{u,V}^*\g)(l,l)&=\g(l,l)+2b u+O(u^2),\\
 (\Psi_{u,V}^*\g)(l,V)&=-1+O(|u|),\\
 (\Psi_{u,V}^*\g)(V,V)&=-1.
 \end{aligned}
\end{equation}
Combining \eqref{eq:flow-metric-expansion} with the equation
for $u$ in \eqref{eq:Riccati} gives
\begin{equation}\label{eq:displaced-curve-norm}
 \begin{aligned}
 \g(\dot{\widetilde\gamma},\dot{\widetilde\gamma})
 &=\g(l,l)+2b u-2u'-(u')^2+O(u^2+|uu'|)\\
 &\le-2c_0u^2-(u')^2+O(u^2)
 \le-c_0u^2<0 .
 \end{aligned}
\end{equation}
Here the bound $|u'|\le2K|u|$ in \eqref{eq:Riccati} makes
the remainder constant independent of $c_0$. To check that
the timelike displacement preserves the future orientation,
use the mixed term in \eqref{eq:flow-metric-expansion} to obtain
\[
 \g(\dot{\widetilde\gamma},V)
       =-1-u'+O(|u|)<-\tfrac12.
\]
Thus each finite segment of $\widetilde\gamma$ is future
timelike and satisfies the hypotheses of
\eqref{eq:compact-proper-time}. Indeed, its tangent is bounded,
and the positive minimum of $|u|$ on that interval makes
\eqref{eq:displaced-curve-norm} a uniform timelike bound.
But \eqref{eq:displaced-curve-norm} and \eqref{eq:Riccati}
give
\[
 L_\g(\widetilde\gamma|_{[0,S]})
       \ge\sqrt{c_0}\int_0^S|u|\,\mathrm ds
       \longrightarrow\infty .
\]
This is a contradiction. Time reversal excludes
past-inextendible causal curves confined to compact sets.

Having excluded future- and past-inextendible causal curves
in compact sets, we apply Beem's theorem to an open
$\mathcal O$ with $Q\Subset\mathcal O\Subset\mathcal B$.
The theorem makes $\mathcal O$ stably causal, meaning that
its cones can be widened while preserving
causality~\cite{MinguzziCausality}. It therefore admits a
smooth temporal function with past timelike
gradient~\cite{MinguzziCausality}.
\end{proof}

\section*{Acknowledgements}

The author was partially supported by a grant of the Ministry of Research,
Innovation and Digitization, CCCDI - UEFISCDI, project number
ROSUA-2024-0001, within PNCDI IV.

\section*{Generative AI}

ChatGPT 5.6 Sol Ultra was used for literature checks and editorial work including transcribing handwritten notes into LaTeX. The author wrote and verified all mathematical content and takes full responsibility for the manuscript.

\end{document}